\begin{document}


\begin{frontmatter}

\titledata{On the existence of graphs which can colour every regular graph}{}

\authordata{Giuseppe Mazzuoccolo}
{Dipartimento di Informatica\\ Universit\`{a} degli Studi di Verona, Italy}{giuseppe.mazzuoccolo@univr.it}{}{}

\authordata{Gloria Tabarelli}{Dipartimento di Matematica\\ Universit\`{a} di Trento,
Italy}{gloria.tabarelli@unitn.it}{}{}

\authordata{Jean Paul Zerafa}
{St. Edward's College, Triq San Dwardu\\ Birgu (Citt\`{a} Vittoriosa), BRG 9039, Cottonera, Malta;\\
Department of Technology and Entrepreneurship Education\\
University of Malta, Malta; \\
Department of Computer Science, Faculty of Mathematics, Physics and Informatics\\ Comenius University, Mlynsk\'{a} Dolina, 842 48 Bratislava, Slovakia}
{zerafa.jp@gmail.com}
{The author was partially supported by VEGA 1/0743/21, VEGA 1/0727/22, and APVV-19-0308.}

\keywords{Cubic Graph, Petersen Colouring Conjecture, Regular Graph, Multigraph.}
\msc{05C15, 05C70}

\begin{abstract}
Let $H$ and $G$ be graphs. An $H$-colouring of $G$ is a proper edge-colouring $f:E(G)\rightarrow E(H)$ such that for any vertex $u\in V(G)$ there exists a vertex $v\in V(H)$ with $f\left (\partial_Gu\right )=\partial_Hv$, where $\partial_Gu$ and $\partial_Hv$ respectively denote the sets of edges in $G$ and $H$ incident to the vertices $u$ and $v$. If $G$ admits an $H$-colouring we say that $H$ colours $G$. The question whether there exists a graph $H$ that colours every bridgeless cubic graph is addressed directly by the Petersen Colouring Conjecture, which states that the Petersen graph colours every bridgeless cubic graph. In 2012, Mkrtchyan showed that if this conjecture is true, the Petersen graph is the unique connected bridgeless cubic graph $H$ which can colour all bridgeless cubic graphs. In this paper we extend this and show that if we were to remove all degree conditions on $H$, every bridgeless cubic graph $G$ can be coloured substantially only by a unique other graph: the subcubic multigraph $S_{4}$ on four vertices. A few similar results are provided also under weaker assumptions on the graph $G$. In the second part of the paper, we also consider $H$-colourings of regular graphs having degree strictly greater than $3$ and show that: (i) for any $r>3$, there does not exist a connected graph $H$ (possibly containing parallel edges) that colours every $r$-regular multigraph, and (ii) for every $r>1$, there does not exist a connected graph $H$ (possibly containing parallel edges) that colours every $2r$-regular simple graph.
\end{abstract}
\end{frontmatter}

\section{Introduction}\label{section intro}
Graphs considered in this paper are finite, undirected and do not contain any loops. Note that graphs may contain parallel edges, and when we want to emphasise that a graph does or does not admit some parallel edges, we refer to it as a multigraph or a simple graph, respectively. The vertex set and the edge set of a graph $G$ are respectively denoted by $V(G)$ and $E(G)$. Let $U\subseteq V(G)$. The set consisting of all the edges having exactly one endvertex in $U$ is denoted by $\partial_{G}U$, and when it is obvious which graph $G$ we are referring to we just write $\partial U$. When $U$ consists of only one vertex, say $u$, we write $\partial u$, instead of $\partial \{u\}$, for simplicity.
Let $H$ be an arbitrary graph: an \emph{$H$-colouring} of $G$ is a proper edge-colouring $f:E(G)\to E(H)$ of $G$ with edges of $H$, such that for each vertex $u\in V(G)$, there exists a vertex $v \in V(H)$ with $f(\partial_{G}u)=\partial_{H}v$.
If there is no pair of distinct vertices $v$ and $w$ of $H$ such that $\partial_{H}w=\partial_{H}v$, then an $H$-colouring $f$ (of $G$) naturally induces the map $f_V:V(G)\to V(H)$ defined for every vertex $u$ of $V(G)$ as $f_V(u)=v$, where $v$ is the unique vertex of $H$ such that $f(\partial_{G}u)=\partial_{H}v$.
If $G$ admits an $H$-colouring, then we write $H\prec G$ and we say that the graph $H$ colours the graph $G$. Let $P$ denote the well-known Petersen graph. One of the most important conjectures in graph theory is the Petersen Colouring Conjecture by Jaeger. 

\begin{conjecture}[Petersen Colouring Conjecture---Jaeger, 1988 \cite{Jaeger}]\label{conj:petersen}
For any bridgeless cubic graph $G$, $P\prec G$.
\end{conjecture}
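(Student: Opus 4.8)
The Petersen Colouring Conjecture is one of the best-known open problems in the area, so what follows is a \emph{programme} rather than a complete proof; I will describe the natural line of attack and point to the one step that has so far resisted everyone.

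First I would reduce the statement to its hard core. A standard sequence of connectivity reductions shows that it suffices to establish $P\prec G$ for cyclically $4$-edge-connected cubic graphs: if a bridgeless cubic graph $G$ has a $2$-edge-cut, or a $3$-edge-cut that is not the set of edges at a single vertex, one splits $G$ along the cut into strictly smaller bridgeless cubic graphs, $P$-colours each piece inductively, and then reassembles the colourings across the cut --- the reassembly is possible because $P$ is vertex-transitive and cyclically $4$-edge-connected, so the colour pattern crossing the cut can always be matched up. Moreover every $3$-edge-colourable cubic graph is trivially $P$-coloured: fix a vertex $v_{0}$ of $P$ with $\partial_{P}v_{0}=\{e_{1},e_{2},e_{3}\}$ and send colour $i$ to the edge $e_{i}$. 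Hence the task narrows to showing $P\prec G$ for snarks, that is, cyclically $4$-edge-connected cubic graphs of chromatic index $4$.

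Next I would switch to the reformulation in which the conjecture is usually attacked. Call a proper $k$-edge-colouring of a cubic graph \emph{normal} if, for every edge $uv$, the number of distinct colours on the (at most five) edges incident to $u$ or $v$ is exactly $5$ or exactly $3$; a theorem of Jaeger asserts that $P\prec G$ holds precisely when $G$ admits a normal $5$-edge-colouring. So the plan is to build such a colouring for every snark: start from a structure that is known to exist --- a perfect matching $M$ with $G-M$ a disjoint union of cycles, a short cycle cover, or a suitable nowhere-zero flow --- promote it to a proper $5$-edge-colouring, and then recolour locally so as to destroy every ``poor'' edge, i.e.\ every edge whose neighbourhood uses exactly four colours.

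The main obstacle is precisely this last, local-repair step, and it is why the conjecture is still open after more than three decades: repairing one poor edge tends to create another, and no global mechanism is known that forces all of them to disappear. The strongest unconditional results still need more than five colours --- normal $7$-edge-colourings have been constructed for every bridgeless cubic graph, and normal $6$-edge-colourings for several restricted classes --- with no visible route from six colours down to five; and since the Petersen Colouring Conjecture is known to imply both the Berge--Fulkerson and the Fan--Raspaud conjectures, no soft argument can suffice. For this reason the present paper, like Mkrtchyan's 2012 work, does not attempt to prove the conjecture but uses it as a hypothesis, deducing from it the uniqueness of $P$ --- and, once all degree restrictions on $H$ are removed, of the multigraph $S_{4}$ --- among the graphs that colour every bridgeless cubic graph.
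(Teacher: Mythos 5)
The statement you were asked about is Conjecture~\ref{conj:petersen}, the Petersen Colouring Conjecture itself: it is an open problem, and the paper contains no proof of it --- it is stated only as the motivating hypothesis from which Theorem~\ref{theorem Vahan Petersen}, Theorem~\ref{thm:main} and their corollaries draw their significance. You recognise this correctly, and your submission is explicitly a programme rather than a proof, so there is no argument to check against the paper's (nonexistent) one. Your survey of the standard line of attack is essentially accurate: the reduction of a minimal counterexample to the cyclically $4$-edge-connected case, the triviality of the $3$-edge-colourable case via a single vertex of $P$, Jaeger's equivalence between $P\prec G$ and the existence of a normal $5$-edge-colouring, and the state of the art (normal $7$-edge-colourings for all bridgeless cubic graphs, as in the cited work of Mazzuoccolo and Mkrtchyan) are all correctly reported. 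The one place where you overstate what is known is the claim that colourings of the pieces obtained by splitting along a $2$- or $3$-edge-cut ``can always be matched up'' because $P$ is vertex-transitive; the actual reduction arguments in the literature are more delicate than transitivity alone, and as written that sentence reads as if the reduction were routine when it in fact requires care about which colour patterns can appear on a small cut under a Petersen colouring.

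The only substantive caution is therefore editorial rather than mathematical: nothing in your text closes the gap you yourself identify (the local-repair step from normal $6$- or $7$-edge-colourings down to $5$), so this cannot be presented as a proof of the conjecture, and the paper never claims one --- indeed the paper's own footnote records that the much weaker $S_{4}$-Conjecture was only recently settled, while Conjecture~\ref{conj:petersen} remains open.
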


Conjecture \ref{conj:petersen} implies several other relevant conjectures in the field of graph theory such as the Berge--Fulkerson Conjecture \cite{BergeFulkerson} (see also \cite{MazzuoccoloEquivalence}). Weaker conjectures on bridgeless cubic graphs implied by the Berge--Fulkerson Conjecture are the Fan--Raspaud Conjecture \cite{FanRaspaud} (see also \cite{MacajovaSkovieraOddness2}), and the $S_{4}$-Conjecture \cite{MazzuoccoloS4} which states the following\footnote[3]{During the revision process of this paper, Conjecture \ref{conj:s4} was proved to be true by Kardo\v{s}, M\'{a}\v{c}ajov\'{a} and the last author  (see \cite{KardosMacajovaZerafa}).}.

\begin{conjecture}[$S_{4}$-Conjecture---Mazzuoccolo, 2013 \cite{MazzuoccoloS4}]\label{conj:s4}
For every bridgeless cubic graph $G$, there exist two perfect matchings such that the deletion of their union leaves a bipartite subgraph of $G$.
\end{conjecture}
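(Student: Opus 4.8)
The plan is to reduce to a minimal, highly-structured counterexample and then exhibit a reducible configuration. First I would dispose of the easy case: if $G$ is $3$-edge-colourable with colour classes $N_{1},N_{2},N_{3}$, then $N_{1}$ and $N_{2}$ are perfect matchings and $G-(N_{1}\cup N_{2})=N_{3}$ is a perfect matching, hence bipartite; so it suffices to treat non-$3$-edge-colourable (``Class~$2$'') bridgeless cubic graphs. Next I would reduce to the cyclically $4$-edge-connected case by the standard surgery on small edge-cuts: in a bridgeless cubic graph a $2$-edge-cut, respectively a non-trivial $3$-edge-cut, has its edges at distinct vertices on each side, so cutting and adding an edge, respectively a vertex, across the cut produces two strictly smaller bridgeless cubic graphs, and one checks that a pair of perfect matchings leaving a bipartite subgraph in each piece recombines into such a pair for $G$; hence a minimal counterexample is cyclically $4$-edge-connected and Class~$2$.

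The second ingredient is a reformulation that isolates exactly what must be arranged. Fix a perfect matching $M_{1}$ of $G$ (which exists by Petersen's theorem) and let $\mathcal{C}=G-M_{1}$ be the associated $2$-factor. For any perfect matching $M_{2}$ we have $G-(M_{1}\cup M_{2})=\mathcal{C}-\bigl(M_{2}\cap E(\mathcal{C})\bigr)$: deleting at least one edge from each odd cycle of a $2$-factor turns every odd component into a disjoint union of paths, while even cycles remain bipartite, so $G-(M_{1}\cup M_{2})$ is bipartite as soon as $M_{2}$ meets every odd cycle of $\mathcal{C}$. Hence it is enough to find $M_{1}$ and $M_{2}$ such that $M_{2}$ contains an edge of each odd cycle of $G-M_{1}$. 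Writing $A=M_{2}\cap M_{1}\subseteq M_{1}$, this in turn amounts to choosing $A\subseteq M_{1}$ so that, for every cycle $C$ of $\mathcal{C}$, the graph $C-V(A)$ has a perfect matching, and for every \emph{odd} cycle $C$ one has $V(A)\cap V(C)\neq\emptyset$ and $V(A)\cap V(C)\neq V(C)$ (the corresponding $M_{2}$ is then $A$ together with a perfect matching of each $C-V(A)$). For a single cycle this is a mere parity condition on how the endpoints of $A$ are spaced around $C$, and for an odd cycle it is already met when exactly one incident $M_{1}$-edge is placed in $A$. So the crux is to pick $A$ that simultaneously, for all odd cycles, realises an admissible endpoint pattern --- most conveniently, to pair up the odd cycles of $\mathcal{C}$ along $M_{1}$-edges joining two distinct odd cycles, i.e. to find a perfect matching in the auxiliary ``odd-cycle graph'' whose vertices are the odd cycles of $\mathcal{C}$ and whose edges are the $M_{1}$-edges running between two of them.

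With this in hand I would choose the $2$-factor well: among all perfect matchings $M_{1}$, take one for which $G-M_{1}$ has the fewest odd cycles (so $G-M_{1}$ realises the oddness of $G$), and use cyclic $4$-edge-connectivity to argue that in such an extremal $2$-factor an odd cycle cannot be ``isolated'' from the others --- an odd cycle all of whose $M_{1}$-edges lead to even cycles or back to itself should be absorbable by an alternating-path exchange that decreases the number of odd cycles, a contradiction. Once every odd cycle has an $M_{1}$-edge to another odd cycle, and after handling the Tutte-type condition in the odd-cycle graph (again using connectivity to rule out the obstructing vertex sets), one recovers the desired $A$ and hence $M_{2}$.

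The main obstacle is precisely this last coordination step. The number of odd cycles of a $2$-factor can be large --- there are snarks of arbitrarily large oddness --- the odd cycles may lie far apart in $G$, and a single perfect matching $M_{2}$ must reach all of them; neither isolated odd cycles nor subtler Tutte-type obstructions in the odd-cycle graph are excluded by connectivity alone, and ``fewest odd cycles'' does not obviously combine with an induction on $|V(G)|$. I therefore expect the real argument to run by induction on $|V(G)|$ on a minimal counterexample, establishing a short list of unavoidable reducible configurations --- a triangle or other short cycle, a short cyclic edge-cut peeling off some of the odd cycles, or a prescribed local pattern of $M_{1}$-edges around an odd cycle --- each of which is contracted or split off, the statement applied to the smaller graph(s), and the resulting perfect matchings patched back together; the bookkeeping that makes the patching respect both ``the union leaves a bipartite graph'' and the parity conditions above is where the technical weight lies.
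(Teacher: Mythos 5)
This statement is not proved in the paper at all: it is stated as a conjecture (the $S_{4}$-Conjecture), and the paper only records in a footnote that it was settled during revision by Kardo\v{s}, M\'a\v{c}ajov\'a and Zerafa in \cite{KardosMacajovaZerafa}. So there is no proof in the paper to compare yours against, and your text is, by your own account, a strategy rather than a proof. The preliminary reductions are fine and standard: the $3$-edge-colourable case is trivial, the surgery on $2$- and non-trivial $3$-edge-cuts is routine, and the reformulation is correct --- $G-(M_{1}\cup M_{2})=\mathcal{C}-\bigl(M_{2}\cap E(\mathcal{C})\bigr)$ is bipartite exactly when $M_{2}$ contains an edge of every odd cycle of the $2$-factor $\mathcal{C}=G-M_{1}$, which is the formulation already used in \cite{gmjp} and \cite{KardosMacajovaZerafa}. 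But none of this reduces the difficulty; it merely restates the conjecture.

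The genuine gap is the entire coordination step, which you flag yourself but do not close. Nothing guarantees that the auxiliary ``odd-cycle graph'' (odd cycles of $\mathcal{C}$ joined by $M_{1}$-edges running between two of them) has a perfect matching, or even that it has no isolated vertices: minimising the number of odd cycles does not prevent an odd cycle from sending all its $M_{1}$-edges to even cycles, and cyclic $4$-edge-connectivity does not rule out Tutte-type obstructions in that auxiliary graph. Moreover, restricting $A=M_{1}\cap M_{2}$ to a pairing of odd cycles is an unnecessary and possibly fatal restriction: one also needs every cycle $C$ of $\mathcal{C}$ (even ones included) to satisfy the parity/spacing condition that $C-V(A)$ has a perfect matching, and these conditions interact globally across all cycles simultaneously. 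Your closing paragraph concedes that you expect a different argument via unavoidable reducible configurations, but no such list is produced or verified. The actual proof in \cite{KardosMacajovaZerafa} is a substantial argument precisely because this global existence statement about a perfect matching meeting every odd cycle of a complementary $2$-factor cannot be obtained from connectivity and extremality considerations of the kind you outline. As it stands, the proposal establishes the easy equivalences but leaves the conjecture itself unproved.
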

We remark that in \cite{gmjp}, the first and last author showed that Conjecture \ref{conj:s4} is equivalent to saying that for every bridgeless cubic graph $G$, $S_{4}\prec G$, where $S_{4}$ is the subcubic multigraph portrayed in Figure \ref{figure s4}.

\begin{figure}[h]
\begin{subfigure}[t]{.32\textwidth}
\centering
      \includegraphics[width=0.4\textwidth]{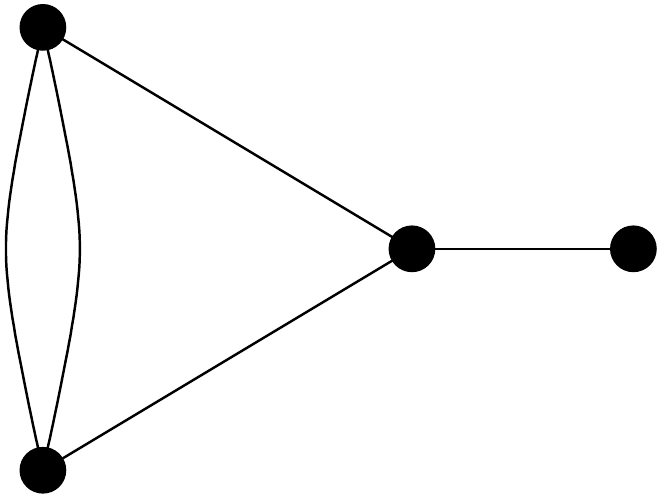}
      \caption{The multigraph $S_4$}
      \label{figure s4}
\end{subfigure}
\begin{subfigure}[t]{.32\textwidth}
  \centering
  \includegraphics[width=.7\textwidth]{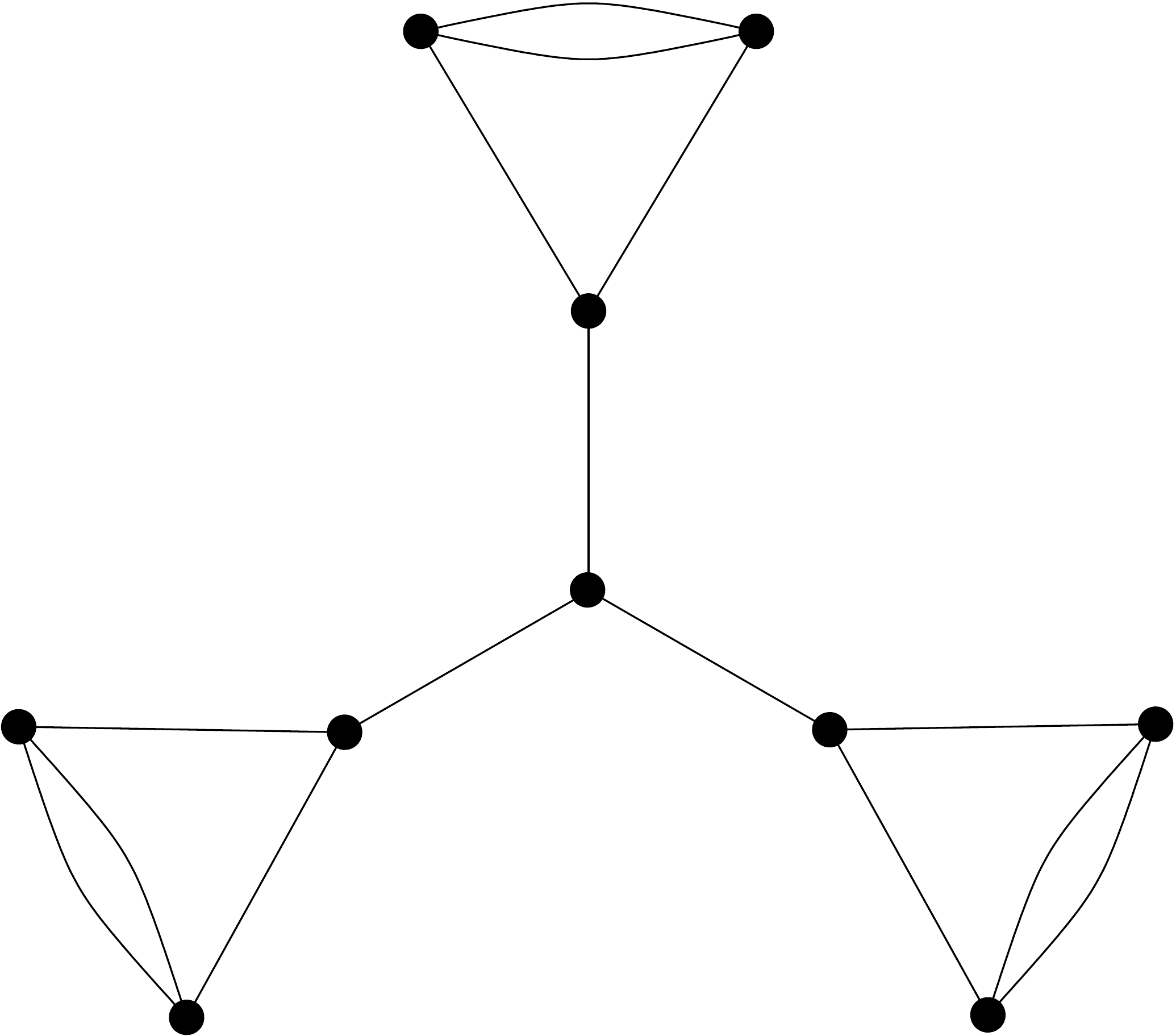}
\caption{The Sylvester graph $S_{10}$}
      \label{figure s10}
\end{subfigure}
\begin{subfigure}[t]{.32\textwidth}
  \centering
  \includegraphics[width=.7\textwidth]{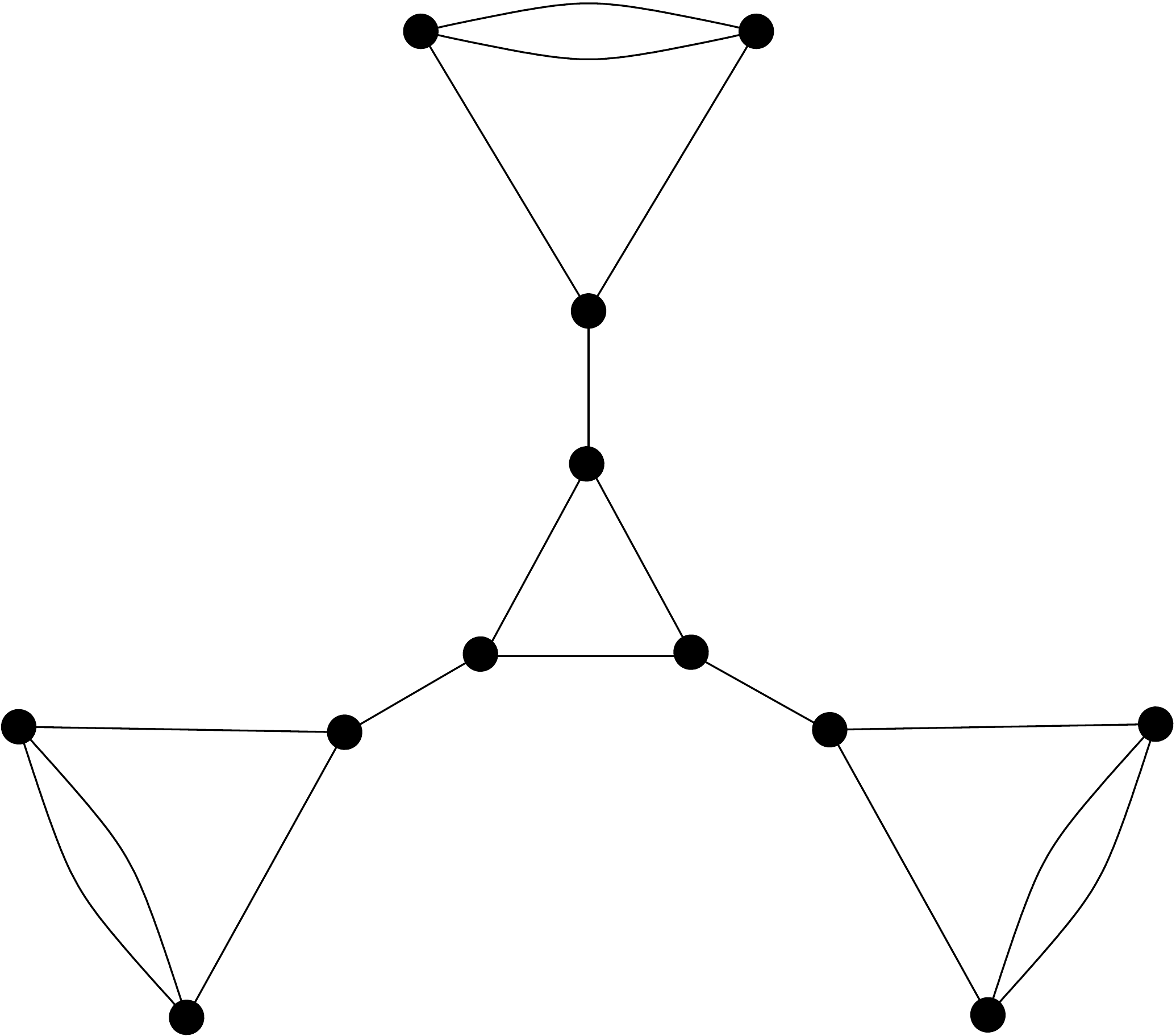}
\caption{The multigraph $S_{12}$}
\label{figure s12}
\end{subfigure}

\caption{}
\label{figure s4 s10 s12}
\end{figure}

Proving the Petersen Colouring Conjecture would also confirm the Cycle Double Cover Conjecture \cite{seymour, szekeres, ZhangBook} which is a conjecture stated for general graphs and not only for cubic graphs. It is due to these huge consequences that the Petersen Colouring Conjecture is, arguably, one of the most trying and arduous conjectures in graph theory. In the same spirit of Jaeger's Conjecture, Mkrtchyan also proposed the following two conjectures for cubic graphs, for which connectivity conditions are relaxed---in fact, the following two conjectures are stated for cubic graphs which are not necessarily bridgeless.

\begin{conjecture}[$S_{12}$-Conjecture---Mkrtchyan, 2012 \cite{VahanPetersen}]
\label{conj:S12}
For each cubic graph $G$ admitting a perfect matching, $S_{12}\prec G$. 
\end{conjecture}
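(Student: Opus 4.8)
\textbf{A plan of attack for Conjecture \ref{conj:S12}.}
The starting point I would use is that every bridge of a cubic graph admitting a perfect matching must belong to \emph{every} perfect matching: if a bridge $e=xy$ were absent from some perfect matching $M$, then $M$ would restrict to a perfect matching of each component of $G-e$, but each such component has exactly one vertex of degree $2$ and all others of degree $3$, hence odd order --- a contradiction. This pins down the local picture at a bridge and invites a reduction along bridges: delete all bridges of $G$ to obtain a tree of $2$-edge-connected pieces, each cubic except at the former bridge endpoints, where a half-edge dangles. The multigraph $S_{12}$ of Figure \ref{figure s12} carries parallel edges, and the guess I would follow is that these are present precisely to serve as a \emph{bridge gadget}: one should be able to isolate a small configuration $T$ inside $S_{12}$ such that any neighbourhood pattern a half-edge can be forced to display in an $S_{12}$-colouring of a piece is matched when two half-edges are reconnected through a copy of $T$. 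Granting this, an $S_{12}$-colouring of $G$ is assembled from $S_{12}$-colourings of its $2$-edge-connected pieces.

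For the $2$-edge-connected (equivalently, bridgeless) pieces I would invoke the transitivity of $\prec$ --- if $A\prec B$ and $B\prec C$ then $A\prec C$, which follows at once from the definition --- together with the conjectures already on the table. If $P\prec S_{12}$ (a finite check on the small graph $S_{12}$), then ``$S_{12}$ colours every bridgeless cubic graph'' follows from the Petersen Colouring Conjecture (Conjecture \ref{conj:petersen}), since $P\prec S_{12}$ and $P\prec G$ give $S_{12}\prec G$; conversely, the reverse implication --- that Conjecture \ref{conj:petersen} yields Conjecture \ref{conj:S12} --- is precisely the bridge-reduction carried out through copies of $T$. The honest target is therefore to prove Conjecture \ref{conj:S12} \emph{equivalent} to Conjecture \ref{conj:petersen}. (Routing through $S_{4}$ instead --- which, now that the $S_{4}$-Conjecture is a theorem, colours every bridgeless cubic graph --- is almost surely a dead end: $S_{12}\prec S_{4}$ would make Conjecture \ref{conj:petersen} \emph{unconditional} by the same composition, so it should be tested and, I expect, eliminated.)

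The main obstacle is twofold. First, the bridgeless core of Conjecture \ref{conj:S12} simply \emph{is} the Petersen Colouring Conjecture, so this route cannot deliver an unconditional proof --- the realistic deliverable is the equivalence. Second, the equivalence rests on the fine combinatorics of the gadget $T$: one must enumerate the patterns a perfect-matching edge crossing a bridge can be compelled to carry, verify that $T$ realises all of them, and check that the parity fact above never obstructs the reconnection. I expect the hardest configurations to be several bridges meeting at a common vertex, or a long path of consecutive bridges, where compatibility of patterns must be propagated simultaneously through several copies of $T$.
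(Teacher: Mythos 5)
Conjecture \ref{conj:S12} is stated in the paper as an \emph{open conjecture} of Mkrtchyan; the paper neither proves it nor claims to, so there is no proof of the statement to compare yours against. More to the point, what you have written is a plan rather than a proof, and by your own admission it cannot close: even granting the bridge reduction, the bridgeless core is handed off to Conjecture \ref{conj:petersen}, which is open, and the gadget $T$ on which the whole reassembly rests is never exhibited, let alone verified. Your opening parity argument (every bridge lies in every perfect matching, hence each vertex meets at most one bridge) is correct, and transitivity of $\prec$ is a valid and standard tool, but nothing beyond these two preliminary facts is actually established.

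There are also concrete errors in the conditional part. With the paper's convention ($H\prec G$ means $G$ admits an $H$-colouring), deducing $S_{12}\prec G$ from $P\prec G$ by transitivity requires $S_{12}\prec P$, i.e.\ an $S_{12}$-colouring of $P$ --- not ``$P\prec S_{12}$'' as you wrote. The parenthetical about $S_{4}$ is doubly off: $S_{12}\prec S_{4}$ is impossible for trivial degree reasons ($S_{4}$ has a vertex of degree $1$ while $S_{12}$ is cubic), and even if it held it would yield $S_{12}\prec G$, not Conjecture \ref{conj:petersen}. In dismissing this route you miss the mechanism the paper actually exploits: $S_{12}$ \emph{exposes} $S_{4}$, so any $S_{4}$-colouring of a cubic graph $G$ is already an $S_{12}$-colouring of $G$ (every vertex of $G$ maps to a degree-$3$ vertex of the exposed copy, whose incident edges in $S_{12}$ all lie inside the copy; the degree-$1$ vertex is simply unused). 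Combined with the now-proved $S_{4}$-Conjecture (see the paper's footnote and \cite{KardosMacajovaZerafa}), this settles the bridgeless case of Conjecture \ref{conj:S12} unconditionally, with no appeal to Conjecture \ref{conj:petersen}. What remains --- and what your proposal does not address in any checkable way --- is precisely the case of cubic graphs with bridges, where the compatibility of colourings across a bridge (your unconstructed $T$) is the entire difficulty. As it stands the proposal proves nothing about the stated conjecture.
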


\begin{conjecture}[$S_{10}$-Conjecture---Mkrtchyan, 2012 \cite{VahanPetersen}]
\label{conj:S10}
For each cubic graph $G$, $S_{10}\prec G$. 
\end{conjecture}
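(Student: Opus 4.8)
The plan is to imitate the line of attack developed for the Petersen Colouring Conjecture, while isolating the one genuinely new ingredient: unlike $P$, the graph $S_{10}$ is meant to colour cubic graphs that contain bridges. First I would rephrase the problem combinatorially. As $S_{10}$ is a simple cubic graph, no two of its vertices have equal edge-neighbourhoods, so (as noted above) every $S_{10}$-colouring $f$ of a cubic graph $G$ carries a vertex map $f_V\colon V(G)\to V(S_{10})$, and the defining conditions at the two ends of an edge $uu'$ force $f(uu')\in\partial_{S_{10}}f_V(u)\cap\partial_{S_{10}}f_V(u')$; conversely, a map $f_V$ that restricts to a bijection between the star at each vertex of $G$ and the star at its image in $S_{10}$ yields an $S_{10}$-colouring. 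Thus Conjecture \ref{conj:S10} says precisely that every cubic graph admits such a ``locally bijective'' map into $S_{10}$. An immediate simplification: every $3$-edge-colourable cubic graph is $S_{10}$-coloured by sending the three colour classes to the three edges at one fixed vertex of $S_{10}$, so only cubic graphs of class $2$ remain to be treated (note that $S_{10}$ must itself be of class $2$, since otherwise $S_{10}\prec G$ would force $G$ to be of class $1$, contradicting the conjecture for $G=P$).

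For the bridgeless case I would reduce to the Petersen Colouring Conjecture itself. Checking that $S_{10}\prec P$ is a finite computation on ten-vertex cubic graphs; granting it, composition of colourings shows that $P\prec G$ implies $S_{10}\prec G$, so the bridgeless case of Conjecture \ref{conj:S10} follows from Conjecture \ref{conj:petersen}, and is no harder than it. Unconditionally, the standard structural reductions of cubic graphs across bridges and across $2$- and $3$-edge-cuts --- which should be compatible with $S_{10}$-colourings exactly as they are with $P$-colourings --- would at least settle every cubic graph all of whose cyclically $4$-edge-connected ``pieces'' are $3$-edge-colourable, leaving the cyclically $4$-edge-connected class-$2$ graphs, that is, snarks, as the only hard case.

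The genuinely new part is the passage from the bridgeless case to arbitrary cubic graphs. A cubic graph with a bridge is never Petersen-colourable, so here one must exploit the specific structure of $S_{10}$ --- this is the whole reason $S_{10}$, rather than $P$, is the relevant graph. I would prove a gluing lemma by induction on the number of bridges: the statement to be inducted must be strengthened to prescribe, at a chosen degree-$2$ vertex created by deleting a bridge, both its image in $S_{10}$ and the unique incident edge of $S_{10}$ missing there, so that the two sides of the bridge can be re-joined consistently. Verifying that the required boundary configurations can always be realised is a finite but delicate analysis of $S_{10}$-colourings of cubic graphs with one degree-$2$ vertex.

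The main obstacle is the reliance on Conjecture \ref{conj:petersen} for the snark case: no reduction of that case to a finite check is known, so an unconditional proof of Conjecture \ref{conj:S10} is out of reach by present methods. The realistic target is therefore the conditional statement ``Conjecture \ref{conj:S10} holds if Conjecture \ref{conj:petersen} does'', with the bridge reduction and the small-cut reductions supplied unconditionally; a secondary obstacle, on which genuine work is needed even for that, is carrying the boundary bookkeeping cleanly through the gluing lemma and the cut reductions.
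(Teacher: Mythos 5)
The statement you are asked about is Conjecture~\ref{conj:S10}, which the paper records as an \emph{open conjecture} of Mkrtchyan; the paper contains no proof of it, and your proposal does not contain one either. Your own closing assessment is accurate: the bridgeless case is reduced to Conjecture~\ref{conj:petersen}, which is itself open, and the passage to graphs with bridges rests on a gluing lemma that you describe but do not establish. So what you have written is a (reasonable) research programme, not a proof, and the honest conclusion is that the statement remains conditional at best. Within that programme, the parts that do work are the easy ones: $\prec$ is transitive, $S_{10}\prec P$ does hold (consistently with Theorem~\ref{thm:main}, because $S_{10}$ exposes $S_4$ and the $S_4$-colouring of $P$ extends verbatim to an $S_{10}$-colouring), so the Petersen Colouring Conjecture would indeed imply the bridgeless case; and a cubic graph with a bridge is not $P$-colourable by Lemma~\ref{lem:properties}(c), so the bridged case genuinely requires the structure of $S_{10}$.

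Two concrete corrections. First, $S_{10}$ is \emph{not} a simple graph: it is the Sylvester multigraph of Figure~\ref{figure s4 s10 s12}, with a central degree-$3$ vertex and three pairs of parallel edges (it exposes $S_4$ three times). The induced vertex map $f_V$ is still well defined — by the discussion in Section~2 this fails only for $tK_2$ — but your stated justification is wrong, and several of your later steps (e.g.\ ``no two vertices have equal edge-neighbourhoods because $S_{10}$ is simple'') need to be re-derived for a multigraph. Second, and more seriously, the gluing lemma is not a ``finite but delicate analysis'': after cutting at a bridge the two pieces are not cubic, the prescribed boundary data must be shown to be \emph{realisable for every such piece}, and no argument of this kind is known; this is precisely where the conjecture is hard, since cubic graphs with bridges need not even have perfect matchings and the bridges of $S_{10}$ at its central vertex are the only structural feature available to absorb them. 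Without that lemma, and without Conjecture~\ref{conj:petersen}, the proposal proves nothing beyond the $3$-edge-colourable case.
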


The multigraph $S_{10}$ is also referred to as the \emph{Sylvester graph} and is depicted together with the multigraph $S_{12}$ in Figure \ref{figure s4 s10 s12} (see also \cite{sylvester}). 

Mkrtchyan proved the following theorem (Theorem 2.4 in \cite{VahanPetersen}).

\begin{theorem}[Mkrtchyan, 2012 \cite{VahanPetersen}]\label{theorem Vahan Petersen}
If $H$ is a connected bridgeless cubic graph with $H\prec P$, then $H \simeq P$.
\end{theorem}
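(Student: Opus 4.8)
The plan is to promote an arbitrary $H$-colouring $f$ of $P$ to a covering map of multigraphs $P\to H$, and then to read off from elementary fibre-counting, together with the non-bipartiteness of $P$, that this covering must be an isomorphism.

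First I would dispose of the degenerate case in which $H$ has two distinct vertices $v\ne w$ with $\partial_H v=\partial_H w$. Since $H$ is connected, cubic and loopless, all three edges at $v$ are then parallel copies of $vw$, so $H$ consists of exactly two vertices joined by a triple edge. But for such an $H$ the set $\partial_H v$ is the whole edge set of $H$, so an $H$-colouring of $P$ is nothing but a proper $3$-edge-colouring of $P$ in which all three colours occur at each vertex; no such colouring exists, because $P$ is a snark. Hence this case cannot arise, and for the $H$ at hand the induced map $f_V:V(P)\to V(H)$ is well defined.

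Next I would check that $f_V$, together with $f$ itself acting on edges, is a covering map. For a vertex $u$ of $P$ the three edges of $\partial_P u$ receive three distinct colours because $f$ is proper, these colours together form $\partial_H f_V(u)$ by the defining property of an $H$-colouring, and $|\partial_P u|=3=|\partial_H f_V(u)|$ since $P$ and $H$ are both cubic; so $f$ restricts to a bijection $\partial_P u\to\partial_H f_V(u)$. Moreover each edge $uu'$ of $P$ is sent to an edge of $H$ incident to both $f_V(u)$ and $f_V(u')$, which forces $f_V(u)\ne f_V(u')$ as $H$ is loopless. Thus $(f_V,f)$ is a morphism of multigraphs which is a local bijection around every vertex, i.e.\ a covering map; since $P$ is connected it is surjective and has some number of sheets $d\ge 1$ with $|V(P)|=d\,|V(H)|$ and $|E(P)|=d\,|E(H)|$. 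From $10=d\,|V(H)|$ and $15=d\,|E(H)|$ we get $d\mid\gcd(10,15)=5$, so $d\in\{1,5\}$.

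It remains to exclude $d=5$, which would give $|V(H)|=2$; but a connected loopless cubic graph on two vertices is again the triple edge already ruled out (equivalently, it is bipartite, and a covering graph of a bipartite graph is bipartite, contradicting the presence of $5$-cycles in $P$). Hence $d=1$, and a one-sheeted covering map of finite graphs is an isomorphism, so $H\simeq P$. I do not expect a genuine obstacle here: the only thing that needs to be seen is that cubicity turns the rather loose notion of an $H$-colouring into an honest covering map, after which the conclusion is forced by the coprimality-like relation between $|V(P)|=10$ and $|E(P)|=15$.
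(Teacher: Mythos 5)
The paper itself does not prove this statement; it imports it verbatim from Mkrtchyan's paper, so there is no internal proof to compare against. Judged on its own merits, your argument has one genuine gap, and it sits exactly at the step that carries all the weight: the claim that $(f_V,f)$ is a covering map. Your justification for local injectivity of $f_V$ is that $f(uu')$ is incident to both $f_V(u)$ and $f_V(u')$, ``which forces $f_V(u)\ne f_V(u')$ as $H$ is loopless.'' This does not follow: if $f_V(u)=f_V(u')=v$, then $f(uu')$ is merely required to be an edge incident to $v$, which need not be a loop; the inference ``incident to both endpoints, hence a loop if they coincide'' presupposes the conclusion. Worse, the statement you are trying to establish is genuinely false for $H$-colourings of cubic graphs in general: take $G=K_{3,3}$ and $H=K_4$, and let $f$ send the three colour classes of a proper $3$-edge-colouring of $K_{3,3}$ to the three edges of $K_4$ incident with a fixed vertex. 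This is a valid $K_4$-colouring with $f_V$ constant, $K_4$ is connected, bridgeless and cubic, and $f$ is nowhere near a covering map. So cubicity alone does not ``turn an $H$-colouring into an honest covering map''; any correct proof must exploit specific properties of the Petersen graph (it is not $3$-edge-colourable, every perfect matching complements into two $5$-cycles, odd girth) to rule out adjacent vertices of $P$ having the same image. Indeed, the paper's own Lemma \ref{lemma exactly 1 comp} confronts precisely such a configuration---all vertices of one $5$-cycle of $P$ minus a perfect matching mapped to a single vertex of $H$---and eliminates it only by a parity argument on the odd cycles, not by looplessness.

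The rest of the proposal is sound: the disposal of the degenerate case $\partial_Hv=\partial_Hw$ (which reduces $H$ to the triple edge and the colouring to a Tait colouring of $P$) is correct, and once local injectivity of $f_V$ were established, the sheet-counting $d\mid\gcd(10,15)$ and the exclusion of $d=5$ would indeed finish the proof cleanly. But as written the argument is incomplete at its central step, and repairing it requires an analysis of the same flavour as (and comparable length to) the matching/parity arguments the paper carries out in Lemma \ref{lemma exactly 1 comp} and Theorem \ref{thm:main}.
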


Consequently, the following holds.

\begin{corollary}[Mkrtchyan, 2012 \cite{VahanPetersen}]\label{corollary Vahan Petersen}
If $H$ is a connected bridgeless cubic graph such that $H\prec G$ for every bridgeless cubic graph $G$, then $H\simeq P$. 
\end{corollary}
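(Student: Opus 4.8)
The plan is to deduce the statement almost immediately from Theorem \ref{theorem Vahan Petersen}. The key observation is that the Petersen graph $P$ is itself a connected bridgeless cubic graph (it is $3$-regular and $3$-edge-connected, hence in particular bridgeless), so it is one of the graphs $G$ over which the hypothesis on $H$ quantifies. Hence I would first instantiate the hypothesis ``$H\prec G$ for every bridgeless cubic graph $G$'' with the particular choice $G=P$, obtaining $H\prec P$.

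Next, since $H$ is assumed to be a connected bridgeless cubic graph and we have just shown $H\prec P$, the hypotheses of Theorem \ref{theorem Vahan Petersen} are satisfied verbatim. Applying that theorem yields $H\simeq P$, which is precisely the conclusion of the corollary, so no further argument is required.

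The only point that needs care is purely logical bookkeeping: one must confirm that $P$ genuinely lies in the class of graphs that $H$ is required to colour, and that the degree and connectivity assumptions placed on $H$ in the corollary coincide with those demanded of $H$ in Theorem \ref{theorem Vahan Petersen} — both checks are trivial here. Thus all the genuine mathematical content resides in Theorem \ref{theorem Vahan Petersen}, and the corollary is a one-line specialisation; there is no real obstacle. (Were Theorem \ref{theorem Vahan Petersen} unavailable, one would instead have to work directly — for instance, deriving constraints on $|V(H)|$ and $|E(H)|$ from an $H$-colouring of a suitably chosen small bridgeless cubic graph and then eliminating every connected bridgeless cubic graph other than $P$ — but given the theorem this detour is unnecessary.)
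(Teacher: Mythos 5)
Your proposal is correct and matches the paper exactly: the paper derives this corollary from Theorem \ref{theorem Vahan Petersen} precisely by noting that $P$ is itself a bridgeless cubic graph, so the hypothesis yields $H\prec P$ and the theorem gives $H\simeq P$. No further comment is needed.
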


In other words, the previous result says that we cannot replace the Petersen graph in Conjecture \ref{conj:petersen} with any other connected bridgeless cubic graph.
Nevertheless, if we choose $H$ from the larger class of connected cubic graphs (not necessarily bridgeless), there are other possible candidates. 
In particular, if we minimise the assumptions on the graph $H$ by considering the class of connected graphs (not even cubic), then another candidate is given by the graph $S_4$.

Theorem \ref{thm:main} is one of the main results of this paper, and it is a generalisation of Theorem \ref{theorem Vahan Petersen}: it is obtained by removing any restriction on the degree of the vertices of the graph $H$ in an $H$-colouring of the Petersen graph. Analogously, Corollary \ref{cor:main} is the natural generalisation of Corollary \ref{corollary Vahan Petersen}, but, in order to explain its statement, we need to introduce the following terminology. Let $G$ be a multigraph having three degree $3$ vertices and a further vertex of arbitrary degree. Denote this set of four vertices by $X$. If the induced multisubgraph $G[X]$ is isomorphic to $S_4$, then we say that $G$ \emph{exposes} $S_4$ and that $G[X]$ is an \emph{exposed copy} of $S_{4}$ in $G$. Observe that both $S_{10}$ and $S_{12}$ expose (three times) $S_4$.

Indeed, as a consequence of Theorem \ref{thm:main} we prove that the unique graphs that can colour every bridgeless cubic graph are exactly $P$ and all graphs which expose $S_4$ (see Corollary \ref{cor:main}).
In a similar way, Corollary \ref{cor1:S10} and Corollary \ref{cor1:S12} in Section \ref{section cubic} would follow if Conjecture \ref{conj:S10} and Conjecture \ref{conj:S12} are respectively true.

All the above mentioned conjectures deal with the question asking whether there exists a connected graph $H$ such that $H \prec G$ for any $G$ in a given class of cubic graphs. Table \ref{Table_cubic_case} shows the possibilities for the eventual existence of such a graph $H$, and is divided according to the cases when $H$ is assumed to be a simple graph or a graph with parallel edges. In this table, we consider three classes of graphs (that may admit parallel edges) to be coloured by some connected graph $H$: (i) bridgeless cubic graphs, (ii) cubic graphs admitting a perfect matching, and (iii) cubic graphs. By Corollary \ref{cor:main}, Corollary \ref{cor:S10} and Corollary \ref{cor:S12}, if the graph $H$ that colours all the graphs in each of the corresponding classes exists, then the only possibilities are the ones presented in the table. 

\begin{remark}\label{remark table1}
Theorem \ref{thm:main}, together with the fact that it is possible to construct cubic graphs with a perfect matching having a subgraph as in Figure \ref{fig2_12}, implies that a connected simple graph $H$ that colours any cubic graph $G$ with a perfect matching does not exist (see \cite{MazzuoccoloMkrtchyan} for details). Even more so, there is no connected simple graph that colours any cubic graph.
\end{remark}

\begin{figure}[h]
      \centering
      \includegraphics[width=0.25\textwidth]{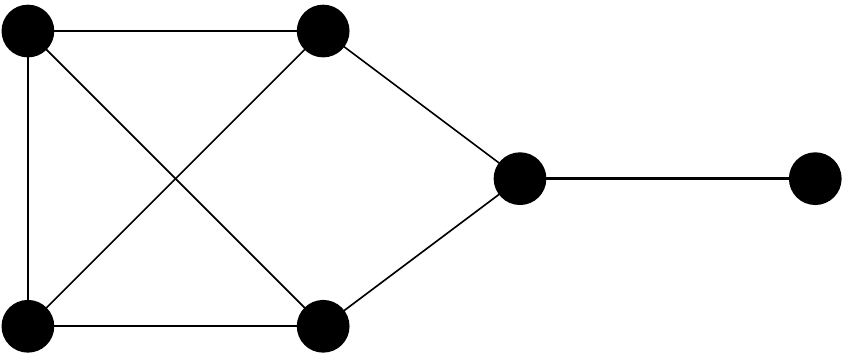}
      \caption{The subgraph mentioned in Remark \ref{remark table1}}
      \label{fig2_12}
\end{figure}

\begin{table}[h]
\centering
\begin{tabular}{cccc}
\emph{Cubic graphs}  & \emph{$H$ simple graph} & \emph{$H$ with parallel edges} \\
\cmidrule[1pt]{1-3}
 bridgeless & $H\simeq P$ (Theorem \ref{thm:main})  & $H_f\simeq S_4$ (Theorem \ref{thm:main}) \\
\cmidrule[0.5pt]{1-3}
with a perfect matching & $\nexists$ (Remark \ref{remark table1})  & $H \simeq S_{10}$ or $H\simeq S_{12}$ (Corollary  \ref{cor:S12})
\\
\cmidrule[0.5pt]{1-3}
any & $\nexists$ (Remark \ref{remark table1}) & $H\simeq S_{10}$ (Corollary \ref{cor:S10})
\\
\cmidrule[1pt]{1-3}
\end{tabular}
\caption{Possibilities for the eventual existence of an $H$-colouring for different classes of cubic graphs}\label{Table_cubic_case}
\end{table}

In the second part of the paper we partially answer the question dealing with whether there exists a graph $H$ such that $H \prec G$ for any $r$-regular graph $G$, for $r>3$, in a given class. The results obtained are summarised in Table \ref{Table_reg_case}.

\begin{table}[h]
\centering
\begin{tabular}{cc}
$r$-regular graphs, $r>3$  & $H$ (multi)graph \\
\cmidrule[1pt]{1-2}
simple graphs  & $\nexists$ for $r$ even (Theorem \ref{theorem G simple even degree}) \\
\cmidrule[0.5pt]{1-2}
multigraphs & $\nexists$ for any $r$ (Theorem \ref{theorem r_regular})
\\
\cmidrule[1pt]{1-2}
\end{tabular}
\caption{Non existence of an $H$-colouring for $r$-regular simple graphs and multigraphs}
\label{Table_reg_case}
\end{table}

\section{Notation and technical lemmas}


Before continuing, we need some further definitions and notation which we introduce in order to focus our study only on the relevant part of $H$ in a given $H$-colouring $f$ of some graph $G$. In what follows, the irrelevant part of $H$ shall arise due to the vertices $v\in V(H)$ for which $v\not\in\textrm{Im}(f_{V})$. Such vertices may occur in $H$, and in the sequel they shall be referred to as unused.

\begin{lemma}\label{lem:connected}
Let $G$ be a connected graph and let $f:E(G)\to E(H)$ be an $H$-colouring of $G$. Then, the induced subgraph $H[\textrm{Im}(f)]$ of $H$ is connected. 
\end{lemma}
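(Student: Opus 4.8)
The plan is to show that the image in $H$ of any walk in $G$ is again a walk in $H$ all of whose edges lie in $\mathrm{Im}(f)$, and then to use the connectedness of $G$ to join any two vertices of $H[\mathrm{Im}(f)]$ by such an image walk.

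First I would record the basic local observation. Fix an edge $a=xy\in E(G)$. By the definition of an $H$-colouring there exist vertices $v_x,v_y\in V(H)$ with $f(\partial_G x)=\partial_H v_x$ and $f(\partial_G y)=\partial_H v_y$; since $a\in\partial_G x\cap\partial_G y$, the edge $f(a)$ lies in $\partial_H v_x\cap\partial_H v_y$ and hence is incident in $H$ to both $v_x$ and $v_y$. As $H$ is loopless, $v_x\neq v_y$, so $\{v_x,v_y\}$ is precisely the pair of endvertices of $f(a)$. The vertex associated with $x$ in this way does not depend on which edge of $\partial_G x$ is used, so we obtain a map $\phi:V(G)\to V(H)$ with $f(\partial_G u)=\partial_H\phi(u)$ for every $u\in V(G)$ (this coincides with $f_V$ when the latter is defined, but I prefer to work with $\phi$ directly so as not to assume $H$ has no twin vertices).

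Next I would push walks forward. Let $u_0e_1u_1e_2\cdots e_ku_k$ be a walk in $G$. Applying the local observation to each $e_i=u_{i-1}u_i$, the edge $f(e_i)\in E(H)$ has endvertices $\phi(u_{i-1})$ and $\phi(u_i)$; therefore $\phi(u_0)\,f(e_1)\,\phi(u_1)\,f(e_2)\cdots f(e_k)\,\phi(u_k)$ is a walk in $H$, and every edge it uses belongs to $\mathrm{Im}(f)$, hence to $H[\mathrm{Im}(f)]$.

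Finally I would assemble the argument. By definition every vertex of $H[\mathrm{Im}(f)]$ is an endvertex of some edge $f(a)\in\mathrm{Im}(f)$ with $a\in E(G)$, and by the local observation it is therefore of the form $\phi(x)$ for some $x\in V(G)$. Given two vertices $\phi(x)$ and $\phi(x')$ of $H[\mathrm{Im}(f)]$, connectedness of $G$ provides a walk from $x$ to $x'$ in $G$, whose image under the previous step is a walk from $\phi(x)$ to $\phi(x')$ contained in $H[\mathrm{Im}(f)]$. Hence $H[\mathrm{Im}(f)]$ is connected (and when $G$ has no edges the claim is vacuous). I do not expect a genuine obstacle; the only point needing care is exactly the one noted above, namely to phrase everything in terms of the vertices $v\in V(H)$ with $f(\partial_G u)=\partial_H v$ rather than via $f_V$, which need not be well-defined.
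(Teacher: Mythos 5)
Your overall strategy (push a walk of $G$ forward and use the connectivity of $G$) is the same as the paper's, but the ``local observation'' on which you build it contains a step that fails. From $f(a)\in\partial_H v_x\cap\partial_H v_y$ you conclude that $v_x\neq v_y$ ``as $H$ is loopless'', and hence that the endvertices of $f(a)$ are exactly $\phi(x)$ and $\phi(y)$. Neither claim is true: nothing prevents $v_x=v_y$, in which case one endvertex of $f(a)$ need not lie in the image of $\phi$ at all. A concrete instance occurs in this very paper: in the $S_4$-colouring of the Petersen graph (Figure \ref{figure S4 P}), the spoke $u_1v_1$ is coloured by the pendant edge $e=uv$ of $S_4$, and both $u_1$ and $v_1$ are forced to map to $u$, because $|\partial_{S_4}v|=1$ while $|f(\partial_P u_1)|=3$; the endvertex $v$ of $f(u_1v_1)$ is what the paper calls an \emph{unused} vertex and is not $\phi(x)$ for any $x\in V(P)$. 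This breaks both your pushed-forward walk (consecutive $\phi$-values may coincide while the edge between them is not a loop, so the displayed sequence is not literally a walk) and, more seriously, your final claim that every vertex of $H[\textrm{Im}(f)]$ has the form $\phi(x)$ --- that claim is exactly what the paper's notion of unused vertices is designed to deny.

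The gap is repairable without changing the idea: for each edge $e_i=u_{i-1}u_i$ of a walk, $f(e_i)$ is incident to both $\phi(u_{i-1})$ and $\phi(u_i)$, so these two vertices are either equal or joined by an edge of $\textrm{Im}(f)$, and in either case they lie in the same component of $H[\textrm{Im}(f)]$; hence $\phi(V(G))$ lies in a single component, and every remaining vertex of $H[\textrm{Im}(f)]$ is an endvertex of some $f(a)$ with $a=xy$ and is therefore adjacent to the vertex $\phi(x)$ of that component. The paper sidesteps the issue entirely by arguing on edges rather than vertices: two adjacent edges of $G$ are mapped to adjacent edges of $H$, so connectivity of $G$ immediately yields connectivity of the edge-induced subgraph $H[\textrm{Im}(f)]$, with no need to identify the endvertices of $f(a)$.
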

\begin{proof}
Observe that by definition of $H$-colouring, if $e_1$ and $e_2$ are two adjacent edges of $G$, then $f(e_1)$ is adjacent to $f(e_2)$ in $H[\textrm{Im}(f)]$. The result follows immediately by the connectivity assumption on $G$.
\end{proof}

By the previous lemma, from now on we can assume that $H$ is connected, since only the edges of one connected component belong to the image of any $H$-colouring of a connected graph $G$.
Note that if $H$ is connected then the map $f_V$ is well defined for any given $H$-colouring $f$, except if $H$ is the graph $tK_2$ on two vertices and with $t$ parallel edges between them.  Moreover, it is straightforward that a graph $G$ admits a $tK_2$-colouring if and only if $G$ is $t$-regular and $t$-edge-colourable and consequently, if and only if it admits a $K_{1,t}$-colouring, where $K_{1,t}$ is the star on $t+1$ vertices. Hence, it is not restrictive assuming $|V(H)|>2$ in what follows.

Let $H$ and $G$ be connected graphs such that $H \prec G$ and $|V(H)|>2$. Let $f$ be an $H$-colouring of $G$ and consider the map $f_V$. We denote by $H_f$, the edge-induced subgraph $H[\textrm{Im}(f)]$ and with a slight abuse of terminology we shall refer to the graph $H_f$ as the image of the $H$-colouring $f$. Note that in general $\textrm{Im}(f_V) \subseteq V(H_f)$, since an edge $uv$ of $H_f$ must have at least one of its endvertices $u$ and $v$ in $\textrm{Im}(f_V)$, but not necessarily both of them. Every vertex of $H_f$ which does not belong to $\textrm{Im}(f_V)$ is said to be \emph{unused}.

Starting from the graph $H_f$, we can obtain a large variety of connected graphs, say $H'$, such that $G$ admits an $H'$-colouring. A first easy procedure is obtained by considering an arbitrary connected graph $H'$ having $H_f$ as an induced subgraph with the further property that $d_{H'}(v)=d_{H_f}(v)$ for every $v \in \textrm{Im}(f_V)$. A more general way is obtained by  eventually splitting in advance unused vertices of $H_f$ in arbitrary graphs (see Figure \ref{figure notation} for a possible example, where splitting of vertices is also portrayed).  Finally, we remark that if $H_f$ has no unused vertex (that is, $H_f=H$), then no connected graph $H'$ different from $H$ can be obtained as a combination of previous operations.

\begin{definition}
Let $G$ and $H$ be connected graphs such that $|V(H)|>2$ and $H \prec G$. Let $f$ be an $H$-colouring of $G$ and let $f_V$ be the induced map on the vertices of $G$. We define the graph $\tilde{H}_f$ as the graph obtained from $H_f$ by splitting every unused vertex $u$ of $H_f$ into $d_{H_f}(u)$ vertices of degree $1$. We refer to the graph $\tilde{H}_f$ as the \emph{splitted image} of $f$.
\label{def:htilde} 
\end{definition}

\begin{figure}[h]
      \centering
      \includegraphics[width=0.7\textwidth]{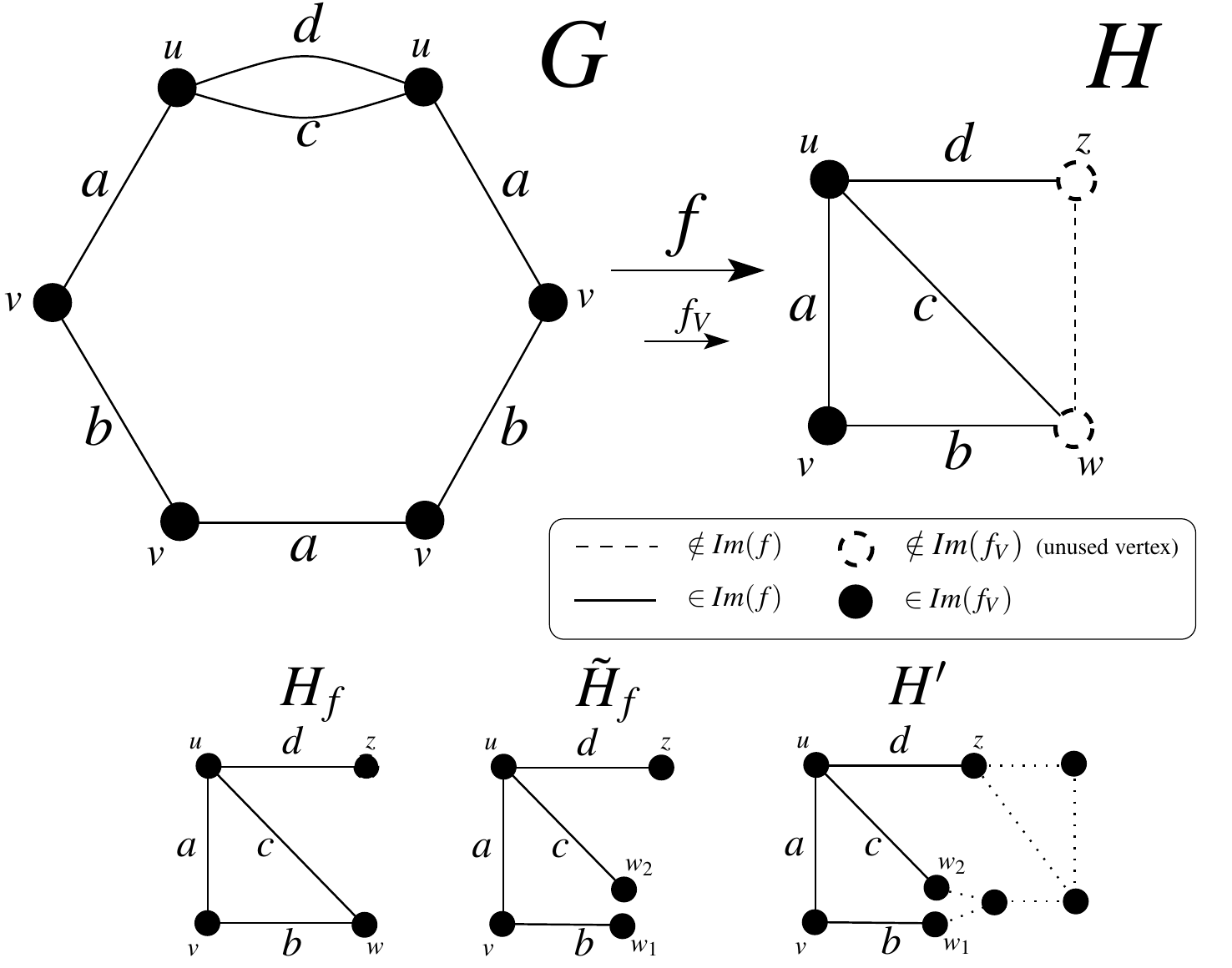}
      \caption{$H$, $H_f$, $\tilde{H}_f$ and a possible example for $H'$}
      \label{figure notation}
\end{figure}

In what follows, with a slight abuse of notation, we shall always refer in the same way to a vertex $u$ in $\textrm{Im}(f_V)$ independently to whether we are considering it in $H$, $H_f$ or $\tilde{H}_f$. For simplicity, the functions corresponding to an $H_f$-colouring and an $\tilde{H}_f$-colouring of some graph $G$ are both denoted by $f$ as well. An unused vertex $v$ is referred to in the same way both in $H$ and in $H_{f}$, whilst the vertices of $\tilde{H}_{f}$ obtained by splitting $v$ are referred to as \emph{the vertices arising from $v$}. Finally, we remark that since $G$ is connected, every two distinct vertices in $H_{f}$ are the endvertices of a path whose inner vertices all belong to $\textrm{Im}(f_{V})$. Consequently, $\tilde{H}_{f}$ is connected by Lemma \ref{lem:connected}.

In what follows we make use of some results contained in Lemma 2.2 in \cite{HakMkr}. We reproduce only the part of the lemma that we shall need in the sequel, even if in a slightly more general form. Moreover, we add and prove statement \emph{(d)}.

\begin{lemma}
Let $G$ and $H$ be graphs with $H \prec G$, and let $f$ be an $H$-colouring of $G$.
\begin{enumerate}[(a)]
 \item If $M$ is any matching of $H$, then $f^{-1}(M)$ is a matching of $G$.
 \item $\chi'(G)\leq \chi'(H)$ (where $\chi'$ denotes the chromatic index of a graph).
 \item If $M$ is a perfect matching of $H$, then $f^{-1}(M)$ is a perfect matching of $G$.
\item If $G$ is connected, let $X$ be an edge-cut of $H_f$ such that $H_{f} - X$ does not contain any isolated vertex. Then $f^{-1}(X)$ is an edge-cut of $G$.
\end{enumerate}
\label{lem:properties}
\end{lemma}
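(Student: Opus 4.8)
The plan is to dispatch (a)--(c) with the routine ``colour-transport'' arguments and to reserve the real effort for the new statement (d).

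For (a) I would argue by contradiction: if two edges $e_1,e_2\in f^{-1}(M)$ met at a vertex $u$ of $G$, then $f(e_1)$ and $f(e_2)$ would be two distinct edges of $M$ — distinct because $f$ is a proper edge-colouring — both lying in $f(\partial_G u)=\partial_H v$ for the vertex $v\in V(H)$ associated with $u$; hence they would meet at $v$, contradicting that $M$ is a matching. Statement (b) follows at once by writing $E(H)$ as a union of $\chi'(H)$ matchings and pulling each of them back through $f$, which by (a) produces a proper edge-colouring of $G$ with at most $\chi'(H)$ colours. For (c), $f^{-1}(M)$ is a matching by (a), and it is perfect because for every $u\in V(G)$ the set $\partial_H v$ (with $v$ as above) meets the perfect matching $M$, so some edge incident to $u$ is sent by $f$ into $M$.

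For (d) I would start from an edge-cut $X$ of $H_f$ with $H_f-X$ free of isolated vertices, take $A$ to be the vertex set of one component of $H_f-X$ and set $B=V(H_f)\setminus A$, so that $A,B\neq\emptyset$ and every $A$--$B$ edge of $H_f$ lies in $X$. Observe first that the hypothesis forces $|V(H_f)|\geq 3$ and $H_f\neq tK_2$ (otherwise deleting any disconnecting edge-set would leave isolated vertices), so the induced vertex map $f_V:V(G)\to V(H_f)$ is well defined; put $A'=f_V^{-1}(A)$ and $B'=f_V^{-1}(B)$, a partition of $V(G)$. The key claim is that $A'$ and $B'$ are both nonempty. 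Suppose not, say $B'=\emptyset$, and take any $b\in B$: since $b\in V(H_f)$ it is incident in $H_f$ to some edge $e\in\textrm{Im}(f)$, say $e=f(g)$ with $g=xy\in E(G)$. As $g$ is incident to both $x$ and $y$, the edge $e$ lies in $\partial_H f_V(x)\cap\partial_H f_V(y)$, so $f_V(x)$ and $f_V(y)$ are both endvertices of $e$; since $B'=\emptyset$ they lie in $A$, hence differ from $b$. But $e$ has only two endvertices, so this forces $f_V(x)=f_V(y)\in A$ and the endvertices of $e$ to be exactly this vertex and $b$; thus $e$ is an $A$--$B$ edge and $e\in X$. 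As $e$ was an arbitrary edge of $H_f$ at $b$, the vertex $b$ is isolated in $H_f-X$, a contradiction. Hence $A',B'\neq\emptyset$.

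It then remains to check that every edge of $G$ with one end in $A'$ and one in $B'$ lies in $f^{-1}(X)$: if $g=xy$ with $x\in A'$ and $y\in B'$, then $f(g)$ is incident to both $f_V(x)\in A$ and $f_V(y)\in B$, and since these two vertices are distinct while $f(g)$ has exactly two endvertices, $f(g)$ is an $A$--$B$ edge of $H_f$, hence $f(g)\in X$ and $g\in f^{-1}(X)$. Consequently $G-f^{-1}(X)$ has no edge joining the nonempty sets $A'$ and $B'$, so it is disconnected and $f^{-1}(X)$ is an edge-cut of $G$. The one genuinely delicate point — and the only place the hypothesis that $H_f-X$ has no isolated vertex is consumed — is the nonemptiness of $A'$ and $B'$; the subtlety there is that $f_V$ may a priori identify the two ends of an edge of $G$, which is exactly why that step needs the small case distinction above rather than just reading off the endvertices of $e$. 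Everything else is bookkeeping with the defining property of an $H$-colouring.
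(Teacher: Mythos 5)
Your proposal is correct, but your proof of part \emph{(d)} takes a genuinely different route from the paper's. The paper simply cites \emph{(a)}--\emph{(c)} from Hakobyan and Mkrtchyan (your direct arguments for these are the standard ones and are fine), and for \emph{(d)} it argues globally: it restricts $f$ to $\overline{G}=G-f^{-1}(X)$, observes that this restriction is an $\overline{H}$-colouring of $\overline{G}$ for $\overline{H}=H_f-X$, uses the no-isolated-vertex hypothesis to conclude that the image graph of the restriction is exactly $\overline{H}$ (rather than a proper edge-induced subgraph), and then derives a contradiction from Lemma \ref{lem:connected}: if $f^{-1}(X)$ were not an edge-cut, $\overline{G}$ would be connected, forcing its image $\overline{H}$ to be connected, contradicting $X$ being an edge-cut of $H_f$. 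You instead work locally: you fix the bipartition $(A,B)$ of $V(H_f)$ induced by the cut, pull it back through $f_V$ to $(A',B')$, and verify by hand that both parts are nonempty and that every $A'$--$B'$ edge of $G$ lands in $X$. Both arguments are sound. The paper's is shorter and recycles Lemma \ref{lem:connected}; yours is self-contained, makes completely explicit where the no-isolated-vertex hypothesis is consumed (the nonemptiness of $A'$ and $B'$), and correctly flags and disposes of the one subtlety the paper's restriction argument never has to face, namely the well-definedness of $f_V$ (ruling out $H_f\simeq tK_2$, which your hypothesis does). Your care about $f_V$ possibly identifying the two ends of an edge of $G$ is exactly the right place to be careful, and your case analysis there is correct.
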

\begin{proof}
Statements \emph{(a)}, \emph{(b)}, \emph{(c)} follow from \cite{HakMkr}, so it suffices to prove statement \emph{(d)}.

\emph{(d)} Let $\overline{G}=G - f^{-1}(X)$ and $\overline{H}=H_f - X$. Consider $\overline{f} \colon E(\overline{G}) \to E(\overline{H})$, the restriction of $f$ to $\overline{G}$. Since $X=f(f^{-1}(X))$, the function $\overline{f}$ is an $\overline{H}$-colouring of $\overline{G}$, and since $H_{f}- X$ does not contain any isolated vertex, it holds that ${\overline{H}}_{\overline{f}}=\overline{H}$. Suppose that $f^{-1}(X)$ is not an edge-cut of $G$, for contradiction. This means that $\overline{G}$ is connected. However, by Lemma \ref{lem:connected}, ${\overline{H}}_{\overline{f}}=\overline{H}$ is connected, contradicting $X$ being an edge-cut of $H_f$. 
\end{proof}

Before we continue, we prove the following lemma which gives statement \emph{(c)} of Lemma \ref{lem:properties} as a corollary. This lemma shall also be used in Section \ref{section regular simple}.

\begin{lemma}\label{lemma matching pm}
Let $G$ and $H$ be graphs with $|V(H)|>2$. Let $f$ be an $H$-colouring of $G$ and let $f_{V}$ be the induced map on the vertices of $G$. If $M$ is a matching of $H$ such that every vertex $v\in\textrm{Im}(f_{V})$ is matched in $M$, then $f^{-1}(M)$ is a perfect matching of $G$.
\end{lemma}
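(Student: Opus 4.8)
The plan is to show directly that $f^{-1}(M)$ is a matching of $G$ which saturates every vertex of $G$. First I would argue that $f^{-1}(M)$ is a matching: this is exactly statement \emph{(a)} of Lemma \ref{lem:properties} (with $H$ replaced by $H_f$, which does not affect the argument since $M\cap E(H_f)$ is still a matching and $f$ takes values in $E(H_f)$), so two edges of $G$ sharing a vertex $u$ would be mapped to two edges of $M$ sharing the vertex $f_V(u)$, contradicting $M$ being a matching. Hence no vertex of $G$ is incident to two edges of $f^{-1}(M)$.

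Next I would show every vertex of $G$ is saturated. Take any $u\in V(G)$ and let $v=f_V(u)\in\textrm{Im}(f_V)$, so that $f(\partial_G u)=\partial_H v$. By hypothesis $v$ is matched in $M$, say $e\in M\cap\partial_H v$. Since $e\in\partial_H v=f(\partial_G u)$, there is an edge $e'\in\partial_G u$ with $f(e')=e$, i.e. $e'\in f^{-1}(M)\cap\partial_G u$. Thus $u$ is saturated by $f^{-1}(M)$. Combining the two steps, $f^{-1}(M)$ is a perfect matching of $G$.

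The one point that needs a little care — and which I expect is the only real obstacle — is making sure $f_V$ is actually well defined here, so that the phrase ``every vertex $v\in\textrm{Im}(f_V)$ is matched'' makes sense. The excerpt already handled this: since $|V(H)|>2$ and (by Lemma \ref{lem:connected}, restricting to the connected component in the image) we may take $H$ connected, $H$ is not of the form $tK_2$, and moreover no two distinct vertices of $H$ have the same incident-edge set, so $f_V$ is well defined. With that in hand the argument above goes through verbatim. Finally, statement \emph{(c)} of Lemma \ref{lem:properties} is recovered as the special case $M$ a perfect matching of $H$: then trivially every vertex of $H$ — in particular every vertex in $\textrm{Im}(f_V)$ — is matched in $M$, so $f^{-1}(M)$ is a perfect matching of $G$.
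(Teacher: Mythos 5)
Your proposal is correct and follows essentially the same route as the paper: the paper likewise invokes Lemma \ref{lem:properties}(a) for the matching property and then argues that each $u\in V(G)$ is saturated because $f_V(u)$ is matched in $M$, so some edge of $\partial_G u$ maps into $M$. Your extra remarks on the well-definedness of $f_V$ and on recovering statement \emph{(c)} are consistent with the surrounding discussion in the paper.
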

\begin{proof}
Since $M$ is a matching of $H$, by Lemma \ref{lem:properties}, $f^{-1}(M)$ is a matching of $G$, so it suffices to show that $f^{-1}(M)$ covers all the vertices of $G$. For each $u\in V(G)$, $f_{V}(u)\in\textrm{Im}(f_{V})$, and so there exists a unique edge $e\in M$ such that $e$ is incident to the vertex $f_{V}(u)$ in $H$. This means that for every vertex $u\in V(G)$, there exists exactly one edge in $\partial_{G}u$ which is coloured by an edge in $M$, implying that $f^{-1}(M)$ is a perfect matching of $G$, as required.
\end{proof}

\section{\emph{H}-colourings of cubic graphs}\label{section cubic}

Before proving the main result of this section (Theorem \ref{thm:main}) we need some further technical results for the case when $G$ is cubic.

\begin{remark}\label{remark:1_3degrees}
Consider an $H$-colouring $f$ of a connected cubic graph $G$. For every vertex $u \in V(\tilde{H}_f)$ exactly one of the following holds:
\begin{itemize}
 \item $u$ has degree $1$ in $\tilde{H}_f$ and either it is itself an unused vertex in $H_f$ or it arises from an unused vertex of $H_f$; or
 \item $u$ has degree $3$ in $\tilde{H}_f$ and it is a vertex of $H$ which belongs to $\textrm{Im}(f_V)$.
\end{itemize}
\end{remark}

\begin{lemma}\label{lemma exactly 1 comp}
Let $H$ be a connected graph. Let $f$ be an $H$-colouring of the Petersen graph $P$. If $e=uv$ is a bridge in $\tilde{H}_f$, then exactly one of $u$ and $v$ has degree $1$ in $\tilde{H}_f$. 
\end{lemma}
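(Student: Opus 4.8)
The plan is to reduce to two cases via Remark~\ref{remark:1_3degrees}, which tells us that every vertex of $\tilde{H}_f$ has degree $1$ or $3$; hence the statement fails precisely when $u$ and $v$ have the same degree. The subcase $d_{\tilde{H}_f}(u)=d_{\tilde{H}_f}(v)=1$ is immediate: then $\{u,v\}$ together with the edge $uv$ would be a connected component of $\tilde{H}_f$ isomorphic to $K_2$, whereas $\tilde{H}_f$ is connected and has a vertex of degree $3$ (for instance the image under $f_V$ of any vertex of $P$), a contradiction.

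The real work is to rule out the subcase in which $uv$ is a bridge with $d_{\tilde{H}_f}(u)=d_{\tilde{H}_f}(v)=3$. Set $M:=f^{-1}(uv)$; then $M$ is a non-empty matching of $P$ (properness of $f$; non-empty because $uv\in E(\tilde{H}_f)=\textrm{Im}(f)$). First I would show that $M$ is an edge-cut of $P$. The restriction of $f$ to $P-M$ is an $(\tilde{H}_f-uv)$-colouring of $P-M$, and since $u$ and $v$ have degree $2$ in $\tilde{H}_f-uv$, that graph has no isolated vertex, so it is covered by this colouring; were $P-M$ connected, Lemma~\ref{lem:connected} would force $\tilde{H}_f-uv$ to be connected, contradicting that $uv$ is a bridge. (This is exactly the argument of Lemma~\ref{lem:properties}\emph{(d)}, applied to the $\tilde{H}_f$-colouring $f$.)

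Next I would pin down $M$ using the structure of $P$. Write $M=\partial_P S$. Since $M$ is a matching and $P$ is cubic, each vertex of $S$ keeps at least two of its three edges inside $P[S]$, and likewise for $V(P)\setminus S$; so $P[S]$ and $P[V(P)\setminus S]$ both have minimum degree at least $2$ and hence each contains a cycle. As $P$ has girth $5$ and only $10$ vertices, this forces $|S|=5$ and both $P[S]$ and $P[V(P)\setminus S]$ to be $5$-cycles, so $M$ is a perfect matching and $P-M$ is a disjoint union of two $5$-cycles $Z_1,Z_2$. Now $\tilde{H}_f-uv$ splits into components $C_u\ni u$ and $C_v\ni v$; by Lemma~\ref{lem:connected} each of $Z_1,Z_2$ is coloured inside a single component, and they cannot both land in $C_u$ (otherwise no edge of $P-M$ receives a colour in $E(C_v)$, impossible since $\textrm{Im}(f)=E(\tilde{H}_f)$ contains $E(C_v)$) nor both in $C_v$. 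So, after relabelling, $f(E(Z_1))\subseteq E(C_u)$ and $f(E(Z_2))\subseteq E(C_v)$.

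The contradiction is then extracted from $Z_1$. For any $p\in V(Z_1)$, the three edges of $P$ at $p$ are two edges of $Z_1$ and the unique $M$-edge at $p$, so $f(\partial_P p)$ contains $uv$ together with two colours lying in $E(C_u)$. Writing $f(\partial_P p)=\partial_{\tilde{H}_f}w$, the colour $uv$ forces $w\in\{u,v\}$, and as the two other colours lie in $C_u$ rather than $C_v$ we get $w=u$; hence those two colours are precisely the two edges of $C_u$ incident to $u$. Thus $f_V$ sends every vertex of $Z_1$ to $u$, so every edge of $Z_1$ is coloured by one of just two colours (the colour $uv$ being excluded, since its preimage is exactly $M$, disjoint from $E(Z_1)$). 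This is a proper $2$-edge-colouring of the $5$-cycle $Z_1$, impossible since $\chi'(C_5)=3$. I expect the main obstacle to be this structural step: recognising that $M$ must be a perfect matching whose removal leaves the two canonical $5$-cycles of $P$, and then spotting that one of those cycles inherits an over-determined colouring; the rest is bookkeeping with the definitions of $H_f$ and $\tilde{H}_f$.
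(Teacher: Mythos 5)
Your proof is correct and follows essentially the same route as the paper: show that $f^{-1}(uv)$ is a matching that is also an edge-cut of $P$, hence a perfect matching whose complement is two $5$-cycles, each of which is forced to be properly $2$-edge-coloured --- impossible for an odd cycle. The only differences are cosmetic improvements: you supply a proof of the structural fact that a matching which is an edge-cut of $P$ must be a perfect matching (which the paper merely asserts), and you replace the paper's symmetry-based ``without loss of generality'' labelling with an explicit component argument showing each $5$-cycle maps entirely to $u$ or entirely to $v$.
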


\begin{proof}
Let $f_V$ be the map induced by $f$ on the vertices of $P$ and, for contradiction, suppose that both $u$ and $v$ belong to $\textrm{Im}(f_V)$, which results in both vertices having degree $3$ in $\tilde{H}_f$, by Remark \ref{remark:1_3degrees}. Hence, all edges in $\partial_{H} u$ (and $\partial_{H} v$) belong to $\textrm{Im}(f)$, that is they belong to the edge-set of $\tilde{H}_f$. In particular, the edge $e=uv$ belongs to $\textrm{Im}(f)$. Let $\l_{1}$ and $\l_{2}$ be the two edges incident to $u$ in $\tilde{H}_f$ other than $uv$, and let $r_{1}$ and $r_{2}$ be the other two edges incident to $v$ in $\tilde{H}_f$. Since $e$ is an edge-cut and a matching of $\tilde{H}_f$, by Lemma \ref{lem:properties}, $f^{-1}(e)$ is an edge-cut and a matching of $P$. The only matchings of the Petersen graph  which are also edge-cuts are perfect matchings of $P$. Consequently, $f^{-1}(e)$ is a perfect matching of $P$, say $M$, which can be chosen arbitrarily due to the symmetry of the Petersen graph (in a more precise terminology we remark that the Petersen graph is 3-arc-transitive, see for example \cite{pet arc trans}).
The complement of $M$ in the Petersen graph consists of two disjoint $5$-cycles. 
Without loss of generality, by following the notation used in Figure \ref{figure lemma exactly 1 comp}, we can assume that:
\begin{enumerate}[(i)]
\item each edge $u_iv_i$ has colour $e$;
\item $f_V(u_i)=u$, for every vertex $u_i$ of the outer $5$-cycle; and
\item $f_V(v_i)=v$, for every vertex $v_i$ of the inner $5$-cycle.
\end{enumerate}    

It follows that all the edges in the outer $5$-cycle (similarly, inner $5$-cycle) should be alternately mapped to $\l_1$ and $\l_2$ (respectively, $r_{1}$ and $r_{2}$) by $f$. However, this is not possible since these two cycles have odd length. Hence, since $e \in \textrm{Im}(f)$ implies that at least one of $u$ and $v$ belongs to $\textrm{Im}(f_V)$, by Remark \ref{remark:1_3degrees} we conclude that exactly one of the vertices $u$ and $v$ belongs to $\textrm{Im}(f_V)$ and, the one which does not, has degree $1$. 
\end{proof}

\begin{figure}[h]
      \centering
      \includegraphics[width=0.6\textwidth]{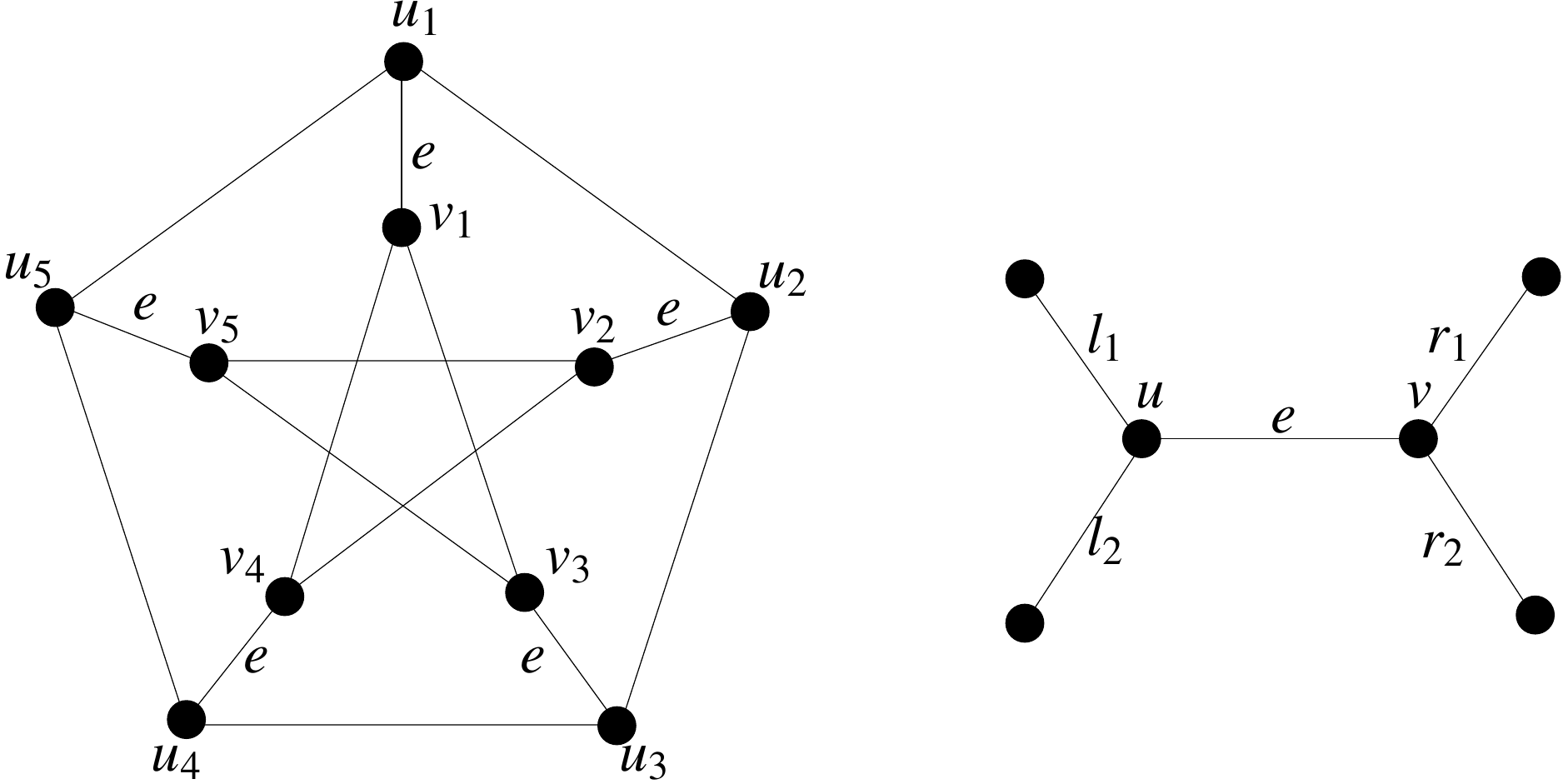}
      \caption{Steps from Lemma \ref{lemma exactly 1 comp}}
      \label{figure lemma exactly 1 comp}
\end{figure}

\begin{theorem}\label{thm:main}
Let $H$ be a connected graph such that $H\prec P$ and let $f$ be an $H$-colouring of $P$. Then, either $H=H_f\simeq P$ or $H_f\simeq S_{4}$.
\end{theorem}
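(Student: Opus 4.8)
The plan is to analyse the structure of $\tilde{H}_f$ using the two technical lemmas already established, namely Lemma \ref{lemma exactly 1 comp} (every bridge of $\tilde{H}_f$ has exactly one degree-$1$ endvertex) and Remark \ref{remark:1_3degrees} (every vertex of $\tilde{H}_f$ has degree $1$ or $3$, with the degree-$3$ vertices being precisely the used vertices). First I would consider the subgraph $J$ of $\tilde{H}_f$ induced by the used vertices, that is, by the degree-$3$ vertices; equivalently $J$ is obtained from $\tilde{H}_f$ by deleting all degree-$1$ vertices. Since by definition of $H$-colouring every vertex of $P$ maps to a used vertex of degree $3$, and $P$ is $3$-regular, $J$ must itself be a (possibly non-simple) graph in which every vertex has degree at most $3$; moreover $f$ restricts to a colouring-like map from $E(P)$ into $E(\tilde{H}_f)$ hitting all of $J$'s edges and all edges incident to its vertices. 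The first key claim I would establish is that $J$ is $2$-edge-connected (bridgeless): any bridge of $J$ would have to come from a bridge of $\tilde{H}_f$ between two used vertices, contradicting Lemma \ref{lemma exactly 1 comp}; one must be slightly careful because an edge of $\tilde{H}_f$ between two used vertices that is a bridge of $J$ need not be a bridge of $\tilde{H}_f$ (attaching trees of degree-$1$ vertices does not change bridge-ness, so in fact it is), so I would argue that since every degree-$1$ vertex is a leaf hanging off the "core", the bridges of $J$ are exactly the bridges of $\tilde{H}_f$ with both endpoints used — and there are none.

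The second step is a counting/degree argument on $J$. Every used vertex has degree $3$ in $\tilde{H}_f$, but some of its incident edges may go to degree-$1$ vertices and hence are not in $J$. Let $k$ be the number of used vertices whose three $\tilde{H}_f$-neighbours are all used (so they have degree $3$ in $J$) and analyse the rest. Here I would use Lemma \ref{lem:properties}(d): an edge-cut of $H_f$ whose removal leaves no isolated vertex pulls back to an edge-cut of $P$; since $P$ is $3$-edge-connected, edge-cuts of $\tilde{H}_f$ of size $1$ or $2$ (not isolating a vertex) are forbidden, forcing $J$ to be $3$-edge-connected in an appropriate sense. Combined with $J$ being subcubic, a $3$-edge-connected graph with maximum degree $3$ must actually be $3$-regular, so $J$ is a $3$-edge-connected cubic (multi)graph, and $f$ is effectively an $\tilde{H}_f$-colouring whose "used part" is this cubic graph $J$ together with pendant edges. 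The crucial point is then that $J$ itself colours $P$, i.e. $J \prec P$ after accounting for the pendant edges — here one observes that at a used vertex $v$ of degree $3$ in $J$, the three edges of $\partial_P u$ for $u$ mapping to $v$ must receive the three distinct colours $\partial_{\tilde H_f} v$, and pendant edges can only be used at vertices that are endpoints of a bridge, which we have just excluded; so in fact $H_f$ has \emph{no} pendant/unused vertex at all, giving $H_f = H$ and $\tilde H_f = H_f$, and $H$ is a $3$-edge-connected cubic graph with $H \prec P$.

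At this stage I would invoke the bound $|V(H)| \le |V(P)| = 10$. This follows because $f_V \colon V(P) \to V(H)$ is surjective onto the used vertices (all of them, by the previous paragraph) and, by Lemma \ref{lem:properties}(c), pulling back a perfect matching of $H$ gives a perfect matching of $P$ — more directly, one counts: $H$ is cubic with $|E(H)| = \tfrac{3}{2}|V(H)|$, and since $f$ is a proper edge-colouring of $P$ using only colours in $E(H)$ with every colour class nonempty (each edge of $H$ is in $\mathrm{Im}(f)$), we get $|E(H)| \le |E(P)| = 15$, hence $|V(H)| \le 10$. So $H$ is a cubic graph on at most $10$ vertices, $3$-edge-connected (in particular bridgeless), with $H \prec P$. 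Now apply the already-quoted Theorem \ref{theorem Vahan Petersen} (Mkrtchyan): if $H$ is a connected bridgeless cubic graph with $H \prec P$ then $H \simeq P$ — wait, that assumes $H$ is \emph{simple}; if $H$ has parallel edges one needs a separate small-cases analysis. This is where I expect the main obstacle. The remaining work is to rule out cubic multigraphs on $4, 6, 8$ vertices with parallel edges that could colour $P$, except for the one that survives: the multigraph $S_4$ on $4$ vertices. I would handle this by a short direct argument: a cubic multigraph $H$ with a digon (pair of parallel edges) $xy$; the two parallel edges form a $2$-edge-cut of $H$ unless $H$ has only these as edges incident outside, i.e. $H$ on $4$ vertices with structure forcing $H \simeq S_4$ — more carefully, if $H \ne S_4$ has parallel edges then either $H$ has a $2$-edge-cut (pull back via Lemma \ref{lem:properties}(d) to contradict $3$-edge-connectivity of $P$) or $H$ is $S_4$ itself; in the $S_4$ case one checks directly (or cites \cite{gmjp}) that $S_4 \prec P$ indeed holds. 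Putting it together: either $H$ is simple, whence by Mkrtchyan's theorem $H = H_f \simeq P$, or $H$ has parallel edges, whence $H_f \simeq S_4$. The delicate bookkeeping — confirming that no unused vertices survive, so that $H = H_f$ in the Petersen case, and pinning down exactly which multigraph with parallel edges occurs — will be the technically fussiest part, and I would lean on Lemmas \ref{lemma exactly 1 comp} and \ref{lem:properties}(d) repeatedly to kill all alternatives.
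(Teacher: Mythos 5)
Your first case (when $\tilde{H}_f$ is cubic) matches the paper: Lemma \ref{lemma exactly 1 comp} gives bridgelessness and Theorem \ref{theorem Vahan Petersen} finishes. The gap is in how you dispose of the non-cubic case. Your key step asserts that, by Lemma \ref{lem:properties}(d) and the $3$-edge-connectivity of $P$, edge-cuts of $\tilde{H}_f$ of size $1$ or $2$ (not isolating a vertex) are forbidden, so that the ``core'' $J$ is $3$-edge-connected, hence cubic, hence $H_f$ has no unused vertices. This does not follow: Lemma \ref{lem:properties}(d) only says that $f^{-1}(X)$ is \emph{an} edge-cut of $G$, not one of the same cardinality as $X$. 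The preimage of a single edge of $H_f$ can be a perfect matching of $P$ --- an edge-cut of size $5$ --- which is perfectly compatible with $P$ being $3$-edge-connected. (This is precisely why Lemma \ref{lemma exactly 1 comp} needs the parity argument on the two $5$-cycles of $P - M$ rather than a connectivity argument.) The same objection applies to $2$-edge-cuts.

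The failure is not cosmetic: the second outcome of the theorem, $H_f\simeq S_4$, is exactly a graph whose splitted image has a pendant (unused) degree-$1$ vertex, whose core $J$ has a vertex of degree $2$, and which contains a $2$-edge-cut not isolating any vertex. So your argument, if it worked, would wrongly exclude $S_4$ altogether; and your attempt to recover it at the end by classifying \emph{cubic} multigraphs with parallel edges cannot succeed, because $S_4$ is not cubic (it has a degree-$1$ vertex). What is missing is the actual content of the paper's proof for the non-cubic case: fix a pendant edge $e=uv$ with $d(v)=1$, $d(u)=3$, observe that $a,b$ (the other two edges at $u$) cannot close a $3$-edge-colourable graph, then trace the colouring explicitly around the Petersen graph to show first that $e$ colours exactly one edge of $P$ (Claim A) and then that the two remaining colour-pairs at the neighbours $w,z$ of $u$ must coincide (Claim B), which forces $\tilde{H}_f\simeq S_4$ with a unique degree-$1$ vertex, whence $\tilde{H}_f=H_f$. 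None of this is replaceable by the edge-cut bookkeeping you propose.
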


\begin{proof}
First, assume that $\tilde{H}_f$ is cubic. By Remark \ref{remark:1_3degrees}, $\tilde{H}_f=H_f$, and by Lemma \ref{lemma exactly 1 comp}, it follows that it is bridgeless, and so, by Theorem \ref{theorem Vahan Petersen}, $H=H_f \simeq P$. 
Hence, we can assume that $\tilde{H}_f$ is not cubic, and so, by Remark \ref{remark:1_3degrees}, it admits a vertex $v$ whose degree is 1 and is adjacent to a vertex $u$ whose degree is $3$. 
Let $e=uv$, and let the other two edges in $\tilde{H}_f$ incident to $u$ be denoted by $a$ and $b$. By Lemma \ref{lemma exactly 1 comp}, the edges $a$ and $b$ cannot share a further endvertex other than $u$, otherwise $\tilde{H}_f$ is isomorphic to the connected $3$-edge-colourable graph on four vertices with two degree 1 vertices and two degree 3 vertices, thus implying that the Petersen graph $P$ is $3$-edge-colourable by Lemma \ref{lem:properties}, a contradiction. Therefore, $a$ and $b$ share exactly one endvertex ($u$), and we let $w$ and $z$ be the two distinct vertices in $V(\tilde{H}_f) \setminus \{v\}$ such that $a=uw$ and $b=uz$. 

Without loss of generality, assume that the spoke $u_{1}v_{1}$ of $P$ is coloured by $e=uv$. Since $v$ does not belong to $\textrm{Im}(f_V)$, $f_{V}(u_{1})=f_{V}(v_{1})=u$, and so we can assume further that $f(u_{1}u_{5})=f(v_{1}v_{4})=a$ and $f(u_{1}u_{2})=f(v_{1}v_{3})=b$, as in Figure \ref{figure proof theorem1 1}. The case when $f(u_{1}u_{5})=f(v_{1}v_{3})=a$ and $f(u_{1}u_{2})=f(v_{1}v_{4})=b$ is equivalent by the symmetry of $P$. Since $P$ is not 3-edge-colourable, $u$ cannot be the only vertex in $\textrm{Im}(f_V)$. Hence, at least one of $w$ and $z$ must also have degree equal to 3 in $\tilde{H}_f$. By Lemma \ref{lemma exactly 1 comp}, since $\tilde{H}_{f}$ cannot admit a bridge with both of its endvertices having degree 3, the vertices $w$ and $z$ must both have degree $3$ in $\tilde{H}_{f}$.\\
\begin{figure}[h]
      \centering
      \includegraphics[width=0.6\textwidth]{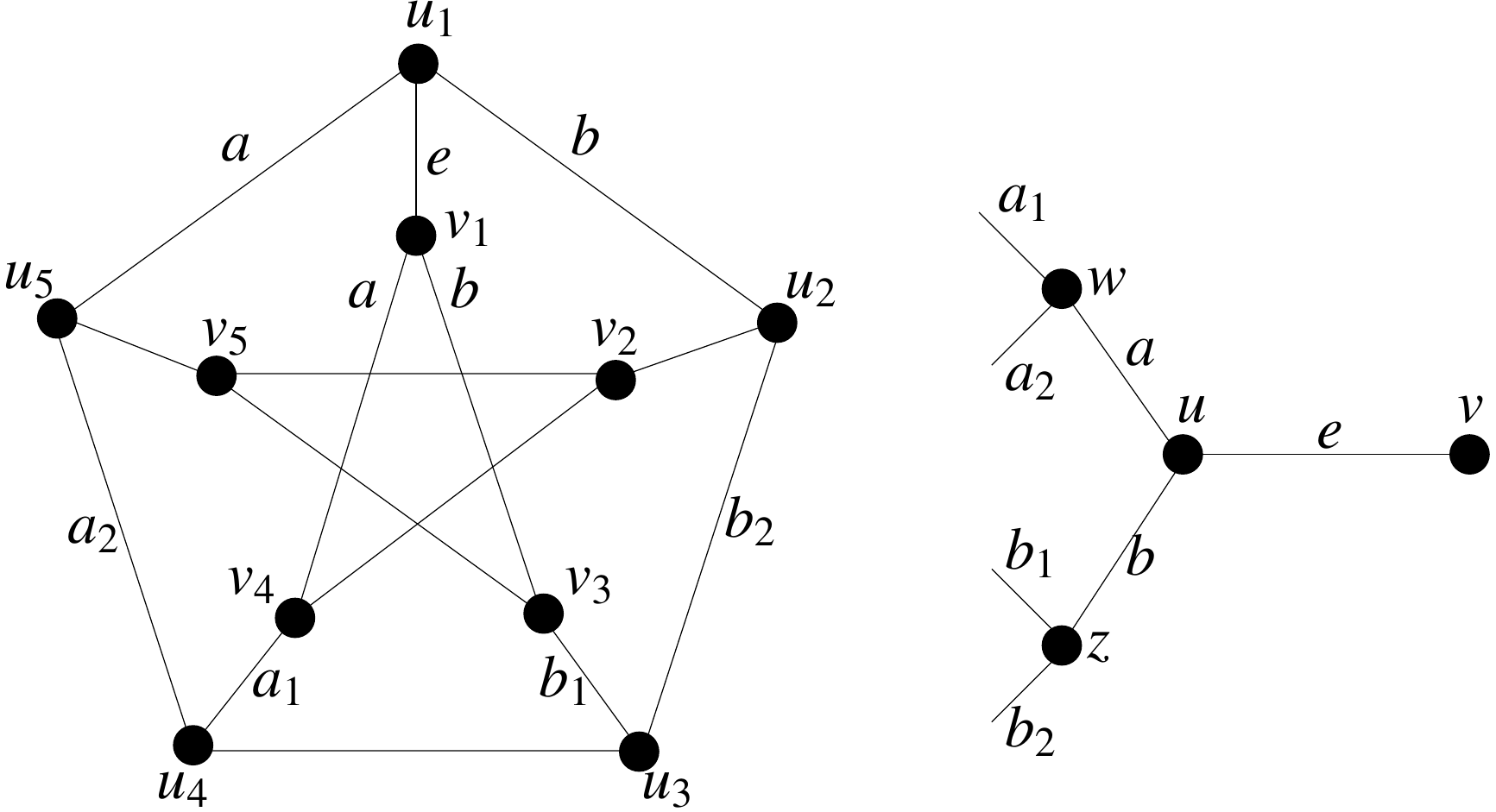}
      \caption{The edges coloured $a_{1},a_{2},b_{1},b_{2}$ in $P$}
      \label{figure proof theorem1 1}
\end{figure}

\noindent\textbf{Claim A.} $u_1v_1$ is the unique edge with colour $e$.

\noindent\emph{Proof of Claim A.} 
Suppose there is another edge $m$ in $P$ which is coloured by $e$. Either $m$ is at distance 1 from $u_1v_1$ or it is at distance 2. Then, there exists $C$, a $5$-cycle or a $6$-cycle, respectively, of $P$ passing through both $u_1v_1$ and $m$. Hence, the other edges in $C$ are coloured by $a$ or $b$, since they are all incident with some edge which is coloured by $e$. 

If $C$ is a $6$-cycle, the vertices of $C$ that are not incident with $u_1v_1$ or $m$, have two of their incident edges coloured by $a$ and $b$, implying that they are mapped by $f_V$ into $u$, and thus their third incident edges, say $l_1$ and $l_2$ respectively, are also coloured by $e$. This is a contradiction, since $l_1$ and $l_2$ are edges of $P$ incident to a common vertex. 
If $C$ is a $5$-cycle, there exists a vertex of $C$ having two of its incident edges coloured by $a$ and $b$, implying that it is mapped by $f_V$ into $u$ and thus its third incident edge is also coloured by $e$. But in this case, there exists a $5$-cycle $C'$ of $P$ whose edges are incident to some edge coloured by $e$, implying that all edges of $C'$ must be coloured by $a$ and $b$. This is a contradiction since $C'$ is an odd cycle.\,\,\,{\tiny$\blacksquare$}
\\

Let $a_{1}$ and $a_{2}$ be the two edges in $\tilde{H}_f - a$ which are incident to the vertex $w$, and let $b_{1}$ and $b_{2}$ be the two edges in $\tilde{H}_f- b$ which are incident to the vertex $z$. Since no edge but $u_1v_1$ has colour $e$ in $P$, all the edges incident to an edge with colour $a$ (similarly, $b$) receive colours $a_{1}$ and $a_{2}$ (respectively, $b_{1}$ and $b_{2}$). Hence, without loss of generality we can assume:
\begin{enumerate}[(i)]
\item $f(u_{4}v_{4})=a_{1}$ and $f(u_{4}u_{5})=a_{2}$; and
\item $f(u_{3}v_{3})=b_{1}$ and $f(u_{2}u_{3})=b_{2}$,
\end{enumerate}
as in Figure \ref{figure proof theorem1 1}. \\

\noindent\textbf{Claim B.} $a_{1}=b_{1}$ and $a_{2}=b_{2}$.

\noindent\emph{Proof of Claim B.} Due to the edge $u_{3}u_{4}$ in $P$, there exists an edge $g$ in $\tilde{H}_f$ such that $a_1,a_2,g$ are incident with a common vertex, and $b_1,b_2,g$ are incident with a common vertex. Moreover, since $f(v_{1}v_{4})=f(u_{1}u_{5})=a$, we have $f(v_{2}v_{4})=a_{2}$ and $f(u_{5}v_{5})=a_{1}$. Similarly, $f(v_{3}v_{5})=b_{2}$ and $f(u_{2}v_{2})=b_{1}$. This means that $a_{1}$ and $b_{2}$ share a common vertex in $\tilde{H}_f$, and similarly, $a_{2}$ and $b_{1}$ share a common vertex in $\tilde{H}_f$. Since the vertices of $\tilde{H}_{f}$ can have degree 1 and 3, the only way how the above statements can be satisfied is by having $\{a_{1},a_{2}\}=\{b_{1},b_{2}\}$. In particular, since $f(u_{5}v_{5})=a_{1}$ and $f(v_{3}v_{5})=b_{2}$, $b_{2}$ must be equal to $a_{2}$, proving our claim.\,\,\,{\tiny$\blacksquare$}\\

By Claim B,  $\tilde{H}_f \simeq S_{4}$, and since $\tilde{H}_f$ has a unique vertex of degree 1, it cannot be obtained by splitting unused vertices of some other graph, and so, $\tilde{H}_f=H_f$, as required (in Figure \ref{figure S4 P} an $S_4$-colouring of $P$ is represented).
\end{proof}
\begin{figure}[h]
      \centering
      \includegraphics[width=0.6\textwidth]{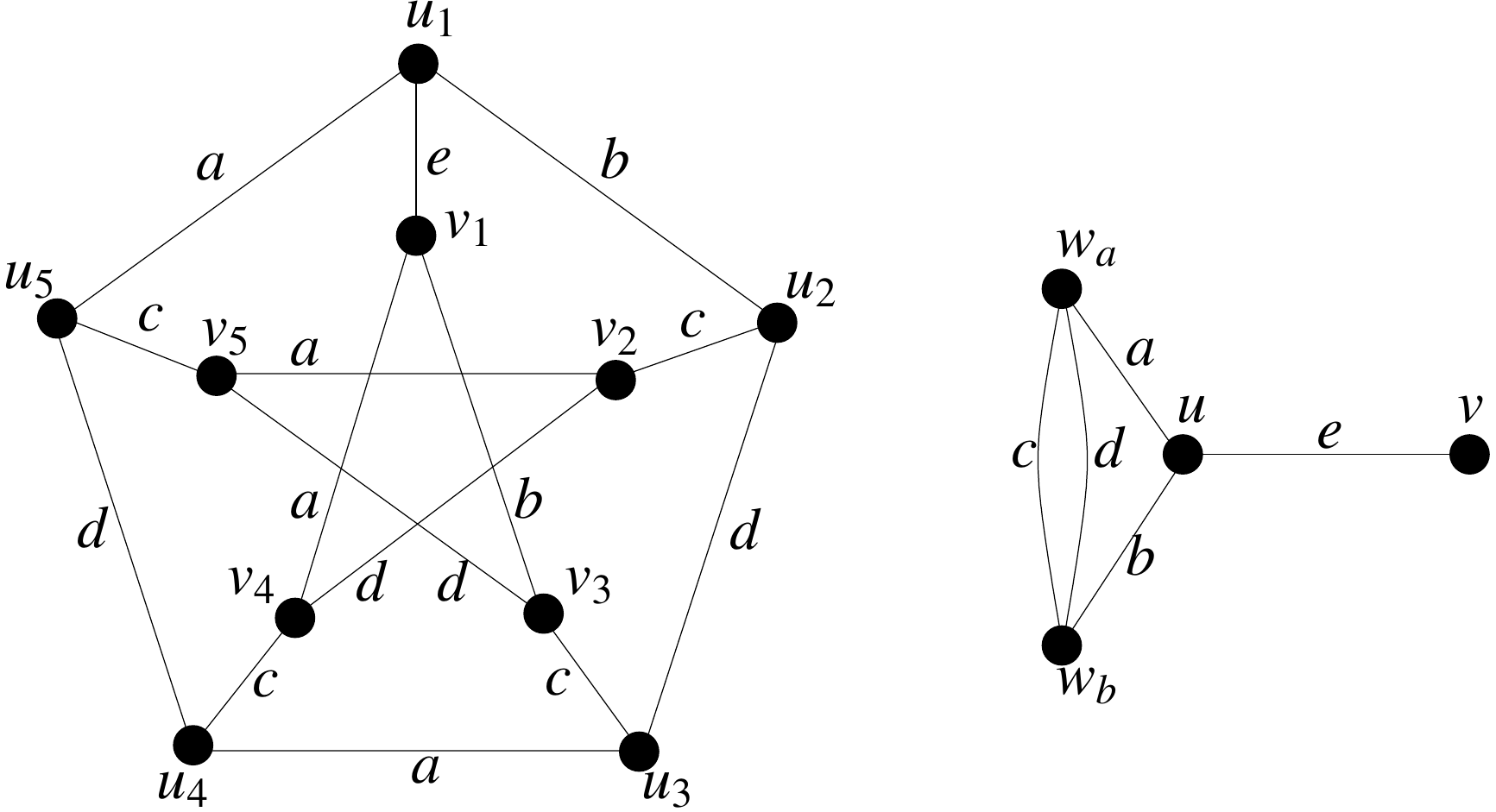}
      \caption{An $S_{4}$-colouring of $P$}
      \label{figure S4 P}
\end{figure}

As before, since the Petersen graph is bridgeless and cubic, the following holds.

\begin{corollary}\label{cor:main}
If there exists a connected graph $H$ colouring all bridgeless cubic graphs, then either $H \simeq P$ or $H$ exposes $S_{4}$.
\end{corollary}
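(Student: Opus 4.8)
The plan is to deduce Corollary \ref{cor:main} as a quick consequence of Theorem \ref{thm:main} combined with Lemma \ref{lem:connected}. Suppose $H$ is a connected graph that colours every bridgeless cubic graph. In particular $H \prec P$, so Theorem \ref{thm:main} applies: for any $H$-colouring $f$ of $P$, either $H = H_f \simeq P$, in which case we are done, or $H_f \simeq S_4$. So the remaining work is entirely in the second case, and the claim to establish is that in this case $H$ exposes $S_4$ in the sense defined in the Introduction (three degree-$3$ vertices and a fourth vertex of arbitrary degree, with the induced subgraph on these four vertices isomorphic to $S_4$).

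First I would recall how $H_f$ sits inside $H$: by definition $H_f = H[\mathrm{Im}(f)]$ is the subgraph of $H$ edge-induced by the colours actually used, and every vertex of $\mathrm{Im}(f_V)$ has the same degree in $H_f$ as in $H$ (since all edges of $H$ incident to a used vertex must appear as colours of the three edges at the corresponding vertex of the cubic graph $P$). In the case $H_f \simeq S_4$, the three degree-$3$ vertices of $S_4$ are exactly the used vertices (the degree-$1$ vertex is unused, as noted at the end of the proof of Theorem \ref{thm:main}), so in $H$ these three vertices still have degree $3$ and induce, together with a fourth vertex, a copy of $S_4$. The one point that needs a sentence is why $H$ contains \emph{no} vertices or edges beyond $\tilde H_f = H_f$: this is precisely the remark recorded just before Definition \ref{def:htilde}, namely that when $H_f$ has no unused vertex after splitting, no connected $H' \neq H_f$ can yield the same colouring; equivalently, Lemma \ref{lem:connected} forces $\mathrm{Im}(f) $ to span a connected subgraph, and any further vertex or edge of $H$ attached to $H_f$ would either raise the degree of a used vertex (impossible) or be disconnected from $H_f$ (impossible). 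Hence $H = H_f \simeq S_4$ literally, so in particular $H$ exposes $S_4$ (taking $X = V(H)$).

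I expect the corollary to be essentially immediate; there is no real obstacle, only the bookkeeping of translating "$H_f \simeq S_4$" into "$H$ exposes $S_4$." The only mild subtlety is being careful about the distinction between $H$, $H_f$ and $\tilde H_f$: the definition of "exposes" is phrased so as to cover the situation where $H$ is \emph{larger} than a copy of $S_4$ (e.g.\ $S_{10}$ and $S_{12}$, which is why the Introduction points out they each expose $S_4$ three times), but Theorem \ref{thm:main} actually gives the stronger conclusion $H = H_f$, so the four vertices in question are all of $V(H)$ and there is nothing extra to account for. One should just state this cleanly rather than appealing to the more permissive "exposes" language, and note that the proof of Theorem \ref{thm:main} already does the work of showing $\tilde H_f = H_f$ in the $S_4$ case.
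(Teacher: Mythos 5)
Your reduction to Theorem \ref{thm:main} is the right (and the paper's) starting point, but the final step contains a genuine error: in the case $H_f \simeq S_4$ you claim that $H = H_f$, and this is false. The end of the proof of Theorem \ref{thm:main} only establishes $\tilde{H}_f = H_f$, i.e.\ that the degree-$1$ vertex of $H_f$ was not obtained by splitting; it says nothing about the degree of that vertex in $H$. That vertex is \emph{unused} (it does not lie in $\textrm{Im}(f_V)$), so its degree in $H$ is unconstrained: the procedure described just before Definition \ref{def:htilde} attaches arbitrary further structure of $H$ precisely at unused vertices, and the remark you invoke (``if $H_f$ has no unused vertex, then no connected $H'$ different from $H$ can be obtained'') does not apply here, because $H_f \simeq S_4$ \emph{does} have an unused vertex. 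Concretely, $S_{10}$ and $S_{12}$ each admit a colouring of $P$ whose image is an exposed copy of $S_4$, and for such a colouring $H \neq H_f$. This is exactly why the corollary is stated with ``$H$ exposes $S_4$'' rather than ``$H \simeq S_4$''; your closing suggestion to drop the ``more permissive exposes language'' would turn the statement into a false one.

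The correct way to finish the second case is: the three degree-$3$ vertices of $H_f \simeq S_4$ belong to $\textrm{Im}(f_V)$ and therefore have degree $3$ in $H$ as well; the fourth vertex may have arbitrary degree in $H$; and the subgraph of $H$ induced by these four vertices is still isomorphic to $S_4$, since any additional edge of $H$ joining two of them would have to be incident to one of the three used vertices (loops being excluded), contradicting that those vertices have degree $3$ in $H$. Hence $H$ exposes $S_4$. With that repair your argument matches the paper's, which treats the corollary as immediate from Theorem \ref{thm:main} together with the fact that $P$ is itself a bridgeless cubic graph.
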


To conclude this section we provide a generalisation of the following two theorems, proved in \cite{VahanPetersen} and \cite{HakMkr}.

\begin{theorem}[Mkrtchyan, 2013 \cite{VahanPetersen}]
 Let $H$ be a connected cubic graph with $H \prec S_{10}$. Then $H \simeq S_{10}$.
 \label{thm:MkrtchyanS10}
\end{theorem}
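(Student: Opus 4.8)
The plan is to adapt the strategy that was used for Theorem~\ref{thm:main} (the $H\prec P$ case) to the setting $H\prec S_{10}$, exploiting the rigidity of the Sylvester graph in place of that of the Petersen graph. First I would observe that, by Lemma~\ref{lem:connected} and the discussion preceding Definition~\ref{def:htilde}, it suffices to analyse $\tilde H_f$ and $H_f$, and that if $H$ is assumed cubic then $\tilde H_f = H_f$ by Remark~\ref{remark:1_3degrees}. Since $H$ is cubic and connected, the aim is to show $H_f = H \simeq S_{10}$, i.e.\ that the only cubic graph admitting an $S_{10}$-colouring of the Sylvester graph is $S_{10}$ itself (up to isomorphism).

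The key structural input I would isolate is an analogue of Lemma~\ref{lemma exactly 1 comp}: namely, that $\tilde H_f$ cannot contain a bridge $uv$ with both endvertices of degree $3$. To prove such a claim one notes that if $e=uv$ is such a bridge, then $e$ is simultaneously a matching and an edge-cut of $\tilde H_f = H_f$; by Lemma~\ref{lem:properties}\emph{(a)} and \emph{(d)}, $f^{-1}(e)$ is then a matching and an edge-cut of $S_{10}$, hence a perfect matching of $S_{10}$ whose deletion disconnects it. The main obstacle of the whole proof lives here: unlike the Petersen graph, $S_{10}$ is not $3$-arc-transitive, so I cannot simply invoke symmetry to fix one such perfect matching and run a parity argument on two odd cycles. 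Instead I would enumerate (or classify up to the automorphism group of $S_{10}$) the perfect matchings of $S_{10}$ whose complement is disconnected, examine the cycle structure of each complement, and in each case derive a contradiction with the existence of a proper $2$-edge-colouring of each complementary component induced by $f$ (the two non-bridge edges at $u$, respectively at $v$, must alternate along each such component). Because $S_{10}$ has odd order / odd cycles in the relevant complements, each case should collapse to an odd-cycle parity obstruction exactly as in Lemma~\ref{lemma exactly 1 comp}.

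With the "no heavy bridge" lemma in hand, the argument then mirrors the proof of Theorem~\ref{thm:main}. If $\tilde H_f$ is cubic, then it is bridgeless by the lemma just proved, and I would invoke the known classification of cubic bridgeless graphs admitting an $S_{10}$-colouring of $S_{10}$ — or, more self-containedly, use Lemma~\ref{lem:properties}\emph{(b)} together with the structure of $S_{10}$ (its chromatic index, its unique perfect matching or small number thereof, its girth) to force $\tilde H_f = H_f = H \simeq S_{10}$. Since $H$ is assumed cubic, the "non-cubic $\tilde H_f$" branch of the Theorem~\ref{thm:main} proof does not even arise: a degree-$1$ vertex in $\tilde H_f$ would have to arise from an unused vertex of $H_f$, but as $H$ is cubic and $H_f$ is an edge-induced subgraph of a cubic graph, any vertex of $H_f$ of degree $3$ is used and the unused vertices would be of degree $1$ or $2$, contradicting that $H$ itself is cubic unless $H_f = H$ with no unused vertices. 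So the real content is precisely the bridge lemma plus the identification of the cubic bridgeless image, and I would budget essentially all of the work for the case analysis in the bridge lemma.

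Alternatively, and perhaps more cleanly, I would try to push through directly: take a shortest cycle of $S_{10}$ (its girth is small), track how $f$ colours it using only the $\le 3$ colours at each used vertex of $H_f$, and use Claim~A/Claim~B–style local propagation (an edge of a given colour forces the colours of all incident edges, modulo the two alternatives at a degree-$3$ vertex) to show that $H_f$ must contain a copy of every local configuration present in $S_{10}$, and conversely that its vertex count cannot exceed $10$; combined with connectivity (Lemma~\ref{lem:connected}) and $\chi'(S_{10}) \le \chi'(H)$ this pins down $H_f \simeq S_{10}$. Whichever route, the delicate point that I expect to consume the proof is replacing the one-line symmetry appeal of Lemma~\ref{lemma exactly 1 comp} by an honest finite case check over the automorphism classes of perfect matchings / short cycles of the Sylvester graph.
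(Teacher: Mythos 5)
There is a genuine gap, and it sits exactly where you placed all the weight of the argument. First, note that the paper does not actually prove this statement: it is imported verbatim from Mkrtchyan \cite{VahanPetersen} and used as a black box to derive Corollary \ref{cor1:S10}, so there is no internal proof to mirror. More importantly, your proposed key lemma --- the ``no heavy bridge'' analogue of Lemma \ref{lemma exactly 1 comp}, asserting that $\tilde H_f$ cannot contain a bridge with both endvertices of degree $3$ --- is false in the $S_{10}$ setting, and provably so: the theorem's own conclusion is $H\simeq S_{10}$, and $S_{10}$ itself contains three bridges (the edges from its central vertex to the three branches) whose endvertices all have degree $3$. Concretely, the step ``$f^{-1}(e)$ is a matching and an edge-cut of $S_{10}$, hence a perfect matching of $S_{10}$'' breaks down twice over. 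The Petersen graph has the special property that its only matchings which are edge-cuts are perfect matchings; $S_{10}$ does not, since each of its bridges is a one-edge matching that is an edge-cut. Worse, $S_{10}$ has \emph{no} perfect matching at all (deleting the central vertex leaves three odd components), so the class of configurations you propose to enumerate is empty, while the configurations that actually occur (preimages equal to a single bridge of $S_{10}$, as in the identity colouring) admit no odd-cycle parity obstruction whatsoever. Had your lemma been true, it would combine with Petersen's theorem and Lemma \ref{lem:properties}\emph{(c)} to show that no cubic $H$ colours $S_{10}$ at all, contradicting $S_{10}\prec S_{10}$.

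Your fallback route (``invoke the known classification of cubic bridgeless graphs admitting an $S_{10}$-colouring of $S_{10}$'') inherits the same problem, since the image cannot be bridgeless, and the final ``local propagation'' sketch is too vague to assess as a proof. A workable argument must instead exploit the actual rigidity of $S_{10}$: for instance, that the doubled edge in each branch forces a doubled edge in $H$ (two parallel edges of $G$ must map to two parallel edges of $H$ between the images of their endvertices, by the degree count used in the proof of Proposition \ref{prop:S12+kM}), that the absence of a perfect matching in $S_{10}$ propagates to $H$ via Lemma \ref{lem:properties}\emph{(c)}, and that the three branches together with the cut-vertex structure around the central vertex pin down the image --- which is essentially the line taken in \cite{VahanPetersen} and \cite{sylvester}, not an adaptation of the Petersen-symmetry parity argument.
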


\begin{theorem}[Hakobyan \& Mkrtchyan, 2019 \cite{HakMkr}]
Let $H$ be a connected cubic graph with $H \prec S_{12}$. Then, either $H \simeq 
S_{10}$ or $H \simeq  S_{12}$.
\label{thm:MkrtchyanS12}
\end{theorem}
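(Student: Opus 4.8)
The plan is to transplant the argument for Theorem~\ref{thm:main} from the Petersen graph to $S_{12}$, and in fact to prove the slightly stronger statement that any connected $H$ with $H\prec S_{12}$ satisfies $H=H_f\simeq S_{10}$ or $H=H_f\simeq S_{12}$; the stated cubic version is then a special case. Fix an $H$-colouring $f$ of $S_{12}$ and pass to $H_f$ and to its splitted image $\tilde{H}_f$. First I would collect the elementary facts. Since $S_{12}$ is cubic, Remark~\ref{remark:1_3degrees} applies, so every vertex of $\tilde{H}_f$ has degree $1$ or $3$. Since $S_{12}$ exposes $S_4$, it is not $3$-edge-colourable — in an exposed copy the two parallel edges force the two edges from the apex to their endvertices to receive the same colour — so $\chi'(S_{12})=4$, and Lemma~\ref{lem:properties}(b) gives $\chi'(H)\geq 4$; in particular $H$ is of class $2$. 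I would also set up the transfer of small edge-cut structure between $S_{12}$ and $H_f$ by Lemma~\ref{lem:properties}(c),(d) and Lemma~\ref{lemma matching pm}, noting that $S_{12}$ has three vertex-disjoint triangles (each with one doubled edge) attached to the rest of the graph by three bridges, one per exposed copy of $S_4$.

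The heart of the proof is a local analysis around each of the three exposed copies of $S_4$ in $S_{12}$, entirely parallel to Claims~A and~B in the proof of Theorem~\ref{thm:main}. Let $x$ and $y$ be the endvertices of the doubled edge of one such copy and let $z$ be its apex (joined by single edges to $x$ and $y$, and by one further edge to the rest of $S_{12}$). Properness of $f$ at $x$ and at $y$ forces the two parallel edges $xy$ of $S_{12}$ to receive colours $\beta,\gamma$ of $H$, and then $f(zx)$ and $f(zy)$ to receive colours $\alpha,\delta$ with $\alpha,\beta,\gamma$ pairwise distinct, $\delta,\beta,\gamma$ pairwise distinct, and $\alpha\neq\delta$. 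Since $H$ is loopless, $f_V(x),f_V(y),f_V(z)$ are three distinct vertices of $H$: the pair $\beta,\gamma$ forms a doubled edge of $H$ between $f_V(x)$ and $f_V(y)$, the colours $\alpha$ and $\delta$ join $f_V(z)$ to $f_V(x)$ and to $f_V(y)$, and the remaining edge at $f_V(z)$ is the image of the edge leaving $z$. Thus each of the three exposed copies forces, inside $H_f$, an exposed copy of $S_4$ whose two doubled-edge vertices and whose apex send no edge of $H$ outside the gadget apart from the single edge leaving the apex. I expect this step to be routine once the relevant colours are named, the only care being to exclude the degenerate identifications among $f_V(x),f_V(y),f_V(z)$, which is handled by the no-loop and odd-cycle considerations already used for Theorem~\ref{thm:main}.

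It then remains to glue the three gadgets together. Since each gadget meets the rest of $H_f$ only through the single edge at its apex, $H_f$ is three pendant $S_4$-gadgets attached by bridges to a ``core'' built from the images of the three apices and of the part of $S_{12}$ lying outside the three copies; analysing how $f_V$ maps that part of $S_{12}$ — using once more that adjacent vertices of $S_{12}$ cannot have equal $f_V$-images because $H$ is loopless, together with Lemma~\ref{lem:properties}(d) to control the relevant cut — should pin the core down to exactly the two configurations occurring in $S_{10}$ and in $S_{12}$, leaving no unused vertex of $H_f$. Since $S_{10}$ and $S_{12}$ are cubic, $\tilde{H}_f=H_f$ is then cubic and, having no unused vertex and $H$ being connected, equals $H$; hence $H\simeq S_{10}$ or $H\simeq S_{12}$, which in particular settles the cubic case. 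I expect the genuine obstacle to be precisely this global step: carrying out the full case analysis of the admissible ways the three gadgets and the core can be arranged under a proper $H$-colouring, and in particular ruling out the ``collapsed'' arrangements in which two gadgets would be forced onto the same vertices of $H$ — the point where the no-loop and parity arguments must be pushed hardest.
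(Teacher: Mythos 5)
First, a point of logistics: the paper does not prove this statement at all --- Theorem~\ref{thm:MkrtchyanS12} is quoted from Hakobyan and Mkrtchyan \cite{HakMkr}, and the paper then uses it (in that direction) to derive the more general Corollary~\ref{cor1:S12}. Your plan runs the logic the other way, proving the general statement directly and specialising to cubic $H$; that is not circular, but it means you are reproving an external result rather than matching an argument in this paper. The closest internal analogue is Proposition~\ref{prop:S12+kM}, which carries out exactly your strategy for $S_{12}+kM$ with $k\geq 1$. Your local analysis is correct and is essentially the $k=0$ case of the opening of that proof: properness at $v^i$, $w^i$, $u^i$ and the degree constraint $d_H(f_V(x))=d_G(x)$ do force $f_V(v^i)$, $f_V(w^i)$, $f_V(u^i)$ to be three distinct vertices carrying an exposed copy of $S_4$ in $H_f$, with only $f(r_1^i)$ leaving it.

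The genuine gap is the global step, which you explicitly defer, and it cannot be closed by transplanting the paper's argument. In Proposition~\ref{prop:S12+kM} the decisive Claim~B (that the images of the three triangle edges $z^1z^2$, $z^2z^3$, $z^1z^3$ induce a $C_3$, i.e.\ that the triangle does not collapse onto a single vertex of $H$) is proved by the degree count $2(k+1)+1>k+3$, which holds only for $k\geq 1$. At $k=0$ this inequality becomes an equality-adjacent failure, and for good reason: the collapsed configuration is actually realisable and is precisely what produces the $S_{10}$ outcome (all three $f_V(z^i)$ equal to the central vertex of the Sylvester graph, with the three triangle edges mapping to its three incident edges). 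So the case analysis you label ``the genuine obstacle'' --- enumerating the possibilities for $H[f(\{z^1z^2,z^2z^3,z^1z^3\})]$ (a $C_3$, a $K_{1,3}$, or a degree-$3$ vertex with a doubled edge), checking each against the requirement that every $f(r_1^i)$ be incident to both $f(z^iz^j)$ and $f(z^iz^k)$, ruling out the mixed/degenerate identifications, and verifying edge-disjointness of the three gadget images without the $k\geq 1$ counting trick --- is not a routine finishing move but the entire content distinguishing the two conclusions of the theorem. As written, the proposal establishes the local structure and correctly identifies where the work lies, but does not prove the statement.
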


More specifically, in the same way as Theorem \ref{thm:main} generalises Theorem \ref{theorem Vahan Petersen}, the next corollaries generalise previous results by removing the regularity assumption on the graph $H$. 

\begin{cor}\label{cor1:S10}
Let $H$ be a connected graph with $H \prec S_{10}$. Then, $H\simeq S_{10}$.
\end{cor}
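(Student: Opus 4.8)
The plan is to mimic closely the proof of Theorem~\ref{thm:main}, replacing the Petersen graph $P$ by the Sylvester graph $S_{10}$ and exploiting the key structural facts about $S_{10}$: it is cubic, connected, bridgeless (so Lemma~\ref{lem:properties}\emph{(d)} has bite), and — crucially — it is \emph{not} $3$-edge-colourable, so that by Lemma~\ref{lem:properties}\emph{(b)} no graph $H$ with $H\prec S_{10}$ can be $3$-edge-colourable either. First I would set up $\tilde H_f$ and observe, exactly as in Remark~\ref{remark:1_3degrees}, that every vertex of $\tilde H_f$ has degree $1$ or $3$. If $\tilde H_f$ is cubic then $\tilde H_f=H_f$, and one needs the analogue of Lemma~\ref{lemma exactly 1 comp} — that $\tilde H_f$ has no bridge with both endvertices of degree $3$ — to conclude via Lemma~\ref{lem:properties}\emph{(d)} that $H_f$ is bridgeless and cubic, whence $H_f\prec S_{10}$ forces $H_f\simeq S_{10}$ by Theorem~\ref{thm:MkrtchyanS10}; since $S_{10}$ has no unused vertex, $H=H_f$.

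The heart of the argument is the non-cubic case: $\tilde H_f$ has a degree-$1$ vertex $v$ adjacent to a degree-$3$ vertex $u$, with the two other edges $a,b$ at $u$. As in Theorem~\ref{thm:main} one rules out $a,b$ sharing a second endvertex (that would make $S_{10}$ $3$-edge-colourable), so $a=uw$, $b=uz$ with $w\neq z$; and since $S_{10}$ is not $3$-edge-colourable, $u$ cannot be the only used vertex, so (using the bridge lemma) $w$ and $z$ both have degree $3$. Then one argues, via the concrete cycle structure of $S_{10}$ around a fixed preimage of the colour $e=uv$, that there is essentially only one way to propagate the colours, forcing $\tilde H_f\simeq S_4$; but $S_4\prec S_{10}$ would give $S_4\prec S_{10}\prec$ (some bridgeless cubic graph needing four colours) — more directly, one should check that $S_4$ does \emph{not} colour $S_{10}$, so this case is impossible and the cubic case is the only survivor. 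Alternatively, since $S_{10}$ exposes $S_4$ three times, an $S_4$-colouring of $S_{10}$ might actually exist, in which case the statement of the corollary would be false; so the safer route — and the one I would actually take — is to note that $H_f$, being a connected graph with vertices only of degrees $1$ and $3$ and with at most one(?) degree-$1$ vertex, together with $H_f\prec S_{10}$, must via Lemma~\ref{lem:properties}\emph{(d)} and the $2$-edge-connectivity / girth properties of $S_{10}$ be forced to be bridgeless and hence cubic, reducing everything to Theorem~\ref{thm:MkrtchyanS10}.

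Concretely, the cleanest line I expect the paper to take is: run the proof of Theorem~\ref{thm:main} verbatim with $S_{10}$ in place of $P$, using that $S_{10}$ is $3$-arc-transitive (or at least edge-transitive) to choose the preimage of $e$ freely, that $S_{10}$ is not $3$-edge-colourable, and that the relevant short cycles of $S_{10}$ through a chosen edge have the parities needed to obstruct the $S_4$ outcome. The analogue of Claim~A (the colour $e$ appears on a unique edge, or at least on a matching of a certain kind) and Claim~B ($a_1=b_1$, $a_2=b_2$) should go through with the same local case analysis on $5$- and $6$-cycles — here the specific neighbourhood structure of $S_{10}$ replaces that of $P$ — and the non-cubic scenario collapses, leaving $H\simeq S_{10}$.

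The main obstacle, and the step I would scrutinise most carefully, is precisely this last point: showing that the non-cubic case genuinely cannot occur. In the Petersen case the odd-cycle parity argument did the work because $P$'s cycle space is so rigid; for $S_{10}$ one must verify the corresponding parity/cycle obstructions by hand (there are $5$-cycles in $S_{10}$, and one must check how colours $a,b$ and then $a_1,a_2,b_1,b_2$ are forced around them), and one must be sure that no $S_4$-colouring of $S_{10}$ slips through — i.e. that the degree-$1$ vertex of a putative $\tilde H_f\simeq S_4$ leads to a genuine contradiction with $S_{10}$ not being $3$-edge-colourable or with its local structure. If instead $S_4$ \emph{does} colour $S_{10}$, the corollary as stated would be salvaged only by the uniqueness clause "$H_f\simeq S_4$ is excluded because $S_4$ has an unused-free obstruction" — so pinning down this dichotomy is where the real care is needed.
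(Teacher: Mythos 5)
There is a genuine gap, and the route you chose is not the one the paper takes. The paper does not rerun the Petersen-specific argument of Theorem~\ref{thm:main} for $S_{10}$; it gives a short reduction to Theorem~\ref{thm:MkrtchyanS10}. If $\tilde{H}_f$ is not cubic, it has vertices of degree $1$ (Remark~\ref{remark:1_3degrees}), and by the extension procedure described just before Definition~\ref{def:htilde} one can attach graphs at those degree-$1$ vertices so as to produce infinitely many pairwise non-isomorphic connected cubic graphs $H'$ with $H'\prec S_{10}$; Theorem~\ref{thm:MkrtchyanS10} allows only one isomorphism type of such a graph, a contradiction. Hence $\tilde{H}_f=H_f$ is cubic, and Theorem~\ref{thm:MkrtchyanS10} applied to $H_f$ gives $H=H_f\simeq S_{10}$. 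Your opening paragraph contains the cubic half of this, but your treatment of the non-cubic case never closes.

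Moreover, the plan you sketch for the non-cubic case cannot be carried out as written, because several structural facts you import from the Petersen graph are false for $S_{10}$. The Sylvester graph is \emph{not} bridgeless: the three edges incident to its central vertex are bridges, so both the claim that it is ``bridgeless (so Lemma~\ref{lem:properties}\emph{(d)} has bite)'' and the later appeal to ``$2$-edge-connectivity / girth properties of $S_{10}$'' fail. It is not arc- or edge-transitive, and it has no perfect matching at all, so the proof of Lemma~\ref{lemma exactly 1 comp} (which rests on the fact that the matchings of $P$ that are edge-cuts are exactly its perfect matchings, whose complements are two $5$-cycles) has no analogue here. Finally, you leave the decisive point---whether $S_4\prec S_{10}$---explicitly undecided. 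It is false: by Lemma~\ref{lem:properties}\emph{(c)}, an $S_4$-colouring would pull the perfect matching of $S_4$ back to a perfect matching of $S_{10}$, which has none. But even knowing this, you would still have to exclude every other non-cubic candidate for $\tilde{H}_f$, which your sketch does not do; the extension-to-cubic argument above disposes of all of them at once.
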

\begin{cor}\label{cor1:S12}
Let $H$ be a connected graph with $H \prec S_{12}$. Then, either $H\simeq S_{10}$ or $H\simeq S_{12}$.
\end{cor}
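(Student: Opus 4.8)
The plan is to generalise the Hakobyan--Mkrtchyan Theorem \ref{thm:MkrtchyanS12} in exactly the way that Theorem \ref{thm:main} generalises Theorem \ref{theorem Vahan Petersen}, reducing the statement for an arbitrary connected $H$ to the already-known cubic case. Let $f$ be an $H$-colouring of $S_{12}$ and form $H_f$ and its splitted image $\tilde H_f$; since $S_{12}$ is cubic, Remark \ref{remark:1_3degrees} applies and every vertex of $\tilde H_f$ has degree $1$ or $3$. If $\tilde H_f$ is cubic, then no splitting took place, so $H_f=\tilde H_f$ has no unused vertex; hence every vertex of $H_f$ lies in $\mathrm{Im}(f_V)$ and therefore satisfies $d_{H_f}(v)=d_H(v)$, so no edge of $H$ leaves $V(H_f)$ and the connectedness of $H$ forces $H=H_f$. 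Theorem \ref{thm:MkrtchyanS12}, applied to the connected cubic graph $H\prec S_{12}$, then yields $H\simeq S_{10}$ or $H\simeq S_{12}$.

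It remains to dispose of the case in which $\tilde H_f$ is not cubic, i.e.\ it has a vertex $v$ of degree $1$ joined by an edge $e=uv$ to a vertex $u$ of degree $3$. Here I would follow the template of the proof of Theorem \ref{thm:main}: first establish the $S_{12}$-analogue of Lemma \ref{lemma exactly 1 comp}, that no bridge of $\tilde H_f$ has both endvertices of degree $3$; then, using that $e$ is a bridge incident to the degree-$1$ vertex $v$, track how the three colours $e,a,b$ incident to $u$ in $\tilde H_f$, and then the colours incident to the two neighbours $w,z$ of $u$, are forced to propagate along the edges of $S_{12}$, as in Claims~A and~B of that proof, so as to conclude $\tilde H_f\simeq S_4$. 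As $S_4$ has a unique degree-$1$ vertex it cannot be obtained by splitting, so $H_f=\tilde H_f\simeq S_4$; regarding $f$ as a surjection onto $E(H_f)$ then gives an $S_4$-colouring of $S_{12}$, that is $S_4\prec S_{12}$. The proof is completed by showing that this is impossible, i.e.\ that $S_4\not\prec S_{12}$.

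The main obstacle is precisely this last ingredient, $S_4\not\prec S_{12}$, together with the adaptation of Lemma \ref{lemma exactly 1 comp} that feeds the preceding step; the source of difficulty is that $S_{12}$, unlike the Petersen graph, is not bridgeless. (It cannot be: if $S_{12}$ were bridgeless the now-proven $S_4$-Conjecture, together with \cite{gmjp}, would give $S_4\prec S_{12}$, and then every graph obtained from $S_4$ by attaching an arbitrary connected graph at its degree-$1$ vertex would also colour $S_{12}$, contradicting the corollary.) In particular the clean fact on which the proof of Lemma \ref{lemma exactly 1 comp} rests --- that a matching which is also an edge-cut must be a perfect matching --- fails for $S_{12}$, so both that lemma's analogue and the colour-propagation step need a more careful local analysis that keeps track of the bridges and small edge-cuts of $S_{12}$; the natural tools are parts \emph{(a)}, \emph{(c)} and \emph{(d)} of Lemma \ref{lem:properties}, applied for instance to the edge-cut $\{a,b\}$ of $S_4$. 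The statement $S_4\not\prec S_{12}$ itself is not a consequence of any result quoted so far --- the $S_4$-Conjecture concerns only bridgeless cubic graphs --- and appears to require exploiting the explicit structure of the twelve-vertex multigraph $S_{12}$: one would list its perfect matchings and its small edge-cuts, use Lemma \ref{lem:properties} to see which edge-sets a hypothetical $S_4$-colouring pulls them back to, and extract a parity or connectivity contradiction. Finally, Corollary \ref{cor1:S10} follows by the identical scheme, with Theorem \ref{thm:MkrtchyanS10} replacing Theorem \ref{thm:MkrtchyanS12} and with the analogous fact $S_4\not\prec S_{10}$.
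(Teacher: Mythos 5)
Your reduction of the cubic case is correct and matches the paper: if $\tilde H_f$ is cubic then $H=H_f=\tilde H_f$ is a connected cubic graph colouring $S_{12}$, and Theorem \ref{thm:MkrtchyanS12} finishes. The gap is the non-cubic case, which you do not actually prove: you propose to redo the entire machinery of Theorem \ref{thm:main} for $S_{12}$ (an analogue of Lemma \ref{lemma exactly 1 comp}, the colour-propagation of Claims A and B, and finally $S_4\not\prec S_{12}$), and you yourself point out that the central mechanism of Lemma \ref{lemma exactly 1 comp} --- a matching that is an edge-cut must be a perfect matching --- fails for $S_{12}$ because $S_{12}$ has bridges. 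None of these three ingredients is supplied, and the last one, $S_4\not\prec S_{12}$, is a nontrivial claim requiring its own case analysis. As written, the argument establishes nothing in the case where $\tilde H_f$ has a vertex of degree $1$.

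What makes this frustrating is that you already hold the key that makes all of that unnecessary, but you deploy it only in a parenthetical aside about bridgelessness. The paper's proof of the non-cubic case is exactly that observation, used head-on: if $\tilde H_f$ is not cubic, it has degree-$1$ vertices, and by the extension procedure described just above Definition \ref{def:htilde} (attach arbitrary connected graphs at the degree-$1$ vertices while preserving the degrees of the vertices in $\mathrm{Im}(f_V)$) one obtains \emph{infinitely many} pairwise non-isomorphic connected cubic graphs $H'$ with $H'\prec S_{12}$. Theorem \ref{thm:MkrtchyanS12} permits only two such graphs, so this is already a contradiction --- no analysis of what $\tilde H_f$ actually looks like, and no statement of the form $S_4\not\prec S_{12}$, is ever needed. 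In particular you do not need to identify $\tilde H_f$ with $S_4$ at all; the mere existence of a degree-$1$ vertex is fatal. The same one-line argument gives Corollary \ref{cor1:S10} from Theorem \ref{thm:MkrtchyanS10}.
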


Both these corollaries are a direct consequence of Theorem \ref{thm:MkrtchyanS10} and Theorem \ref{thm:MkrtchyanS12}. Indeed, let $H$ a connected graph and let $f$ be an $H$-colouring of $S_{10}$ (similarly, $S_{12}$). Suppose $\tilde{H}_f$ is not cubic: then $\tilde{H}_f$ can be extended to infinitely many
connected cubic graphs by the procedure described just above Definition \ref{def:htilde}.  All of them colour $S_{10}$ (respectively, $S_{12}$), a contradiction to Theorem \ref{thm:MkrtchyanS10} (respectively, Theorem \ref{thm:MkrtchyanS12}). Hence, $\tilde{H}_f$ is cubic and the statements respectively follow by Theorem \ref{thm:MkrtchyanS10} and Theorem \ref{thm:MkrtchyanS12}, once again.

As before, once we recall that $S_{12}$ has a perfect matching, two other corollaries follow from Corollary \ref{cor1:S10} and Corollary \ref{cor1:S12}.

\begin{corollary}\label{cor:S10}
If there exists a connected graph $H$ colouring all cubic graphs, then $H\simeq S_{10}$.
\end{corollary}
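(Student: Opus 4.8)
The plan is to deduce Corollary~\ref{cor:S10} directly from Corollary~\ref{cor1:S10}, mirroring the way Corollary~\ref{cor:main} is obtained from Theorem~\ref{thm:main}. Suppose $H$ is a connected graph that colours every cubic graph. Since the Sylvester graph $S_{10}$ is itself a (connected) cubic graph, in particular $H\prec S_{10}$. By Corollary~\ref{cor1:S10} this forces $H\simeq S_{10}$, which is exactly the assertion. So the proof is essentially a one-line specialisation: apply the hypothesis to the single instance $G=S_{10}$ and invoke the structural classification already established.

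The only point that needs a moment's care is making sure the instance $S_{10}$ really lies in the class under consideration. The corollary speaks of \emph{all cubic graphs} (the third row of Table~\ref{Table_cubic_case}), with no bridgelessness or perfect-matching restriction, and $S_{10}$ is a cubic multigraph, so it certainly qualifies; indeed $S_{10}$ even has a perfect matching, but that is more than we need here. Hence no subtlety arises from the class definition.

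First I would state: let $H$ be a connected graph colouring all cubic graphs. Then I would observe that $S_{10}$ is a connected cubic graph, hence $H\prec S_{10}$; by Corollary~\ref{cor1:S10}, $H\simeq S_{10}$, as claimed. I do not anticipate any genuine obstacle: all the real work has been pushed into Corollary~\ref{cor1:S10} (and behind it Theorem~\ref{thm:MkrtchyanS10}), and what remains is the trivial remark that a graph colouring every member of a class in particular colours the distinguished member $S_{10}$ of that class.

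\begin{proof}
Suppose $H$ is a connected graph such that $H\prec G$ for every cubic graph $G$. Since the Sylvester graph $S_{10}$ is itself a connected cubic graph, we have in particular that $H\prec S_{10}$. By Corollary~\ref{cor1:S10}, this implies $H\simeq S_{10}$.
\end{proof}
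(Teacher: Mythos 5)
Your proof is correct and matches the paper's intended argument: the paper derives Corollary~\ref{cor:S10} from Corollary~\ref{cor1:S10} precisely by specialising the hypothesis to the instance $G=S_{10}$, which is a cubic (multi)graph and hence lies in the class. Nothing further is needed.
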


\begin{corollary}\label{cor:S12}
If there exists a connected graph $H$ colouring all cubic graphs with a perfect matching, then either $H\simeq S_{10}$ or $H\simeq S_{12}$.
\end{corollary}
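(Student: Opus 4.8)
The plan is to derive this from Corollary~\ref{cor1:S12} by testing the hypothesised graph $H$ against a single well-chosen input, in complete analogy with the way Corollary~\ref{cor:S10} follows from Corollary~\ref{cor1:S10}.

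First I would observe that $S_{12}$ is itself a cubic multigraph admitting a perfect matching; this is precisely the fact recalled in the sentence immediately preceding the statement, and it is visible from the description of $S_{12}$ in Figure~\ref{figure s12}. Consequently, if $H$ is a connected graph that colours \emph{every} cubic graph having a perfect matching, then $S_{12}$ is among the graphs it colours, so $H\prec S_{12}$. Applying Corollary~\ref{cor1:S12} to this $H$ then yields at once that $H\simeq S_{10}$ or $H\simeq S_{12}$, which is exactly the assertion to be proved.

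I do not anticipate any real difficulty here: all the substance --- in particular the removal of any regularity restriction on $H$ and the classification of the connected graphs colouring $S_{12}$ --- has already been carried out in Corollary~\ref{cor1:S12} (which in turn rests on Theorem~\ref{thm:MkrtchyanS12}). The only points needing care are the routine verifications that $S_{12}$ is cubic, so that it genuinely lies in the class of graphs $H$ is assumed to colour, and that it admits a perfect matching. Note finally that no converse direction is required: the statement is a one-way implication narrowing down the \emph{possible} identity of such an $H$, which is why --- consistently with Table~\ref{Table_cubic_case} --- it is phrased conditionally on the existence of $H$.
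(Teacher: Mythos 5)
Your proposal is correct and is exactly the paper's argument: the paper derives Corollary~\ref{cor:S12} from Corollary~\ref{cor1:S12} by noting that $S_{12}$ is itself a cubic graph with a perfect matching, hence must be coloured by $H$, forcing $H\simeq S_{10}$ or $H\simeq S_{12}$.
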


\section{\emph{H}-colourings in \emph{r}-regular graphs, for \emph{r$>$3}}\label{section regular}

In this section we analyse whether there exists a connected graph $H$ such that every $r$-regular graph $G$ admits an $H$-colouring, for each $r>3$. Clearly, the answer could depend on the class of graphs from where we choose the graph $G$: the bigger the class, the more unlikely it is that the same graph $H$ would colour all of them. 

In Section \ref{section regular multigraphs} we consider the case of $G$ admitting parallel edges. On the other hand, in Section \ref{section regular simple} we restrict our attention to the subclass of simple regular graphs. In the former case, we are able to give a complete negative answer, whilst in the latter one we give a negative answer for $G$ having even degree, and we leave the odd case as an open problem (see Problem \ref{problem odd regular}).

\subsection{\emph{H}-colourings in \emph{r}-regular multigraphs, for \emph{r$>$3}}\label{section regular multigraphs}

In this section we show that, for every even $r>3$, there is no connected graph $H$ such that $H \prec G$ for every $r$-regular multigraph $G$. We note that $H$ is not necessarily simple and can contain parallel edges, that is, $H$ is a graph in the general sense as explained in Section \ref{section intro}.

In each of the multigraphs $S_{4}, S_{6}$ and $S_{12}$, portrayed in Figure \ref{figure s4s6s12}, there is a unique way how one can pair all the vertices of each multigraph such that the vertices in each pair are adjacent. Consequently, these three multigraphs each admit a unique perfect matching up to which parallel edges are chosen, shown in bold in Figure \ref{figure s4s6s12}. Notwithstanding whether we are referring to $S_{4}, S_{6}$ or $S_{12}$, in Section \ref{section regular multigraphs}, we shall refer to this perfect matching in each of these multigraphs by $M$. For every $k\geq 0$, let $S_4+kM$ (similarly, $S_{6}+kM$ or $S_{12}+kM$) be the $(k+3)$-regular multigraph obtained from $S_4$ (respectively, $S_{6}$ or $S_{12}$) after adding $k$ edges parallel to every edge in $M$. When $k=0$, $S_4+0M$, $S_6+0M$ and $S_{12}+0M$ are assumed to be $S_4,S_6$ and $S_{12}$, respectively.

\begin{figure}[h]
      \centering
      \includegraphics[width=0.45\textwidth]{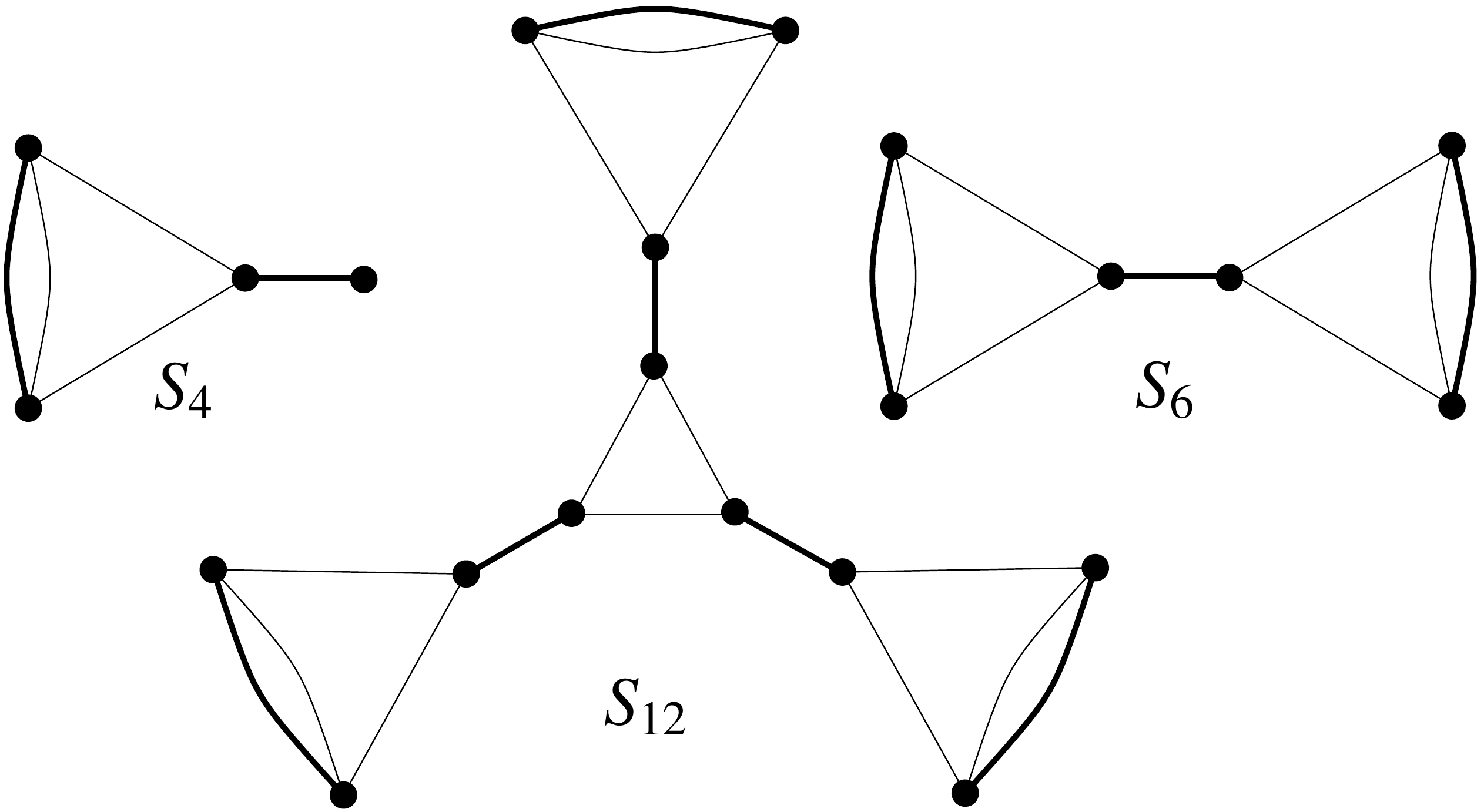}
      \caption{The chosen perfect matching $M$ for the multigraphs $S_{4}, S_{6}$ and $S_{12}$ }
      \label{figure s4s6s12}
\end{figure}

In analogy with the already introduced definition of an \emph{exposed copy of $S_4$} we define in detail what an \emph{exposed copy of $S_4+kM$} is, for some $k \geq 0$. Let $G$ be a multigraph having three vertices of degree $k+3$ and a further vertex of arbitrary degree.  
Denote this set of vertices by $X$. If the induced multisubgraph $G[X]$ is isomorphic to $S_4+kM$, then we say that $G$ \emph{exposes} $S_4+kM$ and that $G[X]$ is an \emph{exposed copy} of $S_{4}+kM$ in $G$.


 

In the next proposition we show that for any $r>3$ there exists an $r$-regular multigraph $G$ that admits only $G$-colourings.

\begin{proposition}\label{prop:S12+kM}
Let $H$ be a connected graph with $H \prec S_{12}+kM$, for some $k \geq 1$. Then, $H\simeq S_{12}+kM$. 
\end{proposition}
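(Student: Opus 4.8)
The plan is to mirror the structure of the proof of Theorem \ref{thm:main}, but with the Petersen graph replaced by $S_{12}+kM$. First I would pass from $H$ to its splitted image $\tilde{H}_f$, observing that since $S_{12}+kM$ is $(k+3)$-regular, every vertex of $\tilde{H}_f$ has degree either $1$ (arising from an unused vertex of $H_f$) or $k+3$ (a vertex in $\textrm{Im}(f_V)$); this is the analogue of Remark \ref{remark:1_3degrees} for degree $k+3$. The goal is to show $\tilde{H}_f$ has no degree-$1$ vertex, for then $\tilde{H}_f=H_f=H$, and one can invoke the already-known rigidity of $S_{12}+kM$ under $H$-colourings by $(k+3)$-regular graphs — that is, since $S_{12}+kM$ exposes $S_4+kM$ (indeed three times) and admits a very restricted structure, a $(k+3)$-regular $H$ with $H\prec S_{12}+kM$ must be isomorphic to $S_{12}+kM$. (If the paper has not yet proved this reduction, I would instead argue directly: use the unique perfect matching $M$ and Lemma \ref{lemma matching pm} / Lemma \ref{lem:properties}(c) to pull back $M$ to a perfect matching of $S_{12}+kM$, and analyse $S_{12}+kM$ minus that matching, which is a disjoint union of bipartite-friendly pieces, to pin down $H$.)

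The heart of the argument is therefore to rule out degree-$1$ vertices in $\tilde{H}_f$. Suppose $v$ has degree $1$ in $\tilde{H}_f$ and is adjacent to $u$, which then has degree $k+3$, via an edge $e=uv$. Since $v\notin\textrm{Im}(f_V)$, every edge of $S_{12}+kM$ coloured $e$ joins two vertices both mapped by $f_V$ to $u$. The key observation I would exploit is that the edge set coloured $e$ forms a matching that is also an edge-cut of $S_{12}+kM$ (by Lemma \ref{lem:properties}(a),(d)), and moreover at each endpoint of such an edge the remaining $k+2$ incident edges are all coloured from the fixed $(k+2)$-element set $\partial_{\tilde{H}_f}u\setminus\{e\}$. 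Tracing this forward: the spokes/edges of $M$ in $S_{12}+kM$ are exactly where the high multiplicity lives, and $S_{12}+kM\setminus(\text{edges coloured }e)$ must be properly edge-colourable with the colours in $\partial_{\tilde{H}_f}u\setminus\{e\}$ on every component touching a $u$-to-$u$ edge. I expect this to collide with a parity or counting obstruction analogous to the odd-cycle obstruction in Lemma \ref{lemma exactly 1 comp}: the "doubled" structure forced around $u$ cannot be realised inside $S_{12}+kM$ because of how its underlying $S_{12}$ skeleton (with three exposed $S_4$'s) is arranged, together with the fact that $S_{12}$ itself is not $3$-edge-colourable in the relevant local sense.

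Concretely, the step-by-step plan is: (1) establish the degree dichotomy for $\tilde{H}_f$; (2) assume a degree-$1$ vertex $v$ with neighbour $u$ and colour $e=uv$; (3) show $f^{-1}(e)$ is a matching and edge-cut, hence analyse which matchings of $S_{12}+kM$ are edge-cuts — here the uniqueness of $M$ (up to parallel-edge choice) and the structure of $S_{12}$ are crucial, and I would argue $f^{-1}(e)$ must consist of copies of $M$-edges or a small controlled family; (4) derive that large parts of $S_{12}+kM$ must be edge-coloured using only the $k+2$ colours around $u$, and run a parity/counting contradiction on an odd substructure of the $S_{12}$ skeleton; (5) conclude $\tilde{H}_f$ is $(k+3)$-regular, so $H=H_f$, and finish via the known classification of $(k+3)$-regular graphs colouring $S_{12}+kM$ (equivalently, by a direct structural argument that such a graph must be $S_{12}+kM$ itself, perhaps passing to the underlying cubic graph $S_{12}$ and Theorem \ref{thm:MkrtchyanS12}).

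The main obstacle I anticipate is step (3)–(4): unlike the Petersen graph, $S_{12}+kM$ is not arc-transitive, so I cannot invoke symmetry to normalise $f^{-1}(e)$ to a single canonical matching, and I will instead need a careful case analysis of the edge-cuts of $S_{12}+kM$ that are also matchings, showing each one forces an odd-cycle (or analogous) contradiction. A secondary technical point is correctly handling the parallel edges: a "matching" in a multigraph still uses at most one edge per vertex, but the colour classes coming from $M+kM$ interact, so I must be careful to distinguish an edge of $\tilde{H}_f$ from the multiple $S_{12}+kM$-edges mapping to it, and track multiplicities through the pull-back. If the clean reduction to Theorem \ref{thm:MkrtchyanS12} via the underlying cubic graph works, it would considerably shorten step (5); I would try that first.
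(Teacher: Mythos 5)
Your plan does not match the paper's proof, and it has genuine gaps that I do not think can be repaired along the lines you propose. The paper does not pass through $\tilde{H}_f$, bridges, or an analogue of Lemma \ref{lemma exactly 1 comp} at all. Its engine is the hypothesis $k\geq 1$: each of the three exposed copies $Z_i$ of $S_4+kM$ in $S_{12}+kM$ contains $k+2\geq 3$ parallel edges $\ell_1,\dots,\ell_{k+2}$ together with $m_1,m_2$, all pairwise adjacent, so their images are $k+4$ distinct edges of $H$; a degree count ($k+4>k+3$) then forces $f_V(v^i)\neq f_V(w^i)$, hence $k+2$ genuinely parallel edges in $H$, and from there $f_V(u^i)$ and $f_V(z^i)$ are pinned down (the latter via pulling back a cycle of $H$ to a $2$-regular subgraph of $G$). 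Two further degree-counting claims show the three image copies are edge-disjoint and glued by a triangle, giving $H\simeq S_{12}+kM$ directly. None of this multiplicity leverage appears in your proposal.

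Concretely, the problems with your route are these. First, in your step (3) you invoke Lemma \ref{lem:properties}(d) to conclude that $f^{-1}(e)$ is an edge-cut, but that statement requires $H_f-X$ to have no isolated vertex, and deleting $e=uv$ isolates precisely the degree-$1$ vertex $v$ you are trying to eliminate; the paper only ever applies (d) when both endpoints have degree $3$. Second, the parity contradiction in Lemma \ref{lemma exactly 1 comp} rests on two facts special to the Petersen graph (every matching that is an edge-cut is a perfect matching, and arc-transitivity lets one normalise it), neither of which holds for $S_{12}+kM$; you acknowledge this but offer no replacement, and it is not clear one exists, since ruling out degree-$1$ vertices is essentially equivalent to the full statement (for $k=0$ the conclusion is false, as $S_4\prec S_{12}$ and $S_{10}\prec S_{12}$, so any successful argument must use $k\geq 1$ in an essential way, which yours never does). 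Third, your step (5) appeals to an ``already-known rigidity'' of $S_{12}+kM$ under colourings by $(k+3)$-regular graphs, or to a reduction to Theorem \ref{thm:MkrtchyanS12} by passing to an underlying cubic graph; no such classification or reduction exists in the paper, and establishing it is essentially the content of the proposition itself. As written, the proposal is a programme with its hardest steps left open rather than a proof.
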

\begin{proof}

\begin{figure}[h]
      \centering
      \includegraphics[scale=0.5]{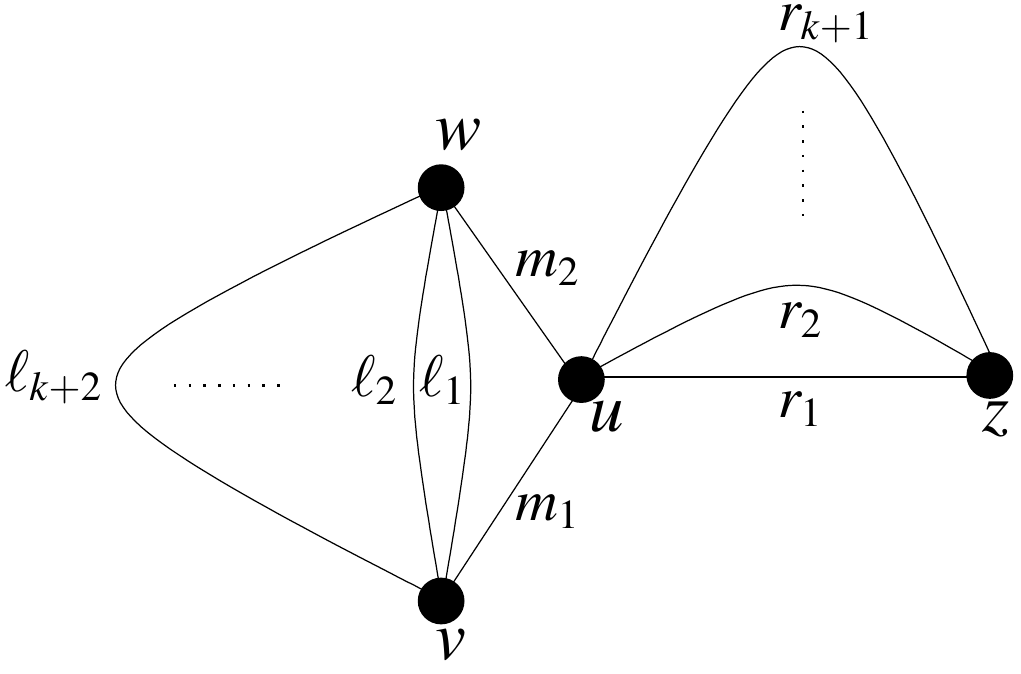}
      \caption{General labelling of an exposed copy of $S_{4}+kM$ in $G$ }
      \label{S_4+kM-labels}
\end{figure}
Let $f$ be an $H$-colouring of $G$, where $G=S_{12}+kM$. Let $z^1, z^2, z^3$ be the three vertices of $G$ which induce a 3-cycle (without parallel edges). Let $Z_{1}, Z_{2}, Z_{3}$ be the three disjoint exposed copies of $S_4+kM$ in $G$, such that $z^{i}\in Z_{i}$, for each $i\in\{1,2,3\}$. Additionally, for each $i \in \{1,2,3\}$, we label the remaining vertices of $Z_i$ by $u^i,v^i$ and $w^i$, where $u^i$ is the unique vertex adjacent to $z^i$ in $Z_i$. The $k+2$ edges with endvertices $v^i$ and $w^i$ are labeled by $\ell_1^i,...,\ell_{k+2}^i$, whilst the $k+1$ edges with endvertices $u^i$ and $z^i$ by $r_1^i,...,r_{k+1}^i$. Finally, the two edges $u^iv^i$ and $u^iw^i$ are denoted by $m^i_1$ and $m^i_2$, respectively. In what follows, when we refer to a generic exposed copy of $S_4+kM$ in $G$ we will omit the superscripts in the labelling of vertices and edges of $G$ (see Figure \ref{S_4+kM-labels}), and in their images under the action of $f_V$.
 
We first show that for each exposed copy of $S_{4}+kM$ in $G$, the following holds. By the definition of $H$-colouring, $f(\ell_1), f(\ell_2), \ldots, f(\ell_{k+2}), f(m_{1}), f(m_{2})$ are $k+4$ distinct edges in $H$ since the edges $\ell_1, \ell_2, \ldots, \ell_{k+2}, m_{1}, m_{2}$ are distinct and pairwise adjacent in $G$.
Hence, $f_V(v)\neq f_V(w)$. Indeed, if by contradiction $f_V(v)$ and $f_V(w)$ are equal, say to $x\in V(H)$, then all the edges $f(\ell_1), f(\ell_2), \ldots, f(\ell_{k+2}), f(m_{1}), f(m_{2})$ are incident to $x$ since $\ell_1, \ell_2, \ldots, \ell_{k+2}, m_{1}, m_{2}$ are exactly all the edges incident to $v$ and $w$ in $G$. These add up to $k+4$ edges, meaning that $d_H(x)=k+4$. However, $d_G(v)=k+3$, that is, $d_H(f_V(v))\neq d_G(v)$, a contradiction. 
It follows that $f(\ell_1), f(\ell_2), \ldots, f(\ell_{k+2})$ are parallel edges in $H$ with endvertices $f_V(v)=v'$ and $f_V(w)=w'$. In particular, $f(m_{1})$ must be incident to $v'$ and $f(m_{2})$ must be incident to $w'$. Moreover, since $m_{1}$ and $m_{2}$ are adjacent edges in $G$, $f(m_{1})$ and $f(m_{2})$ are adjacent edges in $H$. Denote by $u'\in V(H)-\{v',w'\}$ their common endvertex. Consequently, $f_V(u)=u'$, and the edges $f(r_{1}), \ldots, f(r_{k+1})$ are incident to $u'$ in $H$ but not to $v'$ and $w'$.


We now prove that $f_V(z^i)\neq f_V(u^i)$ for each $i \in \{1,2,3\}$. Without loss of generality, suppose that $f_V(z^1)=f_V(u^1)=u'^1$, for contradiction. Since  $r_1^1,...r_{k+1}^1$ are all incident with both $m_1^1$ and $m_2^1$, it must be that $\{f(z^1z^2), f(z^1z^3)\}=\{f(m_1^1),f(m_2^1)\}$, implying that $ f(z^2z^3) \in \{f(\ell_1^1), \ldots, f(\ell^1_{k+2}) \}$.
Let $e$ be an edge in $\{f(\ell_1^1),\ldots,f(\ell^1_{k+2})\}\setminus\{f(z_2z_3)\}$. Then, $\{e,f(m_1^1),f(m_2^1)\}$ induces a cycle in $H$. Hence, the preimage of such a set induces a $2$-regular subgraph in $G$ that contains $z^1z^2$, $z^1z^3$ but not $z^2z^3$, a contradiction.
This follows because if $f'$ is an $H'$-colouring of a graph $G$ and $F$ is a $k$-regular subgraph of $H'$ containing at least one vertex of $\textrm{Im}(f'_V)$, then $f'^{-1}(E(F))$ induces a $k$-regular subgraph of $G$.
%
Consequently, $f_V(z^1)\neq f_V(u^1)$, as required.\\

Hence, up to now we have proved that the induced multisubgraph $H[f_V(V(Z_i))]$ is an exposed copy of $S_4+kM$ in $H$. From now on, we denote $H[f_V(V(Z_i))]$ by $Z_i'$, for each $i \in \{1,2,3\}$.\\

\noindent\textbf{Claim A.} $Z_{1}', Z_{2}', Z_{3}'$ are pairwise edge-disjoint.\\  
\noindent\emph{Proof of Claim A.} Without loss of generality, suppose that $E(Z_{1}')\cap E(Z_{2}')\neq \emptyset$, for contradiction. Since $f_V$ maps the vertices of $Z_{i}$ having degree $k+3$  into vertices of degree $k+3$ in $H$, either  $Z_{1}'=Z_{2}'$, or $H\left[ (E(Z_{1}')\cup E(Z_{2}')\right]\simeq S_6+kM$.
First, assume that $Z_{1}'=Z_{2}'$. In this case, $f_V(z^1)=f_V(z^2)$, and, without loss of generality, we assume that $f(r^1_j)=f(r^2_j)$ for every $j \in \{1,2,\ldots, k+1\}$. Moreover, all the edges $f(z^1z^2)$, $f(z^2z^3)$ and $f(z^1z^3)$ must be pairwise distinct in $H$, and each of them must be incident to $f_V(z^1)$ (which is equal to $f_V(z^2)$). None of the edges $f(z^1z^2)$, $f(z^2z^3)$ and $f(z^1z^3)$ coincide with $f(r^1_j)$ (which is equal to $f(r^2_j)$) for any $j \in \{1,2,\ldots,k+1\}$, since $z^1z^2$, $z^1z^3$ and $z^2z^3$ are all incident to at least one of $r^1_j$ and $r^2_j$ in $G$. However, this means that $d_H(f_V(z^1))>k+3$, a contradiction. Therefore, we must have the other case, that is, $H\left[E(Z_{1})\cup E(Z_{2})\right]\simeq S_6+kM$. However, since $H$ is connected, if $H\left[E(Z_{1})\cup E(Z_{2})\right]\simeq S_6+kM$, then $H\simeq S_6+kM$, meaning that either $Z_{1}'=Z_{3}'$ or $Z_{2}'=Z_{3}'$, a contradiction once again.\,\,\,{\tiny$\blacksquare$}\\

Hence, $H$ contains three edge-disjoint exposed copies of $S_4+kM
$. Let $W=\{z^1z^2,\linebreak z^2z^3,z^1z^3\}$. Observe that $f(z^1z^2)$, $f(z^2z^3)$ and $f(z^1z^3)$ are pairwise distinct and pairwise adjacent in $H$, so that the possibilities for the edge-induced subgraph $H[f(W)]$ by the edges of $W$ in $H$ are: a 3-cycle ($H[f(W)]\simeq C_3$), a single vertex of degree 3, say $z'$, which is adjacent to three distinct neighbours ($H[f(W)]\simeq K_{1,3}$), or a single vertex of degree 3, say $z'$, having two distinct neighbours.\\

\noindent\textbf{Claim B.} The only possibility for $H[f(W)]$ is a 3-cycle, that is, $H[f(W)] \simeq C_3$.\\
\noindent\emph{Proof of Claim B.} Indeed, in both the other cases there are at least two pairs of edges, say, the pair $\{f(z^1z^2), f(z^2z^3)\}$ and the pair $\{f(z^1z^2), f(z^1z^3)\}$, such that the unique endvertex of the edges in each pair is the vertex $z'$ in $H$. This means that $z^1$ and $z^2$ are mapped into $z'$. Since $Z_1'$ and $Z_2'$ are edge-disjoint in $H$ and the edge $f(z^1z^2)$ must be incident to all the edges $\{f(r^i_{1}), \ldots, f(r^i_{k+1}):i=1,2\}$ of $Z_1'$ and $Z_2'$ , the vertex $z'$, which belongs to $\textrm{Im}(f_{V})$, has degree at least $2(k+1)+1$, a contradiction, since $G$ is $(k+3)$-regular and $2k+3>k+3$ for $k \geq 1$. \,\,\,{\tiny$\blacksquare$}\\ 

Moreover, for any $j \in \{1,2,\ldots,k+1\}$, $f(r^{1}_{j})$ must be incident to $f(z^1z^2)$ and $f(z^1z^3)$, $f(r^{2}_{j})$ with $f(z^1z^2)$ and $f(z^2z^3)$, and, $f(r^{3}_{j})$ with $f(z^2z^3)$ and $f(z^1z^3)$. Combining Claim A and Claim B with these last necessary properties we deduce that $H$ is isomorphic to $G$.
\end{proof}

We are now in a position to prove the main result of this section. 

\begin{theorem}\label{theorem poorly matchable graphs}
For each $r>3$, there is no connected graph $H$ colouring all $r$-regular multigraphs admitting a perfect matching.
\end{theorem}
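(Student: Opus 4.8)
The plan is to deduce Theorem~\ref{theorem poorly matchable graphs} directly from Proposition~\ref{prop:S12+kM}. Fix $r>3$ and write $r=k+3$ with $k\geq 1$. The multigraph $G=S_{12}+kM$ is, by construction, an $r$-regular multigraph, and it admits a perfect matching (for instance $M$ itself, or indeed any of the perfect matchings obtained by choosing one edge of each parallel class lying over $M$ together with nothing over the triangle --- more simply, $S_{12}$ has a perfect matching and adding parallel edges preserves it). So $G$ is a legitimate member of the class of $r$-regular multigraphs admitting a perfect matching.

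Now suppose, for contradiction, that there were a connected graph $H$ colouring \emph{every} $r$-regular multigraph admitting a perfect matching. Then in particular $H\prec G=S_{12}+kM$. By Proposition~\ref{prop:S12+kM} this forces $H\simeq S_{12}+kM$. So the only possible candidate is $H=S_{12}+kM$ itself. To rule this out, I would exhibit a second $r$-regular multigraph with a perfect matching that $S_{12}+kM$ cannot colour. The natural choice is $S_4+kM$ (which is $r$-regular on its three degree-$r$ vertices --- wait, here one must be careful: $S_4+kM$ has one vertex of degree $k+1<r$, so it is not $r$-regular). Instead the clean choice is $S_6+kM$: it is $r$-regular, has a perfect matching, and I claim $S_{12}+kM \not\prec S_6+kM$. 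Alternatively, and perhaps more robustly, take $G'$ to be any $r$-regular multigraph with a perfect matching whose chromatic index exceeds that of $S_{12}+kM$, or which contains a configuration incompatible with an $S_{12}+kM$-colouring; by Lemma~\ref{lem:properties}(b) a difference in chromatic index already gives an obstruction. The simplest concrete obstruction: $S_{12}+kM$ has exactly three edge-disjoint exposed copies of $S_4+kM$ and a rigid triangle structure, so a graph such as $S_6+kM$ (two exposed copies of $S_4+kM$ glued along an edge, no triangle of degree-$r$ vertices) cannot receive an $S_{12}+kM$-colouring --- one repeats the local analysis at the start of the proof of Proposition~\ref{prop:S12+kM} to see that each exposed $S_4+kM$ of the target must map to an exposed $S_4+kM$ of $H$, and then counts.

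Concretely I would argue: let $f$ be a hypothetical $(S_{12}+kM)$-colouring of $S_6+kM$. The two exposed copies $Z_1,Z_2$ of $S_4+kM$ in $S_6+kM$ share their two non-triangle degree-$r$ vertices (the vertices playing the role of $z^1,z^2$ are in fact a single adjacent pair here, or rather $S_6+kM$ has no triangle at all). Running the first part of the argument in Proposition~\ref{prop:S12+kM} verbatim, the images $H[f_V(V(Z_i))]$ are exposed copies of $S_4+kM$ in $H=S_{12}+kM$, and the adjacency between $Z_1$ and $Z_2$ inside $S_6+kM$ forces their images to overlap in a way that no pair of the three exposed copies of $S_4+kM$ in $S_{12}+kM$ does (they meet only along the triangle, each in a single vertex), giving the contradiction. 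The bookkeeping is routine once the key structural fact --- every exposed $S_4+kM$ maps to an exposed $S_4+kM$ --- is in hand, and that fact is exactly what the opening paragraphs of the proof of Proposition~\ref{prop:S12+kM} establish.

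The main obstacle I anticipate is making the second half airtight: ruling out $H=S_{12}+kM$ itself requires producing the right ``witness'' multigraph $G'$ and verifying non-colourability cleanly, rather than hand-wavingly. One must check that $G'$ is genuinely $r$-regular and has a perfect matching, and that the local-structure argument really does produce a contradiction (the edge-disjointness claims, Claim~A and Claim~B of Proposition~\ref{prop:S12+kM}, are the delicate points and may need to be re-examined in the new setting). If the $S_6+kM$ witness proves awkward, a fallback is to use chromatic index: one computes $\chi'(S_{12}+kM)$ and then constructs an $r$-regular multigraph with a perfect matching of strictly larger chromatic index (snarks-with-parallel-edges type constructions, or a suitable blow-up), invoking Lemma~\ref{lem:properties}(b). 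Either way, the skeleton is: $G=S_{12}+kM$ pins $H$ down to $S_{12}+kM$ by Proposition~\ref{prop:S12+kM}, and a single further witness graph kills that last candidate.
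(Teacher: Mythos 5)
Your first step coincides with the paper's: take $G=S_{12}+kM$ with $k=r-3\geq 1$ and invoke Proposition~\ref{prop:S12+kM} to pin $H$ down to $S_{12}+kM$. The gap is in the second step: your proposed witness is wrong, because in fact $S_{12}+kM \prec S_6+kM$. An explicit colouring: write $S_6+kM$ on vertices $a^1,b^1,c^1,a^2,b^2,c^2$ (with $k+2$ parallel edges $b^ic^i$, single edges $a^ib^i,a^ic^i$, and $k+1$ parallel edges $a^1a^2$), and map, for both $i=1,2$, $a^ib^i\mapsto u^1v^1$, $a^ic^i\mapsto u^1w^1$, the $b^ic^i$ class bijectively onto the $v^1w^1$ class, and the $a^1a^2$ class bijectively onto the $u^1z^1$ class of $S_{12}+kM$. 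This is proper and satisfies $f(\partial a^i)=\partial u^1$, $f(\partial b^i)=\partial v^1$, $f(\partial c^i)=\partial w^1$. The reason your structural argument does not detect this is that both exposed copies of $S_4+kM$ in $S_6+kM$ are mapped onto the \emph{same} exposed copy in $S_{12}+kM$ (with $f_V(z)=f_V(u)$): the two steps of Proposition~\ref{prop:S12+kM} that forbid exactly this --- the proof that $f_V(z^i)\neq f_V(u^i)$ and the case $Z_1'=Z_2'$ in Claim~A --- both rely essentially on the triangle $z^1z^2z^3$ of the source graph, and $S_6+kM$ has no such triangle. Your fallback (a witness of larger chromatic index) is not carried out and is delicate: one would first need to compute $\chi'(S_{12}+kM)$ and then produce an $r$-regular multigraph with a perfect matching exceeding it, which you do not do.

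The paper closes the argument differently and much more cheaply: since $k\geq 1$, the graph $S_{12}+kM$ contains two \emph{disjoint} perfect matchings (take two parallel copies of $M$), so by Lemma~\ref{lem:properties}(c) any graph admitting an $(S_{12}+kM)$-colouring must itself contain two disjoint perfect matchings. The witness is then any $r$-regular \emph{poorly matchable} multigraph in the sense of Rizzi \cite{Rizzi}: one that has a perfect matching but no two disjoint ones. If you want to repair your write-up, replacing the $S_6+kM$ witness by this perfect-matching obstruction is the missing idea.
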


\begin{proof}
Suppose such a graph $H$ exists. For each fixed $r>3$, choose $G=S_{12}+(r-3)M$, where $M$ is the perfect matching in $S_{12}$ as in Figure \ref{figure s4s6s12}. Since $S_{12}+(r-3)M$ is $r$-regular and $H \prec S_{12}+(r-3)M$, by Proposition \ref{prop:S12+kM} we have that $H$ must be $S_{12}+(r-3)M$.
Now, let $G_r$ be an $r$-regular multigraph admitting an $\left(S_{12}+(r-3)M\right)$-colouring $f$. Since $r>3$, $S_{12}+(r-3)M$ contains (at least) two disjoint perfect matchings, say $M_1$ and $M_2$, and consequently, $f^{-1}(M_1)$ and $f^{-1}(M_2)$ are two disjoint perfect matchings of $G_r$, by Lemma \ref{lem:properties}.
Hence, in order to find an $r$-regular multigraph with a perfect matching and without an $\left(S_{12}+(r-3)M\right )$-colouring, it suffices to exhibit an $r$-regular multigraph admitting a perfect matching but without two disjoint perfect matchings, for every $r>3$. Examples of such multigraphs are constructed in \cite{Rizzi} and called poorly matchable (see also \cite{gm}). The assertion follows.
\end{proof}

In the previous theorem, we consider $G$ as an $r$-regular multigraph admitting a perfect matching. Clearly, the result holds in the larger class of $r$-regular multigraphs.

\begin{theorem}\label{theorem r_regular}
For each $r>3$, there is no  connected graph $H$ colouring all $r$-regular multigraphs.
\end{theorem}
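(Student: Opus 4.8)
The plan is to obtain this as an immediate corollary of Theorem \ref{theorem poorly matchable graphs} via a trivial containment argument, so there is essentially nothing new to prove. The point is simply that the class of all $r$-regular multigraphs contains the class of $r$-regular multigraphs admitting a perfect matching as a subclass: a connected graph $H$ that colours \emph{every} $r$-regular multigraph in particular colours every $r$-regular multigraph that happens to have a perfect matching.

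Concretely, I would argue by contradiction. Fix $r>3$ and suppose a connected graph $H$ satisfies $H\prec G$ for every $r$-regular multigraph $G$. Then $H\prec G$ holds in particular for every $r$-regular multigraph $G$ that admits a perfect matching, which contradicts Theorem \ref{theorem poorly matchable graphs}. Equivalently, and even more directly, one can simply re-use the witnesses exhibited in the proof of Theorem \ref{theorem poorly matchable graphs}: for each $r>3$, the poorly matchable $r$-regular multigraphs of \cite{Rizzi} do admit a perfect matching, hence they already belong to the present (larger) class, and the argument of that proof — namely that $H$ would have to be $S_{12}+(r-3)M$ by Proposition \ref{prop:S12+kM}, and that $\left(S_{12}+(r-3)M\right)$-colourability forces the existence of two disjoint perfect matchings by Lemma \ref{lem:properties}(c) — shows that none of them admits an $H$-colouring.

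There is no genuine obstacle here; the only subtlety worth stating explicitly is the direction of the reduction. Colourability by a fixed $H$ is monotone with respect to enlarging the class of graphs to be coloured in the sense that it becomes \emph{harder}, so the non-existence of a universal colourer for a subclass propagates to the super-class. Hence the assertion follows at once.
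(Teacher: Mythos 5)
Your argument is exactly the paper's: the authors derive Theorem \ref{theorem r_regular} from Theorem \ref{theorem poorly matchable graphs} by the same containment observation, noting that the result for the subclass of $r$-regular multigraphs with a perfect matching immediately propagates to the larger class of all $r$-regular multigraphs. Your proposal is correct and matches the intended proof.
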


\subsection{\emph{H}-colourings in \emph{r}-regular simple graphs, for \emph{r$>$3}}\label{section regular simple}

In this section our aim is to show that, for every even $r>3$, there is no connected graph $H$ such that $H \prec G$ for every simple $r$-regular graph $G$. We remark that $H$ is not necessarily simple and can contain parallel edges, as in the previous section.

Before proceeding, let $\mathcal{K}_{t}^{r}$ denote the family of $r$-regular multigraphs of order $t$, whose vertices are pairwise adjacent. Note that a graph $G$ in $\mathcal{K}_{t}^{r}$ admits a $t$-clique as a spanning (simple) subgraph of $G$. 

\begin{lemma}\label{lemma k2r+1}
Let $H$ be a connected graph. For every $r\geq 1$, if the complete graph $K_{2r+1}$ admits an $H$-colouring, then  $H\in\mathcal{K}_{t}^{2r}$, where $t$ is an odd integer and no vertex of $H$ is unused.
\end{lemma}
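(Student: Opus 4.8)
The plan is to analyse an $H$-colouring $f$ of $K_{2r+1}$ by exploiting that $K_{2r+1}$ is $2r$-regular with an odd number of vertices, so it is a ``densest possible'' object in the sense that all its vertices are pairwise adjacent. First I would fix a vertex $u\in V(K_{2r+1})$ and look at its image $f_V(u)=u'\in V(H)$: since $d_{K_{2r+1}}(u)=2r$ and $f$ is proper with $f(\partial u)=\partial u'$, we get $d_H(u')=2r$ for every $u'\in\textrm{Im}(f_V)$. The key step is to show that \emph{every} two vertices of $H_f$ that lie in $\textrm{Im}(f_V)$ are adjacent in $H$. For this, take distinct $u,w\in V(K_{2r+1})$; they are adjacent in $K_{2r+1}$ (every pair is), so $e=uw$ is an edge and $f(e)$ is an edge of $H$ incident with both $f_V(u)$ and $f_V(w)$; hence if $f_V(u)\neq f_V(w)$ then $f_V(u)f_V(w)\in E(H)$. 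This shows the subgraph of $H$ induced on $\textrm{Im}(f_V)$ has all vertices pairwise adjacent.

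Next I would argue $\textrm{Im}(f_V)=V(H_f)$, i.e.\ there are no unused vertices. Suppose $v\in V(H_f)$ is unused; then $v$ is an endvertex of some edge in $\textrm{Im}(f)$, say $f(e)$ for $e=xy\in E(K_{2r+1})$, and the other endvertex of $f(e)$ must lie in $\textrm{Im}(f_V)$. But every edge of $K_{2r+1}$ joins two vertices, both of which are mapped by $f_V$ into $\textrm{Im}(f_V)$, and $f(e)$ is incident with both $f_V(x)$ and $f_V(y)$; if $f_V(x)=f_V(y)$ this forces a vertex of $H$ of degree at least $2$ within a single fibre in a way I can push to a contradiction using that the $2r$ edges at $x$ get $2r$ distinct colours, all incident to $f_V(x)$, so $d_H(f_V(x))\geq 2r$ and the edge $f(e)$ already accounts for one of them — its other endvertex is then another vertex, contradicting $f_V(x)=f_V(y)$ unless the colour is a parallel edge, which still has two genuinely distinct endvertices. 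The cleaner route: since every edge of $K_{2r+1}$ is $f_V$-mapped onto an edge between two (possibly equal) images, and $K_{2r+1}$ is connected, Lemma~\ref{lem:connected} gives that $H_f$ is connected; combined with ``all of $\textrm{Im}(f_V)$ pairwise adjacent'' and the fact that any vertex of $H_f$ is incident to an edge of $\textrm{Im}(f)$ whose \emph{image-endpoints} are both images of vertices, one deduces the unused vertex $v$ cannot exist, so $V(H_f)=\textrm{Im}(f_V)$ and $H_f$ has all vertices pairwise adjacent and is $2r$-regular — that is, $H_f\in\mathcal K_t^{2r}$ for $t=|V(H_f)|$. Since $H$ is connected and $H_f$ is an induced subgraph of $H$ with $d_{H_f}(v)=2r=d_H(v)$ for all $v$ (all vertices being used), we get $H=H_f$.

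Finally I would pin down the parity of $t$. A graph in $\mathcal K_t^{2r}$ is a $2r$-regular multigraph on $t$ pairwise-adjacent vertices, whose underlying simple graph contains $K_t$; in particular $t-1\leq 2r$, and more to the point we need $t$ odd. The cleanest argument: if $t$ were even, then $H=H_f$ would have a perfect matching $M$ (for instance, $K_t$ with $t$ even has one, and it is a matching of $H$ covering all of $\textrm{Im}(f_V)=V(H)$), so by Lemma~\ref{lemma matching pm} (or Lemma~\ref{lem:properties}\emph{(c)}) $f^{-1}(M)$ would be a perfect matching of $K_{2r+1}$ — impossible, since $K_{2r+1}$ has an odd number of vertices. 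Hence $t$ is odd, which is exactly the assertion. I expect the main obstacle to be the middle step: cleanly ruling out unused vertices and, relatedly, ruling out the possibility that two adjacent vertices of $K_{2r+1}$ share an $f_V$-image, since both hinge on carefully counting incidences at a vertex of $H$ and correctly handling the case where colours are parallel edges of $H$; once those degeneracies are excluded, the rest is bookkeeping with the lemmas already available.
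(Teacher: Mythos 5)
Your opening step (every two distinct vertices of $\textrm{Im}(f_V)$ are adjacent in $H$, each of degree $2r$) and your parity argument for $t$ are both correct and coincide with the paper's. The genuine gap is in the middle step, where you try to rule out unused vertices. Your ``cleaner route'' rests on the claim that every edge of $H_f$ has \emph{both} endvertices in $\textrm{Im}(f_V)$, on the grounds that every edge $xy$ of $K_{2r+1}$ joins two vertices that are mapped by $f_V$. But this only shows that $f(xy)$ is \emph{incident} to $f_V(x)$ and to $f_V(y)$; if $f_V(x)=f_V(y)$ these are the same endvertex, and the other endvertex of $f(xy)$ may perfectly well be unused. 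The paper explicitly warns about this in Section 2 (``an edge $uv$ of $H_f$ must have at least one of its endvertices $u$ and $v$ in $\textrm{Im}(f_V)$, but not necessarily both of them''), and adjacent vertices sharing an $f_V$-image is a real phenomenon (it occurs on the spokes in the $S_4$-colouring of the Petersen graph), so it cannot be waved away. Your first attempt to exclude it (``forces a vertex of degree at least $2$ within a single fibre\dots'') derives no actual contradiction: $f(xy)\in\partial_H f_V(x)$ is entirely consistent with $f_V(x)=f_V(y)$.

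The paper closes this step by reversing the order of your last two arguments. It first proves that $t$ is odd, via exactly your matching argument applied to a perfect matching of the $t$-clique on $\textrm{Im}(f_V)$ --- note that Lemma~\ref{lemma matching pm} only requires a matching covering $\textrm{Im}(f_V)$, not all of $V(H)$, so this does not presuppose the absence of unused vertices. It then uses the oddness of $t$ to kill unused vertices: if $\textrm{Im}(f_V)\neq V(H)$, connectivity gives an edge $xy\in E(H)$ with $x\in\textrm{Im}(f_V)$ and $y\notin\textrm{Im}(f_V)$; take a near-perfect matching $M'$ of the $t$-clique on $\textrm{Im}(f_V)$ missing only $x$ (this is where $t$ odd is used) and set $N=M'\cup\{xy\}$. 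Then $N$ is a matching of $H$ covering all of $\textrm{Im}(f_V)$, so $f^{-1}(N)$ would be a perfect matching of $K_{2r+1}$ --- absurd. If you swap your two final steps and insert this second application of Lemma~\ref{lemma matching pm}, your proof is complete; as written, the exclusion of unused vertices is not established.
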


\begin{proof}
Let $f:E(K_{2r+1})\rightarrow E(H)$ be an $H$-colouring of $K_{2r+1}$, and let $f_{V}$ be the induced map on the vertices of $K_{2r+1}$. Let $v_{1}$ and $v_{2}$ be two distinct vertices in $\textrm{Im}(f_{V})$. Note that these two vertices exist since $|\textrm{Im}(f_V)|=1$ would imply that $K_{2r+1}$ is $2r$-edge-colourable. We claim that $v_{1}v_{2}\in E(H)$. Let $u_{1}$ and $u_{2}$ be two (distinct) vertices in $V(K_{2r+1})$ such that $f_{V}(u_{i})=v_{i}$, for each $i\in\{1,2\}$. Since $u_{1}$ is adjacent to $u_{2}$, $f(u_{1}u_{2})$ is incident to both $v_{1}$ and $v_{2}$, implying that $v_{1}v_{2}\in E(H)$. This proves our claim.

Consequently, there exists an integer $t\in\{2, 3,,\ldots,2r+1\}$ such that $H$ contains a complete graph $K_{t}$ as a subgraph and whose vertex set is $\textrm{Im}(f_{V})\subseteq V(H)$. For simplicity, we shall refer to this subgraph as $K_{t}$. Next, we claim that $t$ must be odd. For, suppose not, and assume that $t$ is even. Let $M$ be a matching of $H$ that is also a perfect matching of $K_{t}$. Consequently, $M$ covers all the vertices of $\textrm{Im}(f_{V})$, since $V(K_{t})=\textrm{Im}(f_{V})$. However, by Lemma \ref{lemma matching pm}, $f^{-1}(M)$ is a perfect matching of $K_{2r+1}$, a contradiction, since $K_{2r+1}$ does not admit a perfect matching. Therefore, $t$ must be odd. 

We next claim that $H$ contains a simple spanning subgraph isomorphic to a $t$-clique, that is, $\textrm{Im}(f_{V})=V(H)$. For, suppose not. Then, there exists an edge $xy\in E(H)$, such that $x\in\textrm{Im}(f_{V})$ and $y\not\in\textrm{Im}(f_{V})$. Let $M'$ be a matching of $H$ with $|M'|=\frac{t-1}{2}$ such that $M'$ covers all the vertices of $\textrm{Im}(f_{V})$ except $x$. Let $N=M'\cup \{xy\}$. The set of edges $N$ is a matching of $H$ which covers all the vertices in $\textrm{Im}(f_{V})$. However, by Lemma \ref{lemma matching pm}, this implies that $f^{-1}(N)$ is a perfect matching of $K_{2r+1}$, a contradiction once again. Therefore, $H$ contains a complete graph of odd order as a simple spanning subgraph.
\end{proof}

Let $r>1$ and let $K_{2r+1}'$ be the complete graph on $2r+1$ vertices minus an edge. Let $J_{2r}$ be the graph obtained by considering $r$ copies of $K_{2r+1}'$ such that all the vertices of degree $2r-1$ in these copies are adjacent to a new vertex $u$, resulting in a $2r$-regular simple graph. We refer to the vertex $u$ as the central vertex of $J_{2r}$, and the $r$ copies of $K_{2r+1}'$ are denoted by $R_{1}, \ldots, R_{r}$.

\begin{lemma}
Let $r>1$ and let $H$ be a graph such that $H\prec J_{2r}$. Then, $H\not\in\mathcal{K}_{t}^{2r}$, for all possible $t$.
\end{lemma}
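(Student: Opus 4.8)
The goal is to show that if $H \prec J_{2r}$ then $H$ cannot be a member of $\mathcal{K}_t^{2r}$ for any odd $t$. I would argue by contradiction: suppose $f$ is an $H$-colouring of $J_{2r}$ with $H \in \mathcal{K}_t^{2r}$, so that $H$ has a $t$-clique on its vertex set as a spanning simple subgraph, $t$ odd, and no unused vertices. First I would observe that each copy $R_i \simeq K_{2r+1}'$ sits inside $J_{2r}$, and the restriction of $f$ to the edges incident only to $R_i$ together with the $r$ spoke-edges from $R_i$ to $u$ behaves almost like an $H$-colouring of $K_{2r+1}$ — the point being that each $R_i$ plus the vertex $u$, with the single missing edge of $K_{2r+1}'$ replaced appropriately, is essentially $K_{2r+1}$. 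More cleanly: I would look at the central vertex $u$ and the structure forced on $H$ by the cliques.

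The key mechanism is the perfect-matching obstruction already used in Lemma \ref{lemma k2r+1}, via Lemma \ref{lemma matching pm}. Since $H \in \mathcal{K}_t^{2r}$ with $t$ odd and every vertex used, pick any vertex $x \in V(H) = \textrm{Im}(f_V)$; the clique $K_t$ minus $x$ has even order $t-1$, so it has a perfect matching $M'$ in $H$, and $M'$ covers all of $\textrm{Im}(f_V)$ except $x$. Now $f_V(u) = x$ for exactly one choice (or I choose $x := f_V(u)$): then $M'$ covers every vertex of $\textrm{Im}(f_V)$ except $f_V(u)$. By Lemma \ref{lemma matching pm} applied with the matching $M'$ — wait, $M'$ does not cover $f_V(u)$, so that lemma does not directly give a perfect matching of $J_{2r}$. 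Instead, $f^{-1}(M')$ is a matching of $J_{2r}$ (by Lemma \ref{lem:properties}(a)) that covers every vertex $w \in V(J_{2r})$ with $f_V(w) \neq f_V(u)$ — i.e. it covers every non-central vertex of $J_{2r}$ that is not mapped to $x$; more precisely it covers every vertex $w$ unless $f_V(w)=x$. I would need to argue that $u$ is the unique vertex with $f_V$-value $x$, or handle the general count. The robust way: $f^{-1}(M')$ saturates $V(J_{2r}) \setminus f_V^{-1}(x)$. Since $|V(J_{2r})| = r(2r+1)+1$ is even (because $r(2r+1)$ is odd times ... hmm, $2r+1$ odd, $r$ arbitrary, so $r(2r+1)+1$ has the parity of $r+1$), I would instead directly count inside one clique.

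The cleanest route, and the one I would actually commit to: inside each $R_i \simeq K_{2r+1}'$, the $2r+1$ vertices of $R_i$ receive $f_V$-values, and the two non-adjacent vertices $a_i, b_i$ of $R_i$ are the ones joined to $u$. Consider $R_i^+ := R_i + u + \{a_i u, b_i u\}$, which after adding the edge $a_i b_i$ would be $K_{2r+2}$; but as it stands $R_i \cup \{a_iu, b_iu\}$ on the vertex set $V(R_i) \cup \{u\}$ is $K_{2r+1}'$ with an extra vertex $u$ of degree $2$. The decisive sub-configuration is simply $K_{2r+1}$: note $R_i$ together with vertex $u$ and the two spokes does contain a $K_{2r+1}$ — namely $V(R_i)$ itself spans $K_{2r+1}'$, which is \emph{not} complete, so that is not it either. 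This is exactly where the real work lies, and I expect it to be the main obstacle: identifying which odd-order complete subgraph of $J_{2r}$ to feed into Lemma \ref{lemma k2r+1}'s parity argument. A $K_{2r+1}'$ is \emph{near}-complete, and the parity/perfect-matching argument has to be adapted: $K_{2r+1}'$ does have a perfect matching? No — it has $2r+1$ vertices, odd, so it has no perfect matching, but a \emph{near-perfect} one missing one vertex. So the plan is: restrict $f$ to $R_i$ to get an $H_i$-colouring of $K_{2r+1}'$ where $H_i = H[\textrm{Im}(f_V|_{R_i})]$; show $H_i$ is a near-complete graph of odd order on an even-or-odd number of vertices by the adjacency argument of Lemma \ref{lemma k2r+1}; then use that $H \in \mathcal{K}_t^{2r}$ forces, via a global matching of $H$ covering $\textrm{Im}(f_V)$ (using oddness of $t$ and a suitable leftover vertex chosen to be $f_V(u)$), a perfect matching of $J_{2r}$ — and $J_{2r}$, having an odd number of vertices in each $R_i$ pendant-attached to a single central $u$, has a matching structure incompatible with a perfect matching when $r$ is even, since deleting $u$ leaves $r$ odd components. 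That last sentence is the actual contradiction: $J_{2r} - u$ has $r$ components each of odd order $2r+1$, so by Tutte's condition $J_{2r}$ has no perfect matching when $r$ is odd, and when $r$ is even $J_{2r}$ itself has odd order $r(2r+1)+1$ — even, hmm. I would settle the parity bookkeeping last; the heart is: produce a matching $N$ of $H$ that covers all of $\textrm{Im}(f_V)$ (which is possible precisely because $t$ is odd and there is an unused... no, \emph{used} leftover structure), then $f^{-1}(N)$ is a perfect matching of $J_{2r}$ by Lemma \ref{lemma matching pm}, contradicting that $J_{2r} - u$ has $r$ odd components (Tutte), which rules out a perfect matching whenever $r > 1$ regardless of the parity of $|V(J_{2r})|$ as long as $r \geq 2$ — since $r$ odd components with $r \geq 2$ and only one vertex removed violates Tutte unless $r \le 1$. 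I would verify this Tutte step carefully, as it is what makes the whole argument go through for all $r > 1$.
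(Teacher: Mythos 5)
There is a genuine gap, and it sits exactly where you hesitate. Your committed route is: find a matching $N$ of $H$ covering all of $\textrm{Im}(f_V)$, apply Lemma \ref{lemma matching pm} to get a perfect matching of $J_{2r}$, and contradict the (correct) Tutte observation that $J_{2r}-u$ has $r\geq 2$ odd components, so $J_{2r}$ has no perfect matching. The problem is that such a matching $N$ need not exist. If $\textrm{Im}(f_V)=V(H)$ and $t$ is odd --- which is not a degenerate corner but the generic situation (it is precisely the configuration that Lemma \ref{lemma k2r+1} shows is forced for the companion graph $K_{2r+1}$, and nothing in the hypotheses of the present lemma excludes it for $J_{2r}$) --- then a matching covering all of $\textrm{Im}(f_V)$ would be a perfect matching of a multigraph on an odd number of vertices, which does not exist. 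You notice this yourself (``which is possible precisely because $t$ is odd and there is an unused\dots no, \emph{used} leftover structure'') but never resolve it, and the fallback of a near-perfect matching of $H$ missing $f_V(u)$ only yields a matching of $J_{2r}$ saturating $V(J_{2r})\setminus f_V^{-1}(f_V(u))$, from which no contradiction is extracted without controlling $|f_V^{-1}(f_V(u))|$. Your other threads (restricting $f$ to a copy $R_i$, hunting for a complete $K_{2r+1}$ inside $J_{2r}$) are abandoned before producing anything usable, as you acknowledge.

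The paper's proof avoids matchings entirely and uses the cut-vertex structure of $J_{2r}$ instead of its matching structure. Take $u_1\in R_1$ and $u_2\in R_2$ adjacent to the central vertex $u$. Since $H\in\mathcal{K}_t^{2r}$, the two distinct edges $f(uu_1)$ and $f(uu_2)$ incident to $f_V(u)$ lie on a common cycle $C$ of $H$ (a digon if they are parallel, a triangle otherwise, using that every two vertices of $H$ are adjacent). The preimage $f^{-1}(E(C))$ is a $2$-regular subgraph of $J_{2r}$, one component of which is a cycle through $u$ containing both $uu_1$ and $uu_2$; such a cycle would require a $u_1$--$u_2$ path in $J_{2r}-u$, impossible since $R_1$ and $R_2$ lie in different components of $J_{2r}-u$. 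If you want to salvage your approach, you would have to close the odd-$t$, no-unused-vertex case separately; the cycle-pullback argument is the natural tool there, at which point it proves the whole lemma on its own.
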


\begin{proof}

Suppose that there exists a graph $H\in\mathcal{K}_{t}^{2r}$ such that $H\prec J_{2r}$, for contradiction. Let $f$ be an $H$-colouring of $J_{2r}$ and let $f_{V}$ be the induced map on the vertices of $J_{2r}$. 

Let $u_1,u_2$ be two vertices of $J_{2r}$ adjacent to $u$ and belonging to $R_1$ and $R_2$, respectively.
Consider a cycle $C$ of $H$ (possibly of length $2$) which contains the two edges $f(uu_1)$ and $f(uu_2)$ incident to $f_V(u)$. The preimage $f^{-1}(E(C))$ is a $2$-regular subgraph of $J_{2r}$  (as in the proof of Proposition \ref{prop:S12+kM}).
Moreover, one of the connected components of $f^{-1}(E(C))$ is a cycle passing through $u$ and containing the two edges $uu_1$ and $uu_2$, a contradiction since $J_{2r}$ does not have such a cycle.
\end{proof}

By the previous two lemmas, there exist no graph which colours both $K_{2r+1}$ and $J_{2r}$, implying our last result.

\begin{theorem}\label{theorem G simple even degree}
For every $r>1$, there is no connected graph $H$ colouring all $2r$-regular simple graphs.
\end{theorem}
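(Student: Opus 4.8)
The plan is to combine the two preceding lemmas in the obvious way and argue by contradiction. Suppose $H$ is a connected graph which colours every $2r$-regular simple graph, for some fixed $r>1$. Then in particular $H\prec K_{2r+1}$, since $K_{2r+1}$ is a $2r$-regular simple graph. By Lemma \ref{lemma k2r+1} this forces $H\in\mathcal{K}_{t}^{2r}$ for some odd integer $t$ (and moreover $H$ has no unused vertex, though we will not need that). On the other hand, $H$ must also colour $J_{2r}$, which is a $2r$-regular simple graph by construction; so $H\prec J_{2r}$, and Lemma \ref{lemma k2r+1 second}---the second of the two lemmas, which asserts $H\notin\mathcal{K}_{t}^{2r}$ for all $t$---gives the contradiction. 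Hence no such $H$ exists.

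The only real content to check is that the two graphs $K_{2r+1}$ and $J_{2r}$ are indeed $2r$-regular simple graphs, so that a universal colouring graph $H$ would have to colour both; but this is immediate from their definitions, since $r>1$ guarantees $K_{2r+1}'=K_{2r+1}$ minus an edge still has a vertex of degree $2r-1$ to attach to the central vertex $u$, and all the degrees work out. So there is essentially no obstacle here: the theorem is a short corollary of the two lemmas, and the genuine work was already done in establishing them (the structural restriction on $H$ forced by $K_{2r+1}$, and the cycle-lifting argument ruling out such an $H$ on $J_{2r}$).

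One point worth spelling out explicitly in the write-up: the theorem is stated for $2r$-regular \emph{simple} graphs with $r>1$, i.e. for even degrees at least $4$, and this is exactly the range in which both $K_{2r+1}$ and $J_{2r}$ are available as simple $2r$-regular witnesses. I would therefore phrase the final argument as: ``Let $r>1$. If a connected graph $H$ coloured every $2r$-regular simple graph, then $H\prec K_{2r+1}$ and $H\prec J_{2r}$; the former forces $H\in\mathcal{K}_{t}^{2r}$ for some (odd) $t$ by Lemma \ref{lemma k2r+1}, while the latter forbids $H\in\mathcal{K}_{t}^{2r}$ for every $t$ by the preceding lemma---a contradiction.'' That is the whole proof.
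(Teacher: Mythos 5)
Your proposal is correct and is essentially identical to the paper's own proof: the paper likewise observes that $K_{2r+1}$ and $J_{2r}$ are both $2r$-regular simple graphs and concludes directly from the two preceding lemmas that no connected graph can colour both. (Only a cosmetic note: the second lemma carries no label in the paper, so you would need to cite it as ``the preceding lemma'' rather than by a \verb|\ref| key.)
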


Finally, as we have already remarked, we suggest the following open problem in order to have a complete answer to the general question asked in Section \ref{section regular}, that is,  whether there exists a connected graph $H$ such that for every $r$-regular graph $G$, $G$ admits an $H$-colouring, for each $r>3$. In order to fully answer this question, by Theorem \ref{theorem r_regular} and Theorem \ref{theorem G simple even degree}, it suffices to consider the following.

\begin{problem}\label{problem odd regular}
Let $r>1$ be odd. Determine whether there exists a connected graph $H$ colouring all $r$-regular simple graphs.
\end{problem}

The question whether there exists a graph $H$ in some class that colours any graph $G$ in some other class has been addressed in the cubic case considering various classes for both $H$ and $G$, for example, the class of bridgeless cubic graphs or the class of cubic graphs having a perfect matching. The same could be done in the case when $G$ is assumed to be an $r$-graph. Let us recall that an $r$-graph is a connected $r$-regular graph such that $|\partial{X}|\geq r$ for every odd subset $X$ of the vertex set. 
We thus suggest the following.

\begin{problem}
Let $r>3$. Determine whether there exists an $r$-graph $H$ colouring all (simple) $r$-graphs.
\end{problem}


\section{Acknowledgments}
The authors sincerely thank the anonymous referee for the precious and detailed suggestions, which led to significant improvements in the paper.

The first two authors were partially supported by the group GNSAGA of INdAM.

\end{document}